\tikzset{commutative diagrams/.cd,arrow style=tikz,diagrams={>=stealth'}}
\def\@map#1#2[#3]{\mbox{$#1 \colon\thinspace #2 \to #3$}}
\def\map#1#2{\@ifnextchar [{\@map{#1}{#2}}{\@map{#1}{#2}[#2]}}
\newcommand{\altmap}[4][\lhra]{\mbox{$#2 \colon\thinspace #3 #1 #4$}}
\renewcommand{\to}{\longrightarrow}
\DeclareRobustCommand*{\up}[1]{\textsu{#1}}
\newcommand{\id}{\ensuremath{\operatorname{\text{Id}}}}
\renewcommand{\ker}[1]{\ensuremath{\operatorname{\text{Ker}}({#1})}}
\newcommand{\im}[1]{\ensuremath{\operatorname{\text{Im}}\left({#1}\right)}}
\newcommand{\ab}{\textsuperscript{Ab}}
\newcommand{\brak}[1]{\ensuremath{\left\{ #1 \right\}}}
\newcommand{\ang}[1]{\ensuremath{\left\langle #1\right\rangle}}
\newcommand{\set}[2]{\ensuremath{\left\{#1 \,\mid\, #2\right\}}}
\newcommand{\setang}[2]{\ensuremath{\ang{#1 \,\mid\, #2}}}
\newcommand{\setangr}[2]{\ensuremath{\ang{#1 \,\left\lvert \, #2 \right.}}}
\newcommand{\setangl}[2]{\ensuremath{\ang{\left. #1 \,\right\rvert \, #2}}}
\newcommand{\lhra}{\lhook\joinrel\longrightarrow}
\newcommand{\setr}[2]{\ensuremath{\brak{#1 \,\left\lvert \, #2 \right.}}}
\newcommand{\setl}[2]{\ensuremath{\brak{\left. #1 \,\right\rvert \, #2}}}
\newcommand{\abs}[1]{\lvert#1\rvert}
\newcommand{\req}[1]{equation~(\protect\ref{eq:#1})}
\newcommand{\reqref}[1]{(\protect\ref{eq:#1})}
\newcommand{\reth}[1]{Theorem~\protect\ref{th:#1}}
\newcommand{\relem}[1]{Lemma~\protect\ref{lem:#1}}
\newcommand{\reco}[1]{Corollary~\protect\ref{cor:#1}}
\newcommand{\repr}[1]{Proposition~\protect\ref{prop:#1}}
\newcommand{\rerem}[1]{Remark~\protect\ref{rem:#1}}
\newcommand{\rerems}[1]{Remarks~\protect\ref{rems:#1}}
\newcommand{\reexs}[1]{Examples~\protect\ref{exs:#1}}
\newcommand{\resec}[1]{Section~\protect\ref{sec:#1}}
\newtheorem{teo}{Theorem}[section]
\newtheorem{prop}[teo]{Proposition}
\newtheorem{lem}[teo]{Lemma}
\theoremstyle{remark}
\newtheorem{remark}[teo]{Remark}
\newtheorem{remarks}[teo]{Remarks}
\numberwithin{equation}{section}
\newcommand{\blankbox}[2]{%
  \parbox{\columnwidth}{\centering
    \setlength{\fboxsep}{0pt}%
    \fbox{\raisebox{0pt}[#2]{\hspace{#1}}}%
  }%
}
\renewcommand{\labelenumi}{(\theenumi)}
\renewcommand{\labelenumii}{(\theenumii)}
\renewcommand{\labelenumiii}{(\theenumiii)}
\renewcommand{\p@enumi}{}
\renewcommand{\p@enumii}{}
\renewcommand{\p@enumiii}{}
\def\@enum@{\list{\csname label\@enumctr\endcsname}%
          {\usecounter{\@enumctr}\def\makelabel##1{
\normalfont\ignorespaces\emph{{##1}~}}
\setlength{\labelsep}{3pt}
\setlength{\parsep}{2pt}
\setlength{\leftmargin}{0pt}
\setlength{\labelwidth}{0pt}
\setlength{\listparindent}{\parindent}
\setlength{\itemsep}{0pt}
\setlength{\itemindent}{0pt}
\topsep=3pt plus 1pt minus 1 pt}}
\newcommand{\comj}[1]{\noindent\textcolor{DarkOrchid3}{\textbf{!!!J!!!~#1}}}
\newcommand{\comd}[1]{\noindent\textcolor{blue}{\textbf{!!!D!!!~#1}}}
\newcommand{\comc}[1]{\noindent\textcolor{red}{\textbf{!!!C!!!~#1}}}
\begin{document}

\title[Splitting of Generalisations of the Fadell-Neuwirth Short exact sequence]{The splitting of Generalisations of the Fadell-Neuwirth Short exact sequence}

\author[D.~L.~Gon\c{c}alves]{Daciberg Lima Gon\c{c}alves}
\address{Departamento de Matem\'atica - IME-USP, Rua~do~Mat\~ao~1010,
CEP:~05508-090 - S\~ao Paulo - SP - Brazil}
\email{dlgoncal@ime.usp.br}

\author[J.~Guaschi]{John Guaschi}
\address{Normandie Univ., UNICAEN, CNRS, LMNO, 14000 Caen, France}
\email{john.guaschi@unicaen.fr}


\author[C.~M.~Pereiro]{Carolina de Miranda e Pereiro}
\address{Universidade Federal do Esp\'{i}rito Santo, UFES, Departamento de Matem\'{a}tica, 29075-910, Vit\'{o}ria, Esp\'{i}rito Santo, Brazil}
\email{carolina.pereiro@ufes.br}



%
\date{\today}
%


\begin{abstract} 
\noindent We study some generalisations to mixed braid groups of the Fadell-Neuwirth short exact sequence and the possible splitting of this sequence. In certain cases, we determine conditions under which the projection from the mixed braid group $B_{n_{1},\ldots,n_{k}}(M)$ to $B_{n_{1},\ldots, n_{k-q}}(M)$ admits a section, where $M$ is either the torus or the Klein bottle, $n_{1}, \ldots, n_{k},q \in \mathbb{N}$, and $1\leq q \leq k-1$. For $k\geq 2$ and $q=k-1$, we show that this projection admits a section if and only if $n_{1}$ divides $n_{i}$ for all $i=2,\ldots, k$. We present some partial conclusions in the case $k\geq 3$ and $q=1$. To obtain our results, we compute and make use of suitable mixed braid groups of $M$, as well as certain key quotients that play a central r\^{o}le in our analysis.
\end{abstract}

\maketitle

\section{Introduction}

The braid groups of the disc, also known as Artin braid groups, were introduced by E.~Artin~\cite{Ar0}. If $n\in \mathbb{N}$, the $n$-string Artin braid group, denoted by $B_{n}$, is generated by the elements $\sigma_{1},\ldots,\sigma_{n-1}$, illustrated in~Figure~\ref{fig:artin}, and known as the Artin generators of $B_n$, that are subject to the Artin relations:
\begin{equation*}
\begin{cases}
\sigma_{i}\sigma_{i+1}\sigma_{i}=\sigma_{i+1}\sigma_{i}\sigma_{i+1}& \text{for all $1\leq i \leq n-2$}\\ 
\sigma_{i}\sigma_{j}=\sigma_{j}\sigma_{i}&\text{if $1\leq i,j\leq n-1$ and $\lvert i-j\rvert \geq2$.}
\end{cases}
\end{equation*}
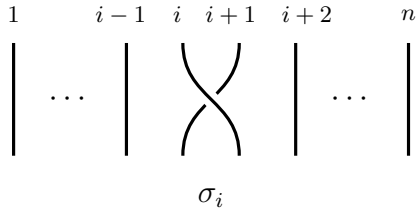
\begin{figure}[h]
\hfill
\begin{tikzpicture}[scale=0.75, very thick]

\foreach \k in {5}
{\draw (\k,3) .. controls (\k,2) and (\k-1,2) .. (\k-1,1);}

\foreach \k in {4}
{\draw[white,line width=6pt] (\k,3) .. controls (\k,2) and (\k+1,2) .. (\k+1,1);
\draw (\k,3) .. controls (\k,2) and (\k+1,2) .. (\k+1,1);}

%

\foreach \k in {1,3,6,8}
{\draw (\k,1)--(\k,3);}

\foreach \k in {2,7}
{\node at (\k,2) {$\cdots$};}

\foreach \k in {1}
{\node at (\k,3.5) {{\scriptsize$1$}};
\node at (\k+1.9,3.5) {{\scriptsize$i-1$}};
\node at (\k+2.9,3.52) {{\scriptsize$i$}};
\node at (\k+3.85,3.5) {{\scriptsize$i+1$}};
\node at (\k+5.2,3.5) {{\scriptsize$i+2$}};
\node at (\k+7,3.5) {{\scriptsize$n$}};}

\node at (4.5,0.25) {$\sigma_{i}$};

\end{tikzpicture}
\hspace*{\fill}
\caption{The Artin generator $\sigma_{i}$}\label{fig:artin}
\end{figure}

These groups were later generalised by Fox and Neuwirth using configuration spaces as follows~\cite{FoN}. Let $M$ be a connected surface, and let $n\in \mathbb{N}$. The \textit{$n$\textsuperscript{th} configuration space of $M$}, denoted by $F_{n}(M)$, is defined by:
\begin{equation*}
F_{n}(M)=\left\{(x_{1},\ldots,x_{n})\,:\; \text{$x_{i}\in M$, and $x_{i}\neq x_{j}$ if $i\neq j$, $i,j=1,\ldots,n$}\right\}.
\end{equation*}
The \textit{$n$-string pure braid group $P_{n}(M)$} of $M$ is defined by $P_{n}(M)=\pi_{1}(F_{n}(M))$. The symmetric group $S_{n}$ on $n$ letters acts freely on $F_{n}(M)$ by permuting coordinates, and the \textit{$n$-string (full) braid group $B_{n}(M)$} of $M$ is defined by $B_{n}(M)=\pi_{1}(D_{n}(M))$, where $D_{n}(M)=F_{n}(M)/S_{n}$. This gives rise to the following short exact sequence:
\begin{equation}\label{eq:seq1} 
1\longrightarrow P_{n}(M)\longrightarrow B_{n}(M)\longrightarrow S_{n}\longrightarrow 1.
\end{equation}
If $M$ is the $2$-disc (or the plane $\mathbb{R}^2$), then $B_n(M)$ (resp.\ $P_n(M)$) is isomorphic to $B_n$ (resp.\ to the Artin pure braid group $P_n$). If $M$ is a compact surface without boundary, by the work of Fadell and Neuwirth~\cite{FN}, the projection $p\colon\thinspace F_{n+m}(M)\longrightarrow F_{n}(M)$ defined by $p(x_{1},\ldots,x_{n+m})=(x_{1},\ldots,x_{n})$ for all $(x_{1},\ldots,x_{n+m})\in F_{n+m}(M)$ is a locally-trivial fibration whose fibre may be identified with $F_{m}(M\setminus\left\{x_{1}, \ldots,x_{n}\right\})$. Taking the associated long exact sequence in homotopy of this fibration, we obtain the \emph{Fadell-Neuwirth short exact sequence}:\begin{equation}\label{eq:seqFN} 
1\longrightarrow P_{m}(M\setminus\left\{x_{1},\ldots,x_{n}\right\})\longrightarrow P_{n+m}(M)\stackrel{p_{\ast}}\longrightarrow P_{n}(M) \longrightarrow 1,
\end{equation}
where $p_{\ast}$ is the homomorphism induced by $p$, $m\geq 1$, and $n\geq 3$ if $M=\mathbb{S}^{2}$~\cite{Fa,FVB}, $n\geq 2$ if $M$ is the real projective plane $\mathbb RP^{2}$~\cite{VB}, and $n\geq 1$ otherwise~\cite{FN}. Geometrically, the homomorphism $p_{\ast}$ may be interpreted as the map that forgets the last $m$ strings of a pure braid with $m+n$ strings. If $M$ has boundary,  $p$ is not a fibration, but the short exact sequence~(\ref{eq:seqFN}) nevertheless exists~(see for example the proof of~\cite[Theorem~2(a)]{GG4}). The sequence~(\ref{eq:seqFN}) is an important tool in the study of surface braid groups. Its use leads to presentations of the corresponding pure braid groups, and it allows us to compute their centre and their possible torsion elements, and to analyse their residual properties. In the case of the Artin pure braid groups,~(\ref{eq:seqFN}) splits, and gives rise to a decomposition of $P_n$ as a repeated semi-direct product of free groups, known as the Artin `combing' operation~\cite{Ar1}. This is the principal result of Artin's classical theory of braid groups, from which one may obtain normal forms and a solution to the word problem in $B_n$. One of the principal problems regarding~(\ref{eq:seqFN}), known as the \emph{splitting problem}, is to determine for which surfaces and which values of $n$ and $m$ the sequence splits~\cite{B1,FN,FVB,GG,GG1,GG4,VB}. If~(\ref{eq:seqFN}) splits for all $n, m \in\mathbb N$, the group $P_{n+m}(M)$ may be decomposed as an iterated semi-direct product, which aids in the study of its properties. In contrast with the case of compact surfaces without boundary of higher genus, in the cases where $M$ is the $2$-torus $\mathbb{T}$ or the Klein bottle $\mathbb{K}$, the fibration $p$ admits a cross-section arising from the existence of a non-vanishing vector field on $M$ for all $n,m\in \mathbb{N}$~\cite{FN}, which gives rise to an algebraic section for $p_{\ast}$. Recall that if the fibre of the fibration is an Eilenberg-MacLane space, which is the case here, then the existence of a section for $p_{\ast}$ is equivalent to that of a cross-section for $p$~\cite{Ba,GGagt,Wh}.

With respect to the splitting problem, it is natural to study the corresponding full braid groups. Although the short exact sequence~(\ref{eq:seqFN}) does not generalise directly to $B_{n+m}(M)$ directly, the projection $p_{\ast}$ extends to an intermediate subgroup of $B_{n+m}(M)$, namely the \emph{mixed braid group} $B_{n,m}(M)$ that is defined by $B_{n,m}(M)= \pi_{1}(D_{n,m}(M))$, where $D_{n,m}(M)=F_{n+m}(M)/(S_{n}\times S_{m})$. In this case, if $M$ is a compact surface without boundary, the map $p \colon\thinspace F_{n+m}(M)/(S_{n}\times S_{m}) \to F_{n}(M)/S_{n}$ given by forgetting the last $m$ coordinates is a fibration whose fibre may be identified with $F_{m}(M\setminus\left\{x_{1},\ldots ,x_{n}\right\})/S_{n}$. As in the case of the pure braid groups, applying the associated long exact sequence in homotopy to this fibration, we obtain the \emph{generalised Fadell-Neuwirth short exact sequence:}
\begin{equation}\label{eq:gen.seqFN} 
1\longrightarrow B_{m}(M\setminus\left\{x_{1},\ldots,x_{n}\right\})\longrightarrow B_{n,m}(M) \stackrel{p_{\ast}}{\longrightarrow} B_{n}(M)\longrightarrow 1,
\end{equation}
where $p_{\ast}$ is the homomorphism induced by $p$, $m\geq 1$, and $n\geq 3$ if $M=\mathbb{S}^{2}$, $n\geq 2$ if $M=\mathbb RP^{2}$, and $n\geq 1$ otherwise. Once more, the short exact sequence~(\ref{eq:gen.seqFN}) exists even if $M$ has boundary. We are interested in deciding whether this sequence splits. Once more, the existence of a section for $p_{\ast}$ is equivalent to that of a cross-section for $p$. In the case of the Artin braid groups, it is easy to see that~(\ref{eq:gen.seqFN}) splits for all $n$ and $m$. The case where $M=\mathbb{S}^{2}$ was originally studied in~\cite{GG0}, with further results being obtained in~\cite{CS}, and the case where $M=\mathbb{R}P^{2}$ has been analysed in~\cite{Ma}. The case of orientable surfaces has been studied recently in~\cite{Ma2}. In this paper, we solve the splitting problem with respect to~(\ref{eq:gen.seqFN}) for the cases where $M=\mathbb{T}$ or $M=\mathbb{K}$, the precise statement being as follows.

\begin{teo}\label{th:FNsplits}
Let $M$ be the $2$-torus or the Klein bottle. Then the generalised Fadell-Neuwirth short exact sequence~(\ref{eq:gen.seqFN}) splits if and only if $n$ divides $m$.
\end{teo}

Observe that \reth{FNsplits} implies the result of~\cite[Theorem~1]{Ma2} in the case where $M=\mathbb{T}$. 

To prove that the condition of the statement of Theorem~\ref{th:FNsplits} is sufficient, we make use of the existence of a non-vanishing vector field on $\mathbb{T}$ and $\mathbb{K}$ to construct a geometric section on the level of the associated configuration spaces, which implies the existence of an algebraic section for~(\ref{eq:gen.seqFN}). The proof of the converse is algebraic in nature, and for this we determine presentations of the groups appearing in~(\ref{eq:gen.seqFN}) as well as some of their quotients.

The mixed braid groups defined above may be generalised to any number of factors. To do so, for $k,n_{1},\ldots,n_{k}\in \mathbb{N}$, let: 
\begin{equation*}
B_{n_1,\ldots,n_k}(M)=\pi_{1}(D_{n_1,\ldots,n_{k}}(M)),
\end{equation*}
where $D_{n_1,\ldots,n_{k}}(M)=F_{n_1+\cdots +n_{k}}(M)/(S_{n_1}\times\cdots\times S_{n_{k}})$. One may obtain short exact sequences similar to that of~(\ref{eq:gen.seqFN}) by forgetting one, or several blocks, of strings. More precisely, if $q=1,\ldots, k-1$, then there exists a short exact sequence:
\begin{equation}\label{eq:p} 
1 \to \ker{p_{\ast}} \to B_{n_1,\ldots,n_{k}}(M) \stackrel{p_{\ast}}{\longrightarrow} B_{n_1,\ldots, n_{k-q}}(M) \to 1,
\end{equation}
where $p_{\ast}$ is the homomorphism induced by the map $p\colon\thinspace D_{n_1,\ldots,n_{k}}(M) \to D_{n_1,\ldots,n_{k-q}}(M)$ that forgets the last $n_{k-q+1}+\cdots +n_{k}$ points, and where $\ker{p_{\ast}}$ may be identified with the group $B_{n_{k-q+1},\ldots,n_{k}}(M\setminus \{x_{1},\ldots, x_{n_{1}+\cdots+n_{k-q}}\})$. Once more, our aim is to decide when~(\ref{eq:p}) splits. As in the situation of Theorem~\ref{th:FNsplits}, in this paper we restrict our attention to the cases where $M=\mathbb{T}$ or $M=\mathbb{K}$, in which case the existence of a splitting for~(\ref{eq:p}) is equivalent to that of a geometric section on the level of the corresponding configuration spaces. 
As a first step in the resolution of this splitting problem, we analyse the extreme values of $q$, namely $q=1$ and $q=k-1$. In the latter case, we solve the problem completely (and the answer is similar to that of the case $k=2$ of Theorem~\ref{th:FNsplits}), while in the former case, we give a partial answer.


\begin{teo}\label{th:split1}
Let $M$ be the $2$-torus or the Klein bottle. If $q=k-1$, with the above notation, the short exact sequence~(\ref{eq:p}) splits if and only if $n_1$ divides $n_i$ for all $i=2,\ldots,k$.
\end{teo}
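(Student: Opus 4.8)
The plan is to handle the two implications separately, using the already-established case $k=2$, namely \reth{FNsplits}, as the essential input for the ``only if'' direction, and a direct geometric construction for the ``if'' direction. When $q=k-1$ the target of $p_\ast$ in \reqref{p} is $B_{n_1}(M)$, so that \reqref{p} becomes
\begin{equation*}
1 \to B_{n_2,\ldots,n_k}(M\setminus\{x_1,\ldots,x_{n_1}\}) \to B_{n_1,\ldots,n_k}(M) \stackrel{p_\ast}{\longrightarrow} B_{n_1}(M) \to 1,
\end{equation*}
and, as noted before the statement, a splitting of this sequence is equivalent to the existence of a cross-section of the corresponding fibration $p$ on the level of configuration spaces.

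First I would show that the divisibility condition is sufficient. Assume that $n_1$ divides $n_i$ for every $i=2,\ldots,k$, and set $c_i=n_i/n_1$. Fix a non-vanishing vector field $V$ on $M$ (which exists since $\chi(\mathbb{T})=\chi(\mathbb{K})=0$) together with a Riemannian metric. Given a configuration $\{p_1,\ldots,p_{n_1}\}$ representing a point of $D_{n_1}(M)$, I would flow each $p_j$ a short distance along $V$ to produce, for each block index $i\geq 2$, exactly $c_i$ nearby points, choosing disjoint ranges of flow-times for distinct values of $i$ and a perturbation scale that depends continuously on the minimal pairwise distance between the $p_j$. This yields $n_1c_i=n_i$ distinct points for block $i$, pairwise disjoint from the other blocks and from block $1$. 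Since every base point is treated in exactly the same manner, the construction does not depend on the labelling of the $p_j$, so it descends to a continuous map $s\colon D_{n_1}(M)\to D_{n_1,\ldots,n_k}(M)$; as forgetting the added blocks recovers the original configuration, $s$ is a cross-section of $p$, which yields the desired algebraic section of $p_\ast$. This is the same clustering argument used to prove sufficiency in \reth{FNsplits}, now applied simultaneously to the $k-1$ additional blocks.

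For the converse, suppose that \reqref{p} splits, let $s\colon B_{n_1}(M)\to B_{n_1,\ldots,n_k}(M)$ be an algebraic section of $p_\ast$, and fix $i\in\{2,\ldots,k\}$. The map that forgets every block except the first and the $i$\textsuperscript{th} induces a homomorphism $f_i\colon B_{n_1,\ldots,n_k}(M)\to B_{n_1,n_i}(M)$, and I write $\bar p_\ast\colon B_{n_1,n_i}(M)\to B_{n_1}(M)$ for the generalised Fadell-Neuwirth projection \reqref{gen.seqFN} with $(n,m)=(n_1,n_i)$. At the level of configuration spaces, the composite of forgetting all blocks except $1$ and $i$ followed by forgetting block $i$ agrees with forgetting all blocks except block $1$, whence $\bar p_\ast\circ f_i=p_\ast$. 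It follows that $f_i\circ s$ is a section of $\bar p_\ast$, so the sequence \reqref{gen.seqFN} for the pair $(n_1,n_i)$ splits, and \reth{FNsplits} then forces $n_1$ to divide $n_i$. As $i$ was arbitrary, this establishes that $n_1$ divides $n_i$ for all $i=2,\ldots,k$.

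The conceptual heart is this reduction to the case $k=2$, which is formal once \reth{FNsplits} is available; accordingly I expect the points requiring care to be, on the one hand, the continuous and label-independent choice of perturbation scale in the sufficiency construction guaranteeing that the $n_1+\cdots+n_k$ resulting points remain distinct and correctly partitioned into blocks, and on the other hand the verification that the forgetting maps $f_i$ are well defined and that the relation $\bar p_\ast\circ f_i=p_\ast$ holds at the level of groups. Neither is genuinely difficult, since the forgetting maps involve only configuration spaces of $M$ itself and no punctured surfaces; the substantive work has already been carried out in the proof of \reth{FNsplits}.
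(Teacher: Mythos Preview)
Your proposal is correct and follows essentially the same approach as the paper: the sufficiency is established via the same vector-field clustering construction (the paper formalises this through the maps $s_{n,l}$ of~\reqref{sec1}--\reqref{sec2}), and the necessity is obtained by composing the assumed section with the forgetful homomorphism $B_{n_1,\ldots,n_k}(M)\to B_{n_1,n_i}(M)$ to reduce to \reth{FNsplits}, exactly as you describe.
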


The case $q=1$ is more subtle. We currently have the following partial result.

\begin{teo}\label{th:split2}
Let $M$ be the $2$-torus or the Klein bottle, let $k\geq 2$, and let $n_1,\ldots,n_k\in \mathbb{N}$.
\begin{enumerate}[(a)]
\item\label{it:split2a} If there exist $l_{1},\ldots,l_{k-1}\in\mathbb{N}$ such that $n_{k}=l_{1}n_{1}+\cdots+ l_{k-1}n_{k-1}$, then the homomorphism $p_{\ast}\colon\thinspace B_{n_{1},\ldots,n_{k}}(M) \longrightarrow B_{n_{1},\ldots,n_{k-1}}(M)$ admits a section.
\item\label{it:split2b} If the homomorphism $p_{\ast}\colon\thinspace B_{n_{1},\ldots,n_{k}}(M) \longrightarrow B_{n_{1},\ldots,n_{k-1}}(M)$ admits a section, then there exist $l_{1},\ldots,l_{k-1}\in\mathbb{Z}$ such that $n_{k}=l_{1}n_{1}+\cdots+l_{k-1}n_{k-1}$.
\end{enumerate}
\end{teo}

The obstruction that occurs in part~(\ref{it:split2b}) of Theorem~\ref{th:split2} to proving the converse of part~(\ref{it:split2a}) is that our (algebraic) methods do not allow us to decide whether the integers $l_{1},\ldots,l_{k-1}$ are positive. In theory, some of these integers could be negative, but in that case, the section does not arise as the induced homomorphism of a cross-section for the map $p$. 
However, the following result shows that in one of the simplest such situations, where $k=3$, $q=1$, $n_1,n_2\geq 2$, $n_{3}=1$, and $M=\mathbb T$ or $\mathbb K$, the converse of Theorem~\ref{th:split2}(\ref{it:split2a}) holds, and if $n_1$ and $n_2$ are coprime then the conclusion of Theorem~\ref{th:split2}(\ref{it:split2b}) is also satisfied.


\begin{teo}\label{th:split3}
Let $M$ be either the $2$-torus or the Klein bottle, and let $t,s\geq 2$. Then the projection $p_{\ast}\colon\thinspace B_{t,s,1}(M)\longrightarrow B_{t,s}(M)$ does not admit a section.
\end{teo}

This gives some evidence to support the conjecture that the converse to Theorem~\ref{th:split2}(\ref{it:split2a}) holds in general. Note that if either $t=1$ or $s=1$ then $p_{\ast}\colon\thinspace B_{t,s,1}(M)\longrightarrow B_{t,s}(M)$ admits a section by Theorem~\ref{th:split2}(\ref{it:split2a}). 

\bigskip

\noindent
\textbf{Acknowledgements:}
the first author was  partially supported by the FAPESP Projeto Tem\'atico-FAPESP Topologia Alg\'ebrica, Geom\'etrica e Diferencial no.~2022/16455-6 (S\~ao Paulo-Brazil). His mission to the Laboratoire de Math\'ematiques Nicolas Oresme, Universit\'e de Caen Normandie during the period 14th--27th of May 2023 was also supported by the French-Brazilian network (R\'eseau Franco-Br\'esilien de Math\'ematiques).

\section{Group presentations}
 
In this section, we give presentations of some of the groups that will be studied in this paper. If $M=\mathbb{T}$ or $\mathbb{K}$, we will make use of the presentations of $P_{n}(M)$ and $B_{n}(M)$ that appeared in~\cite{CMP} and~\cite[Theorems~2.1 and~2.2]{GP}. Geometric representatives of the generators of $P_n(M)$ are illustrated in Figure~\ref{fig:geradores}, where the figures represent the projection of the braids onto $M$, so that the constant paths in each figure correspond to vertical strings of the braid.
\begin{figure}[ht!]
\hfill
\begin{tikzpicture}[scale=1, very thick]

{\draw[line width=1pt][->>] (0,1)-- (-1,0.5);}
{\draw[line width=1pt][-] (0,1)-- (-2,0);}
{\draw[line width=1pt][-<<] (0,-1) -- (1,-0.5);}
{\draw[line width=1pt][-] (2,0) -- (0,-1);}
{\draw[line width=1pt][-<] (0,1) -- (1,0.5);}
{\draw[line width=1pt][-] (2,0) -- (0,1);}
{\draw[line width=1pt][>-] (-1,-0.5) -- (-2,0);}
{\draw[line width=1pt][-] (0,-1) -- (-2,0);}

{\draw (0,0) circle (0.2mm);}
{\draw (0.3,0) circle (0.2mm);}
{\draw (0.6,0) circle (0.2mm);}
{\draw (-0.3,0) circle (0.2mm);}
{\draw (-0.6,0) circle (0.2mm);}

\draw (0,0) -- (1,0.5);
\draw[->>] (-1,-0.5) -- (-0.2,-0.1);

\node at (0,0.25) {$i$};
\node at (0,-1.5) {$a_{i}$};
{\draw[line width=1pt][->>] (5,1) -- (4,0.5);}
{\draw[line width=1pt][-] (5,1) -- (3,0);}
{\draw[line width=1pt][-<<] (5,-1) -- (6,-0.5);}
{\draw[line width=1pt][-] (7,0) -- (5,-1);}

{\draw[line width=1pt][-<] (5,1) -- (6,0.5);}
{\draw[line width=1pt][-] (7,0) -- (5,1);}
{\draw[line width=1pt][>-] (4,-0.5) -- (3,0);}
{\draw[line width=1pt][-] (5,-1) -- (3,0);}

{\draw (5,0) circle (0.2mm);}
{\draw (5.3,0) circle (0.2mm);}
{\draw (5.6,0) circle (0.2mm);}
{\draw (4.7,0) circle (0.2mm);}
{\draw (4.4,0) circle (0.2mm);}

\draw (5,0) -- (6,-0.5);
\draw[->>] (4,0.5) -- (4.8,0.1);

\node at (5,0.25) {$i$};
\node at (5,-1.5) {$b_{i}$};

{\draw[line width=1pt][->>] (10,1) -- (9,0.5);}
{\draw[line width=1pt][-] (10,1) -- (8,0);}
{\draw[ line width=1pt][-<<] (10,-1)--(11,-0.5);}
{\draw[ line width=1pt][-] (12,0) -- (10,-1);}

{\draw[line width=1pt][-<] (10,1) -- (11,0.5);}
{\draw[line width=1pt][-] (12,0) -- (10,1);}
{\draw[line width=1pt][>-] (9,-0.5) -- (8,0);}
{\draw[line width=1pt][-] (10,-1) -- (8,0);}

{\draw (10,0) circle (0.2mm);}
{\draw (10.5,0) circle (0.2mm);}
{\draw (11,0) circle (0.2mm);}
{\draw (9.5,0) circle (0.2mm);}
{\draw (9,0) circle (0.2mm);}

\draw[->>] (10.5,0) .. controls (9,1.35) and (9,-1.32) .. (10.45,-0.09);
\node at (9.65,0.15) {$i$};
\node at (10.65,0.15) {$j$};
\node at (10,-1.5) {$C_{i,j}$};

{\draw[ line width=1pt][->>] (0,-2.5) -- (-1,-3);} 
{\draw[ line width=1pt][-] (0,-2.5) -- (-2,-3.5);}
{\draw[ line width=1pt][-] (2,-3.5) -- (0,-4.5);} 
{\draw[ line width=1pt][->>] (0,-4.5) -- (1,-4);} 

{\draw[line width=1pt][-<] (0,-2.5) -- (1,-3);}
{\draw[line width=1pt][-] (2,-3.5) -- (0,-2.5);}
{\draw[line width=1pt][>-] (-1,-4) -- (-2,-3.5);}
{\draw[line width=1pt][-] (0,-4.5) -- (-2,-3.5);}

{\draw (0,-3.5) circle (0.2mm);}
{\draw (0.3,-3.5) circle (0.2mm);}
{\draw (0.6,-3.5) circle (0.2mm);}
{\draw (-0.3,-3.5) circle (0.2mm);}
{\draw (-0.6,-3.5) circle (0.2mm);}

\draw (0,-3.5) -- (1,-3);
\draw[->>] (-1,-4) -- (-0.2,-3.6);

\node at (0,-3.25) {$i$};
\node at (0,-5) {$a_{i}$};
{\draw[ line width=1pt][-] (5,-2.5) -- (3,-3.5);}
{\draw[ line width=1pt][->>] (5,-2.5) -- (4,-3);}
{\draw[ line width=1pt][->>] (5,-4.5) --(6,-4);}

{\draw[ line width=1pt][-] (7,-3.5) -- (5,-4.5);}

{\draw[line width=1pt][-<] (5,-2.5) -- (6,-3);}
{\draw[line width=1pt][-] (7,-3.5) -- (5,-2.5);}
{\draw[line width=1pt][>-] (4,-4) -- (3,-3.5);}
{\draw[line width=1pt][-] (5,-4.5) -- (3,-3.5);}

%
{\draw (5,-3.5) circle (0.2mm);}
{\draw (5.3,-3.5) circle (0.2mm);}
{\draw (5.6,-3.5) circle (0.2mm);}
{\draw (4.7,-3.5) circle (0.2mm);}
{\draw (4.4,-3.5) circle (0.2mm);}

\draw (5,-3.5) -- (6,-4);
\draw[->>] (4,-3) -- (4.8,-3.4);

\node at (5,-3.25) {$i$};
\node at (5,-5) {$b_{i}$};

{\draw[ line width=1pt][-] (10,-2.5) -- (8,-3.5);}
{\draw[ line width=1pt][->>] (10,-2.5) -- (9,-3);}
{\draw[ line width=1pt][->>] (10,-4.5) -- (11,-4);}
{\draw[ line width=1pt][-] (12,-3.5) -- (10,-4.5);}

{\draw[line width=1pt][-<] (10,-2.5) -- (11,-3);}
{\draw[line width=1pt][-] (12,-3.5) -- (10,-2.5);}
{\draw[line width=1pt][>-] (9,-4) -- (8,-3.5);}
{\draw[line width=1pt][-] (10,-4.5) -- (8,-3.5);}

{\draw (10,-3.5) circle (0.2mm);}
{\draw (10.5,-3.5) circle (0.2mm);}
{\draw (11,-3.5) circle (0.2mm);}
{\draw (9.5,-3.5) circle (0.2mm);}
{\draw (9,-3.5) circle (0.2mm);}

\draw[->>] (10.5,-3.5) .. controls (9,-2.15) and (9,-4.82) .. (10.45,-3.59);
\node at (9.65,-3.35) {$i$};
\node at (10.65,-3.35) {$j$};
\node at (10,-5) {$C_{i,j}$};

\node at (0,1.5) {$M=\mathbb{T}$};
\node at (0,-2) {$M=\mathbb{K}$};

\foreach \j  in {1,6,11}
{\node at (\j,-0.85) {$\alpha$};}

\foreach \j  in {-1,4,9}
{\node at (\j,0.85) {$\alpha$};}

\foreach \j  in {1,6,11}
{\node at (\j,0.85) {$\beta$};}

\foreach \j  in {-1,4,9}
{\node at (\j,-0.85) {$\beta$};}

\foreach \j  in {1,6,11}
{\node at (\j,-4.35) {$\alpha$};}

\foreach \j  in {-1,4,9}
{\node at (\j,-2.65) {$\alpha$};}

\foreach \j  in {1,6,11}
{\node at (\j,-2.65) {$\beta$};}

\foreach \j  in {-1,4,9}
{\node at (\j,-4.35) {$\beta$};}
\end{tikzpicture}
\hspace*{\fill}
\caption{The generators of $P_{n}(\mathbb{T})$ and $P_{n}(\mathbb{K})$}\label{fig:geradores}
\end{figure}
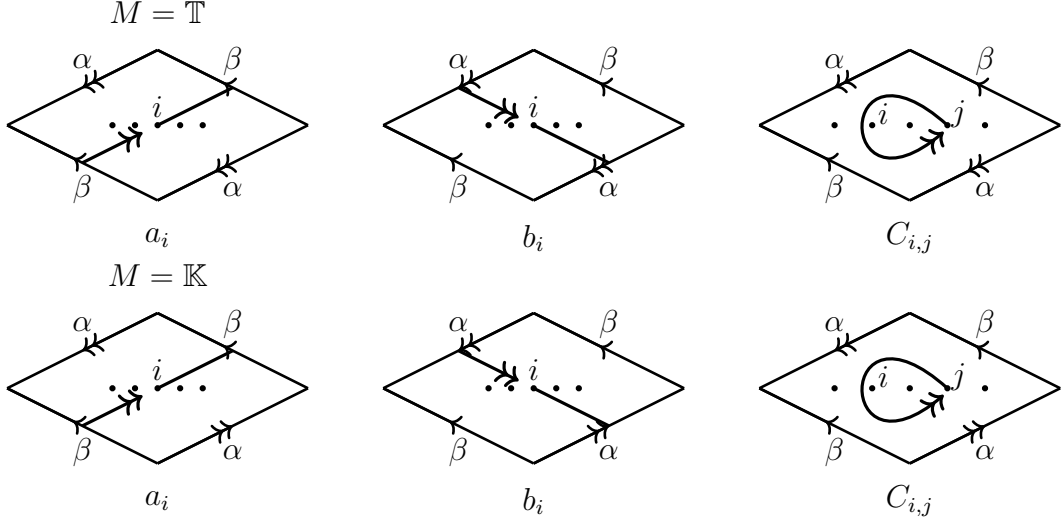
 
\begin{teo}[\cite{GP}]\label{th:puras}
Let $n\geq1$, and let $M$ be the torus $\mathbb{T}$ or the Klein bottle $\mathbb{K}$. The following constitutes a presentation of the pure braid group $P_n(M)$ of $M$:

\noindent
generating set: $\left\{a_i,\, b_i,\,i=1,\ldots,n\right\}\cup\left\{C_{i,j},\,1\leq i < j\leq n\right\}$.

\noindent
relations:
\begin{enumerate}[(1)]
\item\label{it:puras1} $a_ia_j=a_ja_i$, where $1\leq i<j\leq n$.
	
\item\label{it:puras2} $a^{-1}_{i}b_ja_i=b_ja_jC^{-1}_{i,j}C_{i+1,j}a^{-1}_{j}$, 	 where $1\leq i<j\leq n$. 
	
\item\label{it:puras3} $a^{-1}_{i}C_{j,k}a_i=\begin{cases} \text{$C_{j,k}$, where $1\leq i<j<k\leq n$ or $1\leq j<k<i\leq n$}\\
\text{$a_kC^{-1}_{i+1,k}C_{i,k}a^{-1}_{k}C_{j,k}C^{-1}_{i,k}C_{i+1,k}$, where $1\leq j\leq i<k\leq n$.}\end{cases}$
	
\item\label{it:puras4} $C^{-1}_{i,l}C_{j,k}C_{i,l}=\begin{cases} \text{$C_{j,k}$, where $1\leq i<l<j<k\leq n$ or $1\leq j\leq i<l<k\leq n$}\\
\text{$C_{i,k}C^{-1}_{l+1,k}C_{l,k}C^{-1}_{i,k}C_{j,k}C^{-1}_{l,k}C_{l+1,k}$, where $1\leq i< j\leq l<k\leq n$.}\end{cases}$
	
\item\label{it:puras5} $\begin{cases}
\text{$\prod^{n}_{j=i+1}C^{-1}_{i,j}C_{i+1,j}=a_{i}b_{i}C_{1,i}a^{-1}_{i}b^{-1}_{i}$, where $1\leq i\leq n$ and $M=\mathbb{T}$}\\
\text{$\prod^{n}_{j=i+1}C_{i,j}C^{-1}_{i+1,j}=b_{i}C_{1,i}a^{-1}_{i}b^{-1}_{i}a^{-1}_{i}$, where $1\leq i\leq n$ and $M=\mathbb{K}$.}\end{cases}$
	
\item\label{it:puras6} $b_{j}b_{i}=\begin{cases}\text{$b_{i}b_{j}$, where $1\leq i<j\leq n$ and $M=\mathbb{T}$}\\
\text{$b_{i}b_{j}C_{i,j}C^{-1}_{i+1,j}$, where $1\leq i<j \leq n$ and $M=\mathbb{K}$.}\end{cases}$
	
\item\label{it:puras7} $b^{-1}_{i}a_jb_i=\begin{cases}
\text{$a_jb_jC_{i,j}C^{-1}_{i+1,j}b^{-1}_{j}$, where $1\leq i<j\leq n$ and $M=\mathbb{T}$}\\
\text{$a_jb_j(C_{i,j}C^{-1}_{i+1,j})^{-1}b^{-1}_{j}$, where $1\leq i<j\leq n$ and $M=\mathbb{K}$.}\end{cases}$
	
\item\label{it:puras8}
$b^{-1}_{i}C_{j,k}b_i \!=\!
\begin{cases}\!
\begin{cases}\text{$C_{j,k}$, where $1\leq i<j<k\leq n$ or $1\leq j<k<i\leq n$}\\
\text{$C_{i+1,k}C^{-1}_{i,k}C_{j,k}b_kC_{i,k}C^{-1}_{i+1,k}b^{-1}_{k}$, where $1\leq j\leq i<k\leq n$}
\end{cases}  \text{and $M\!=\!\mathbb{T}$}\\[15pt]
\!\begin{cases} \text{$C_{j,k}$, where $1\leq i<j<k\leq n$ or $1\leq j<k<i\leq n$}\\
\text{$C_{i+1,k}C^{-1}_{i,k}C_{j,k}b_k(C_{i,k}C^{-1}_{i+1,k})^{-1}b^{-1}_{k}$, where $1\leq j\leq i<k\leq n$}\end{cases} \hspace*{-4mm} \text{and $M\!=\!\mathbb{K}$.}
\end{cases}$
\end{enumerate}
\end{teo}

\begin{remark}\label{rem:Cij}
For $1\leq i<j\leq n$, the elements $C_{i,j}$ may be described in terms of the Artin generators of $B_n(M)$, where  $C_{i,i+1}=\sigma^2_{i}$, for $1\leq i\leq n-1$, and $C_{i,j}=\sigma_{j-1} \sigma_{j-2}\cdots\sigma_{i+1}\sigma^{2}_{i}\sigma_{i+1}\cdots \sigma_{j-2} \sigma_{j-1}$
for $1\leq i$ and $i+1<j\leq n$. If $i = j$, then by convention we take $C_{i,i}$ to be the trivial braid. 
So if $3\leq j\leq n$ then using the Artin relations, we obtain:
\begin{align*}
C_{1,j}C^{-1}_{2,j}&=(\sigma_{j-1}\cdots\sigma_{2}\sigma_{1}^{2}\sigma_{2}\cdots\sigma_{j-1}) (\sigma^{-1}_{j-1}\cdots\sigma^{-1}_{3}\sigma^{-2}_{2}\sigma^{-1}_{3} \cdots \sigma^{-1}_{j-1})\notag\\
&=\sigma_{j-1}\cdots\sigma_{2}\sigma^{2}_{1}\sigma^{-1}_{2}\cdots\sigma^{-1}_{j-1}=\sigma^{-1}_{1}\cdots\sigma^{-1}_{j-2} \sigma^{2}_{j-1}\sigma_{j-2} \cdots\sigma_{1},\\
\intertext{and}
C_{1,j}^{-1}C_{2,j}&=(\sigma_{j-1}^{-1} \cdots \sigma_{2}^{-1} \sigma_{1}^{-2}\sigma_{2}^{-1}\cdots\sigma_{j-1}^{-1}) (\sigma_{j-1}\cdots\sigma_{3}\sigma^{2}_{2}\sigma_{3} \cdots \sigma_{j-1})\notag\\
&=\sigma_{j-1}^{-1}\cdots\sigma_{2}^{-1}\sigma^{-2}_{1}\sigma_{2}\cdots\sigma_{j-1}=\sigma_{1}\cdots\sigma_{j-2} \sigma^{-2}_{j-1}\sigma_{j-2}^{-1} \cdots\sigma_{1}^{-1}.
\end{align*}
Thus for all $2\leq k\leq n$:
\begin{align}
\prod^{k}_{j=2} C_{1,j}C^{-1}_{2,j}&=\sigma^{2}_{1}(\sigma^{-1}_{1}\sigma^{2}_{2}\sigma_{1})(\sigma^{-1}_{2}\sigma^{-1}_{2}\sigma^{2}_{3}\sigma_{2}\sigma_{1})\cdots(\sigma^{-1}_{1} \cdots \sigma^{-1}_{k-2} \sigma^{2}_{k-1}\sigma_{k-2}\cdots\sigma_{1})\notag\\
&=\sigma_{1}\cdots \sigma_{k-2} \sigma^{2}_{k-1} \sigma_{k-2} \cdots \sigma_{1},\label{eq:c1j2ja}\\
\intertext{and}
\prod^{k}_{j=2} C_{1,j}^{-1}C_{2,j} &=\sigma^{-2}_{1}(\sigma_{1}\sigma^{-2}_{2}\sigma^{-1}_{1})(\sigma_{2}\sigma_{2}\sigma^{-2}_{3}\sigma^{-1}_{2}\sigma^{-1}_{1})\cdots(\sigma_{1}\cdots \sigma_{k-2} \sigma^{-2}_{k-1} \sigma^{-1}_{k-2} \cdots \sigma^{-1}_{1})\notag\\
&=\sigma^{-1}_{1} \cdots \sigma^{-1}_{k-2} \sigma^{-2}_{k-1} \sigma^{-1}_{k-2} \cdots \sigma^{-1}_{1} = \left( \prod^{k}_{j=2} C_{1,j}C^{-1}_{2,j}\right)^{-1}. \notag
\end{align}
\end{remark}

\begin{teo}[\cite{GP}]\label{th:total} Let $n\geq 1$, and let $M$ be the torus $\mathbb{T}$ or the Klein bottle $\mathbb{K}$. The following constitutes a presentation of the braid group $B_n(M)$ of $M$:

\noindent
generating set: $\left\lbrace a, b, \sigma_1, \ldots, \sigma_{n-1}\right\rbrace $.

\noindent
relations:
\begin{enumerate}[(1)]
\item\label{it:full1} $\sigma_{i}\sigma_{i+1}\sigma_{i}=\sigma_{i+1}\sigma_{i}\sigma_{i+1}$ if $i=1,\ldots, n-2$.
	
\item\label{it:full2} $\sigma_{j}\sigma_{i}=\sigma_{i}\sigma_{j}$ if $1\leq i,j\leq n-1$ and $\lvert i-j\rvert \geq 2$.

\item\label{it:full3} $a\sigma_{j}=\sigma_{j}a$ if $2\leq j\leq n-1$.
	
\item\label{it:full4} $b\sigma_{j}=\sigma_{j}b$ if $2\leq j\leq n-1$.
	
\item\label{it:full5} $b^{-1}\sigma_{1}a=\sigma_{1}a\sigma_{1}b^{-1}\sigma_{1}$.

\item\label{it:full6} $a(\sigma_1a\sigma_1)=(\sigma_1a\sigma_1)a$.

\item\label{it:full7} $\begin{cases} b(\sigma^{-1}_{1}b\sigma^{-1}_{1})=(\sigma^{-1}_{1}b\sigma^{-1}_{1})b & \text{if $M=\mathbb{T}$}\\
b(\sigma^{-1}_{1}b\sigma_{1})=(\sigma^{-1}_{1}b\sigma^{-1}_{1})b & \text{if $M=\mathbb{K}$.}\end{cases}$
	
\item\label{it:full8} $\sigma_{1}\sigma_{2}\cdots\sigma_{n-2}\sigma^{2}_{n-1}\sigma_{n-2}\cdots\sigma_{2}\sigma_{1}=\begin{cases} bab^{-1}a^{-1}& \text{if $M=\mathbb{T}$}\\
ba^{-1}b^{-1}a^{-1}& \text{if $M=\mathbb{K}$.}\end{cases}$
\end{enumerate}
\end{teo}

\begin{remark}\label{rem:aba1b1}
In terms of the generators of $P_{n}(M)$, the generators $a$ and $b$ of Theorem~\ref{th:total} are equal to the generators $a_{1}$ and $b_{1}$ of Theorem~\ref{th:puras} respectively.
\end{remark}

In order to obtain a presentation of $B_m(M\setminus\left\{x_1,\ldots, x_n\right\})$, we require first a presentation of $P_m(M\setminus\left\{x_1, \ldots, x_n\right\})$. 

\begin{prop}\label{prop:prespmminusn}
Let $M=\mathbb{T}$ or $M=\mathbb{K}$, $n,m\geq 1$. The group $P_m(M\setminus\left\{x_1,\ldots,x_n\right\})$ admits the following presentation:

\noindent
generating set: $\left\lbrace a_{i},b_{i}, n+1\leq i\leq n+m\right\rbrace  \cup \left\lbrace C_{i,j},1\leq i< j,\, n+1\leq j\leq n+m\right\rbrace$.

\noindent
relations:
\begin{enumerate}[(1)]
\item\label{it:l7} $a_ia_j=a_ja_i$, where $n+1\leq i<j\leq n+m$.
		
\item\label{it:l8} $a^{-1}_{i}b_ja_i=b_ja_jC^{-1}_{i,j}C_{i+1,j}a^{-1}_{j}$, where $n+1\leq i<j\leq n+m$.
	
\item\label{it:l9} $a^{-1}_{i}C_{j,k}a_i=\begin{cases} 
C_{j,k} & \text{if $i<j<k$ or $j<k<i$}\\
a_kC^{-1}_{i+1,k}C_{i,k}a^{-1}_{k}C_{j,k}C^{-1}_{i,k}C_{i+1,k} & \text{if  $j\leq i<k$,}\end{cases}$\\
where $n+1\leq i\leq n+m-1$, $1\leq j< k\leq n+m$ and $n+1\leq k$.
		
\item\label{it:l10} $C^{-1}_{i,l}C_{j,k}C_{i,l}=\begin{cases}
C_{j,k}& \text{if $i<l<j<k$ or $j\leq i<l<k$}\\
C_{i,k}C^{-1}_{l+1,k}C_{l,k}C^{-1}_{i,k}C_{j,k}C^{-1}_{l,k}C_{l+1,k} & \text{if $i\leq j\leq l<k$,}\end{cases}$\\
where $1\leq i,j,k,l \leq n+m$ and $n+1\leq k,l$.
	
\item\label{it:l11} $\begin{cases}
\prod^{n+m}_{j=i+1} C^{-1}_{i,j}C_{i+1,j}=a_{i}b_{i}C_{1,i}a^{-1}_{i}b^{-1}_{i} &  \text{if $n+1 \leq i\leq n+m$ and $M=\mathbb{T}$}\\
\prod^{n+m}_{j=i+1} C_{i,j}C^{-1}_{i+1,j}=b_{i}C_{1,i}a^{-1}_{i}b^{-1}_{i}a^{-1}_{i} & \text{if $n+1\leq i\leq n+m$ and $M=\mathbb{K}$.}\end{cases}$
	
\item\label{it:l12} $b_{j}b_{i}=\begin{cases} b_{i}b_{j} & \text{if $n+1\leq i<j\leq n+m$ and $M=\mathbb{T}$}\\
b_{i}b_{j} C_{i,j}C^{-1}_{i+1,j} & \text{if $n+1\leq i<j \leq n+m$ and $M=\mathbb{K}$.}\end{cases}$
	
\item\label{it:l13} $b^{-1}_{i}a_jb_i=\begin{cases}
a_jb_jC_{i,j}C^{-1}_{i+1,j}b^{-1}_{j} & \text{if $n+1\leq i<j\leq n+m$ and $M=\mathbb{T}$}\\
a_jb_j(C_{i,j}C^{-1}_{i+1,j})^{-1}b^{-1}_{j} & \text{if $n+1\leq i<j\leq n+m$ and $M=\mathbb{K}$.}\end{cases}$
	
\item\label{it:l14} $
b^{-1}_{i}C_{j,k}b_i=\begin{cases}\begin{cases} \text{$C_{j,k}$ if $i<j<k$ or $j<k<i$}\\
\text{$C_{i+1,k}C^{-1}_{i,k}C_{j,k}b_kC_{i,k}C^{-1}_{i+1,k}b^{-1}_{k}$, if $j\leq i<k$}\end{cases} \text{and $M=\mathbb{T}$}\\[15pt]
\begin{cases}
\text{$C_{j,k}$ if $i<j<k$ or $j<k<i$}\\
\text{$C_{i+1,k}C^{-1}_{i,k}C_{j,k}b_k(C_{i,k}C^{-1}_{i+1,k})^{-1}b^{-1}_{k}$ if $j\leq i<k$}\end{cases} \text{and $M=\mathbb{K}$,}\end{cases}$\\
where $n+1\leq i\leq n+m$, $1\leq j<k\leq n+m$ and $n+1\leq k$.
\end{enumerate}
\end{prop}

\begin{remark}
With respect to the short exact sequence~(\ref{eq:seqFN}) and the presentation of $P_{n+m}(M)$ given by Theorem~\ref{th:puras}, the generating set of $P_m(M\setminus\left\{x_1,\ldots,x_n\right\})$ given in Proposition~\ref{prop:prespmminusn} is obtained by taking those generators of $P_{n+m}(M)$ that belong to $P_m(M\setminus\left\{x_1,\ldots,x_n\right\})$, and the relations of $P_m(M\setminus\left\{x_1,\ldots,x_n\right\})$ are those relations of $P_{n+m}(M)$ that contain only elements of the given generating set. Another way of expressing this is that the presentation of $P_{n+m}(M)$ of Theorem~\ref{th:puras} is obtained by applying the methods of~\cite[Proposition~1, p.~139]{J} to the short exact sequence~(\ref{eq:seqFN}), where $P_m(M\setminus\left\{x_1,\ldots,x_n\right\})$ is taken to be equipped with the presentation given by Proposition~\ref{prop:prespmminusn}.
\end{remark}

\begin{proof}[Proof of Proposition~\ref{prop:prespmminusn}]
We prove the result by induction on $m\geq 1$. If $m=1$ then a generating set for $P_1(M\setminus\left\{x_1,\ldots,x_n\right\})$ is $\{ a_{n+1}, b_{n+1}, C_{i,n+1},\, 1\leq i\leq n\}$, subject to the (single) surface relation $a_{n+1} b_{n+1} C_{1,n+1} a_{n+1}^{-1} b_{n+1}^{-1}=1$ (resp.\ $b_{n+1} C_{1,n+1} a_{n+1}^{-1} b_{n+1}^{-1} a_{n+1}^{-1}=1$) if $M=\mathbb{T}$ (resp.\ $M=\mathbb{K}$) (this is relation~(\ref{it:puras5}) 
of Theorem~\ref{th:puras} in the case $i=n+1$), which is the presentation given in the statement of the proposition (in the case $m=1$, note that the only relation of~(\ref{it:l7})--(\ref{it:l14}) that exists is relation~(\ref{it:l11})). 

So let $m\geq 1$, and suppose that the presentation of the statement is valid for $P_m(M\setminus\left\{x_1,\ldots,x_n\right\})$. Making use of the short exact sequence~(\ref{eq:seqFN}) for both $M$ and $M\setminus\left\{x_1,\ldots,x_n\right\}$, we obtain the following commutative diagram of short exact sequences: 
\begin{equation}\label{eq:pmmminus}
\begin{tikzcd}[cramped, sep=scriptsize]
&  1 \arrow{d} & 1\arrow{d} &&\\
& P_{1}(M\setminus\left\{x_1,\ldots,x_{n+m}\right\}) \arrow[equal]{r} \arrow{d} & P_{1}(M\setminus\left\{x_1,\ldots,x_{n+m}\right\}) \arrow{d} & &\\
1 \arrow{r} & P_{m+1}(M\setminus\left\{x_1,\ldots,x_n\right\}) \arrow{r} \arrow{d}{p_{\ast}} & P_{n+m+1}(M) \arrow{r}{p_{\ast}} \arrow{d}{p_{\ast}} & P_{n}(M) \arrow{r} \arrow[equal]{d} & 1\\
1 \arrow{r} & P_{m}(M\setminus\left\{x_1,\ldots,x_n\right\}) \arrow{r} \arrow{d} & P_{n+m}(M) \arrow{r}{p_{\ast}} \arrow{d}  & P_{n}(M) \arrow{r} & 1,\\
&  1 & 1 && 
\end{tikzcd}
\end{equation}
where each of the homomorphisms $p_{\ast}$ is that of~(\ref{eq:seqFN}) for either $M$ or $M\setminus\left\{x_1,\ldots,x_n\right\}$, obtained by forgetting the appropriate number of strings. We now apply the methods of~\cite[Proposition~1, p.~139]{J} to the leftmost column of~(\ref{eq:pmmminus}). Taking the union of the set $\left\lbrace a_{i},b_{i}, n+1\leq i\leq n+m\right\rbrace \cup \left\lbrace C_{i,j},1\leq i< j,\, n+1\leq j\leq n+m\right\rbrace$ of coset representatives in $P_{m+1}(M\setminus\left\{x_1,\ldots,x_n\right\})$ of the given generating set of $P_{m}(M\setminus\left\{x_1,\ldots,x_n\right\})$ with the generating set $\{ a_{n+m+1}, b_{n+m+1}, C_{i,n+m+1},\, 1\leq i\leq n+m\}$ of $P_{1}(M\setminus\left\{x_1,\ldots,x_{n+m}\right\})$, we obtain the
generating set of $P_{m+1}(M\setminus\left\{x_1,\ldots,x_n\right\})$ of the statement. The corresponding relations are obtained as follows:
\begin{enumerate}[\textbullet]
\item all of the relations~(\ref{it:l7})--(\ref{it:l14}) of $P_{m}(M\setminus\left\{x_1,\ldots,x_n\right\})$ lift directly to relations of $P_{m+1}(M\setminus\left\{x_1,\ldots,x_n\right\})$, with the exception of relation~(\ref{it:l11}). We analyse the lift of this relation in $P_{m+1}(M\setminus\left\{x_1,\ldots,x_n\right\})$. Considering the inclusion of this group in $P_{n+m+1}(M)$, and using relation~(\ref{it:puras5}) of Theorem~\ref{th:puras}, for $n+1\leq i\leq n+m$, we have:
\begin{align*}
\left(\prod^{n+m}_{j=i+1} C^{-1}_{i,j} C_{i+1,j}\right)^{-1} a_{i}b_{i} C_{1,i} a^{-1}_{i} b^{-1}_{i}= C^{-1}_{i,n+m+1} C_{i+1,n+m+1} && \text{if $M=\mathbb{T}$}\\
\left(\prod^{n+m}_{j=i+1} C_{i,j}C^{-1}_{i+1,j}\right)^{-1} b_{i}C_{1,i}a^{-1}_{i}b^{-1}_{i}a^{-1}_{i} =C_{i,n+m+1}C^{-1}_{i+1,n+m+1} && \text{if $M=\mathbb{K}$.}
\end{align*}
Since the right-hand side of each of these equalities belongs to $P_{1}(M\setminus\left\{x_1,\ldots,x_{n+m}\right\})$, we obtain relation~(\ref{it:l11}) in $P_{m+1}(M\setminus\left\{x_1,\ldots, x_n\right\})$ for all $i=n+1, \ldots, n+m$. In particular, this yields relations~(\ref{it:l7})--(\ref{it:l14}) of $P_{m+1}(M\setminus\left\{x_1, \ldots, x_n\right\})$ for the possible values of the indices excluding the cases where some of the indices are equal to $n+m+1$. 

\item if $M=\mathbb{T}$ (resp.\ $M=\mathbb{K}$), the single relation $a_{n+m+1}b_{n+m+1} C_{1,n+m+1} a^{-1}_{n+m+1} b^{-1}_{n+m+1}=1$ (resp.\ $b_{i}C_{1,i}a^{-1}_{i}b^{-1}_{i}a^{-1}_{i}=1$) of $P_{1}(M\setminus\left\{x_1,\ldots,x_{n+m}\right\})$ gives rise to relation~(\ref{it:l11}) of $P_{m+1}(M\setminus\left\{x_1, \ldots,x_n\right\})$ for the case $i=n+m+1$.

\item the conjugates of the elements of the generating set $P_{1}(M\setminus\left\{x_1,\ldots,x_{n+m}\right\})$ by the coset representatives of the elements of the generating set of $P_{m}(M\setminus\left\{x_1, \ldots,x_n\right\})$. Using the corresponding relations of Theorem~\ref{th:puras}, we obtain relations~(\ref{it:l7})--(\ref{it:l10}) and~(\ref{it:l12})--(\ref{it:l14}) of the given presentation in the cases where some of the indices are equal to $n+m+1$. 
\end{enumerate}
Combining these relations, we obtain the presentation of $P_{m+1}(M\setminus\left\{x_1, \ldots,x_n\right\})$ given in the statement of the proposition.\qedhere
\end{proof}

The next step is to obtain a presentation for the group $B_m(M\setminus\left\{x_1,\ldots,x_n\right\})$ that appears in the short exact sequence~(\ref{eq:gen.seqFN}).

\begin{prop}\label{prop:kernel} Let $M=\mathbb{T}$ or $M=\mathbb{K}$, $n\geq1$ and $m\geq2$. The group $B_m(M\setminus\left\{x_1,\ldots,x_n\right\})$ admits the following presentation:

\noindent
generating set: $\left\lbrace a_{i},b_{i}, n+1\leq i\leq n+m\right\rbrace  \cup \left\lbrace C_{i,j},1\leq i< j,\, n+1\leq j\leq n+m\right\rbrace\cup\left\lbrace \sigma_{n+1},\ldots,\sigma_{n+m-1}\right\rbrace$.

\noindent
relations:
\begin{enumerate}[(1)]

\item\label{it:k0} relations~(\ref{it:l7})--(\ref{it:l14}) of Proposition~\ref{prop:prespmminusn}.

\item\label{it:k1} $\sigma_{i}\sigma_{i+1}\sigma_{i}=\sigma_{i+1}\sigma_{i}\sigma_{i+1}$ if $i=n+1,\ldots, n+m-2$.
\item\label{it:k2} $\sigma_{i}\sigma_{j}=\sigma_{j}\sigma_{i}$ if $n+1\leq i,j\leq n+m-1$ and $\lvert i-j\rvert \geq 2$.
	
\item\label{it:k3} $\sigma^{-1}_{i}a_{j}\sigma_{i}=\begin{cases} a_{j} & \text{if $j\neq i,i+1$}\\ 
\sigma^{-2}_{i}a_{i+1} & \text{if $j=i$}\\
a_{i}\sigma^{2}_{i} & \text{if $j=i+1$}\end{cases}$ where $n+1\leq i\leq n+m-1$ and $n+1\leq j\leq n+m$. 
	
\item\label{it:k4} $\sigma^{-1}_{i}b_{j}\sigma_{i}=\begin{cases} b_{j} & \text{if $j\neq i,i+1$}\\
b_{i+1}\sigma^{2}_{i} & \text{if $j=i$}\\
\sigma^{-2}_{i}b_{i} & \text{if $j=i+1$}\end{cases}$ where $n+1\leq i\leq n+m-1$ and $n+1\leq j\leq n+m$. 
	
\item\label{it:k5} $\sigma^{-1}_{i}C_{l,j}\sigma_{i}=\begin{cases} 
C_{l,j} & \text{if $i+1<l<j$, $l\leq i <j-1$ or $l<j<i$}\\ C^{-1}_{j,j+1}C_{l,j+1} & \text{if $i=j$}\\ 
C_{l,j-1}C_{j-1,j} & \text{if $i=j-1$}\\
C_{l-1,j}C^{-1}_{l,j}C_{l+1,j}  & \text{if $l=i+1$}\end{cases}$\\
where $n+1\leq i\leq n+m-1$, $1\leq l< j$ and $n+1\leq j \leq n+m$. 
	
\item\label{it:k6} $C_{i,i+1}=\sigma^{2}_{i}$, where $n+1\leq i\leq n+m-1$.
\end{enumerate}
\end{prop}

\begin{proof}
We apply the methods of~\cite[Proposition~1, p.~139]{J} to the short exact  sequence~(\ref{eq:seq1}), where we replace $M$ by $M\setminus\left\lbrace x_{1},\ldots,x_{n}\right\rbrace$ and $n$ by $m$. 
A generating set of $B_{m}(M\setminus\left\lbrace x_{1},\ldots,x_{n}\right\rbrace)$ is given by the union of a generating set of $P_{m}(M\setminus\left\lbrace x_{1},\ldots,x_{n}\right\rbrace)$ with a set of coset representatives for the projection $B_m(M\setminus\left\lbrace x_{1},\ldots,x_{n}\right\rbrace)\to S_m$ of a generating set of $S_{m}$, and by Theorem~\ref{th:puras}, we may take $\left\lbrace a_{i},b_{i}, n+1\leq i\leq n+m\right\rbrace \cup \left\lbrace C_{i,j},1\leq i<j,\,n+1\leq j\leq n+m\right\rbrace$ and $\{ \sigma_{n+1},\ldots,\sigma_{n+m-1}\}$ respectively for these generating sets, which yields the generating set of the statement. 
	
The first type of relation among the elements of the given generating set is obtained by taking the relations of $P_{m}(M\setminus\left\lbrace  x_{1},\ldots,x_{n}\right\rbrace)$ given by Proposition~\ref{prop:prespmminusn}, which are relations~(\ref{it:k0}) of the statement.
We obtain the second type of relation by rewriting the relations of $S_{m}$ in terms of the given coset representatives, and expressing the corresponding element as words in terms of the generators of $B_{m}(M\setminus\left\lbrace x_{1},\ldots, x_{n}\right\rbrace)$. The group $S_{m}$ is generated by elements $s_{1},\ldots, s_{m-1}$, where for $i=1,\ldots, m-1$, $\sigma_i$ is a coset representative of $s_i$, and the generators are subject to the relations $s_{i}s_{i+1}s_{i}= s_{i+1}s_{i}s_{i+1}$ if $1\leq i \leq m-2$, $s_{i}s_{j}=s_{j}s_{i}$ if $1\leq i,j \leq m-1$ and $\lvert i-j\rvert \geq 2$, and $s^{2}_{i}=1$ if $1\leq i \leq m-1$. This yields relations~(\ref{it:k1}),~(\ref{it:k2}) and~(\ref{it:k6}) respectively. The third type of relation is obtained by writing the conjugates of the generators of the kernel by the coset representatives as words written entirely in terms of the generators of the kernel. This rewriting process may be carried out using the geometric description of the braids given in Figures~\ref{fig:artin} and~\ref{fig:geradores} (see also~\cite[equation~(5.7)]{GP}), which yields relations~(\ref{it:k3}) and~(\ref{it:k4}). We may use the Artin relations and Remark~\ref{rem:Cij} to obtain relation~(\ref{it:k5}).
\end{proof}

\begin{remark}
The presentation of $B_m(M\setminus\left\{x_1,\ldots,x_n\right\})$ given in Proposition~\ref{prop:kernel} may be simplified by eliminating some of the generators ($a_{i}$ and $b_{i}$, where $i=n+1,\ldots,n+m-1$, and $C_{i,j}$, where $n+1<i<j<n+m$, for example), but we shall not do so here. 
\end{remark}

In what follows, we will make use of certain quotients of the group $B_m(M\setminus\left\{x_1,\ldots,x_n\right\})$, one of which is described in the following proposition. If $G$ is a group, let $G\ab$ denote its Abelianisation.

\begin{prop}\label{prop:kernelab}
Let $M=\mathbb{T}$ or $M=\mathbb{K}$, and let $m\geq 2$ and $n\geq 1$. Then $B_m(M\setminus\left\{x_1,\ldots,x_n\right\})\ab$ is isomorphic to $\mathbb{Z}_{2}\oplus \mathbb{Z}^{n+1}$, where the factors of this decomposition are generated by elements $\sigma, x,y,\rho_{2},\ldots,\rho_{n}\in B_m(M\setminus\left\{x_1, \ldots,x_n\right\})\ab$ respectively, where for $i=2,\ldots,n$, $j=n+1,\ldots,n+m$ and $k=n+1,\ldots,n+m-1$, the elements $\sigma_{k}, a_{j}, b_{j}$ and $C_{i,j}$ 
of $B_m(M\setminus\left\{x_1,\ldots,x_n\right\})$ are coset representatives  of $\sigma, x, y$ and $\rho_{i}$ respectively, and $\sigma$ is of order $2$.
\end{prop}

\begin{proof}
Let $m\geq 2$ and $n\geq 1$. To obtain a presentation of $B_m(M\setminus\left\{x_1,\ldots,x_n\right\})\ab$, we Abelianise the presentation of $B_m(M\setminus\left\{x_1,\ldots,x_n\right\})$ given in Proposition~\ref{prop:kernel}, making use of the presentation given by  Proposition~\ref{prop:prespmminusn} whose relations are relations~(\ref{it:k0}) of Proposition~\ref{prop:kernel}. By relation~(\ref{it:k1}) of that proposition, it follows that $\sigma_{j}=\sigma_{j+1}$ in $B_m(M\setminus\left\{x_1,\ldots,x_n\right\})\ab$ for all $j=n+1,\ldots,n+m-2$: we denote the coset of $\sigma_j$ by $\sigma$. By relation~(\ref{it:l8}) of Proposition~\ref{prop:prespmminusn}, we have $C^{-1}_{i,j}C_{i+1,j}=1$ in $B_m(M\setminus\left\{x_1,\ldots,x_n\right\})\ab$ for all $n+1\leq i < j\leq n+m$. In particular, if $i+1=j$, since $C_{j,j}=1$ by Remark~\ref{rem:Cij}, we see that $C_{i,i+1}=1$, and by induction we obtain $C_{i,j}=1$ for all $n+1\leq i < j\leq n+m$. It follows from relation~(\ref{it:k6}) that $\sigma^2=1$. Applying this to relations~(\ref{it:k3}) and~(\ref{it:k4}), we see that $a_{j}=a_{j+1}$ and $b_{j}=b_{j+1}$ in $B_m(M\setminus\left\{x_1,\ldots,x_n\right\})\ab$ for all $n+1\leq j \leq n+m-1$: we denote the coset of these elements by $x$ and $y$ respectively. Taking $i=j$ in relation~(\ref{it:k5}), where $n+1\leq i\leq n+m-1$, we see that $C_{l,j}=C_{l,j+1}$ for all $1\leq l \leq n$: we denote the coset of the element $C_{l,j}$ by $\rho_{l}$. By relation~(\ref{it:l11}) of Proposition~\ref{prop:prespmminusn} we have $\rho_{1}=C_{1,n+m}=1$ if $M=\mathbb{T}$, and $\rho_{1}=C_{1,n+m}=a^{2}_{n+m}=x^{2}$ if $M=\mathbb{K}$. Using the information that we have already obtained, the remaining relations of Proposition~\ref{prop:kernel} yield no new relations in $B_m(M\setminus\left\{x_1,\ldots,x_n\right\})\ab$. It follows that $B_m(M\setminus\left\{x_1,\ldots,x_n\right\})\ab$ is generated by the elements $\sigma, x,y,\rho_{1},\ldots,\rho_{n}$, subject to the relations that these elements commute pairwise, that $\sigma^2=1$, and that $\rho_{1}=1$ if $M=\mathbb{T}$, and $\rho_{1}=x^{2}$ if $M=\mathbb{K}$. We may thus remove $\rho_{1}$ from the generating set, and apart from the fact that the elements commute pairwise, the only relation is $\sigma^2=1$. The proposition then follows.
\end{proof}

\begin{remark} 
If $m=1$, then $B_1(M\setminus\left\{x_1,\ldots,x_n\right\})\ab$ is a free Abelian group of rank $n+1$.
\end{remark}

Using the same method to obtain a presentation of a group extension, the following result gives a presentation of the mixed braid group $B_{n,m}(M)$. 

\begin{prop}\label{prop:B_{n,m}}
Let $M=\mathbb{T}$ or $M=\mathbb{K}$, and let $m\geq 2$ and $n\geq 1$. Then $B_{n,m}(M)$ admits the following presentation:

\noindent
generating set: $\displaystyle \begin{array}{l} \left\lbrace a_{i},b_{i},\, n+1\leq i\leq n+m\right\rbrace \cup \left\lbrace C_{i,j},\, 1\leq i< j,\, n+1\leq j\leq n+m\right\rbrace\cup \\ \left\lbrace a,b \right\rbrace \cup\left\lbrace \sigma_{i},\, 1\leq i \leq n+m-1,\, i\neq n\right\rbrace.  \end{array}$


%

\noindent
relations:
\begin{itemize}
\item \textbf{Type I:} relations~(\ref{it:k0})--(\ref{it:k6}) of Proposition~\ref{prop:kernel}.

\item \textbf{Type II:} relations~(\ref{it:full1})--(\ref{it:full7}) of Theorem~\ref{th:total}, together with:
\begin{enumerate}[(1)]
\item\label{it:Bnm1} the surface relation: 
\begin{equation*}
\prod^{m}_{i=1} C_{1,n+i}C^{-1}_{2,n+i}=\begin{cases}
(\sigma_{1}\cdots\sigma_{n-2}\sigma^{2}_{n-1}\sigma_{n-2}\cdots\sigma_{1})^{-1}bab^{-1}a^{-1} & \text{if $M=\mathbb{T}$}\\
(\sigma_{1}\cdots \sigma_{n-2}\sigma^{2}_{n-1} \sigma_{n-2}\cdots\sigma_{1})^{-1}ba^{-1}b^{-1}a^{-1} & \text{if $M=\mathbb{K}$.}
\end{cases}
\end{equation*}	
\end{enumerate}

\item \textbf{Type III:} the conjugates of the generators of $B_m(M\setminus\left\{x_1,\ldots,x_n\right\})$ by the coset representatives of the generators of $B_{n}(M)$: 
\begin{enumerate}[(1)]
\setcounter{enumi}{1}

\item\label{it:Bnm2} for $n+1\leq j \leq n+m$, $a^{-1}a_{j}a=a_{j}$.

\item\label{it:Bnm3} for $n+1\leq j \leq n+m$, $a^{-1}b_{j}a= b_{j}a_{j}C^{-1}_{1,j}C_{2,j}a^{-1}_{j}$.

\item\label{it:Bnm5} for $n+1\leq j \leq n+m$, $b^{-1}a_{j}b=\begin{cases}
a_{j}b_{j}C_{1,j}C^{-1}_{2,j}b^{-1}_{j}& \text{if $M=\mathbb{T}$}\\
a_{j}b_{j}(C_{1,j}C^{-1}_{2,j})^{-1}b^{-1}_{j}& \text{if $M=\mathbb{K}$.}\end{cases}$

\item\label{it:Bnm4} for $n+1\leq j \leq n+m$, $b^{-1}b_{j}b= \begin{cases}
b_{j} & \text{if $M=\mathbb{T}$}\\
b_{j}C_{1,j}C^{-1}_{2,j} & \text{if $M=\mathbb{K}$.}
\end{cases}$

\item\label{it:Bnm6} for $1\leq i \leq n-1$ and $n+1\leq j \leq n+m$, $\sigma^{-1}_{i}a_{j}\sigma_{i}=a_{j}$ and  $\sigma^{-1}_{i}b_{j}\sigma_{i}=b_{j}$.

\item\label{it:Bnm8} for $1\leq i<j$, $n+1\leq j \leq n+m$, $a^{-1}C_{i,j}a=\begin{cases}
a_j C_{2,j}^{-1} C_{1,j} a_j^{-1} C_{2,j} & \text{if $i=1$}\\
C_{i,j} & \text{otherwise.}
\end{cases}$
\item\label{it:Bnm9} for $1\leq i<j$, $n+1\leq j \leq n+m$:
\begin{equation*}
b^{-1}C_{i,j}b=\begin{cases}
\left \{ \begin{aligned}
C_{2,j} b_j C_{1,j} C_{2,j}^{-1} b_j^{-1}\\ 
C_{2,j} b_j C_{2,j} C_{1,j}^{-1} b_j^{-1}
\end{aligned}\right.
& \begin{aligned}
\text{if $i=1$ and $M=\mathbb{T}$}\\
\text{if $i=1$ and $M=\mathbb{K}$}
\end{aligned}\\
C_{i,j} & \text{otherwise.}
\end{cases}
\end{equation*}

\item\label{it:Bnm7} for $1\leq i \leq n-1$ and $1\leq l<j$, $n+1\leq j \leq n+m$:
\begin{equation*}
\sigma^{-1}_{i}C_{l,j}\sigma_{i}=\begin{cases}
C_{l-1,j}C^{-1}_{l,j}C_{l+1,j}& \text{if $l=i+1$.}\\
C_{l,j} & \text{otherwise.} \end{cases}
\end{equation*}

\item\label{it:Bnm10} for all $1\leq i\leq n-1$ and $n+1\leq j\leq n+m-1$, $[a,\sigma_j]= [b,\sigma_j]= [\sigma_i,\sigma_j]=1$.
\end{enumerate}
\end{itemize}
\end{prop}

\begin{proof}
Applying the methods of~\cite[Proposition~1, p.~139]{J} to the short exact sequence~(\ref{eq:gen.seqFN}), a set of generators of $B_{n,m}(M)$ is the union of the set of generators of $B_{m}(M\setminus\left\{x_1,\ldots,x_n\right\})$ given by Proposition~\ref{prop:kernel} with the set $\{a,b,\sigma_1, \ldots, \sigma_{n-1}\}$ of coset representatives for $p_{\ast}$ of the generating set of $B_n(M)$ given by Theorem~\ref{th:total}, and this is the generating set given in the statement. There are three types of relations in $B_{n,m}(M)$. The relations of Type~I are those of  $B_{m}(M\setminus\left\{x_1,\ldots,x_n\right\})$ given by Proposition~\ref{prop:kernel}. The relations of Type~II are obtained by lifting the relations~(\ref{it:full1})--(\ref{it:full8}) of $B_n(M)$ given by Theorem~\ref{th:total}, and rewriting the result in terms of the generators of $B_{m}(M\setminus\left\{x_1,\ldots,x_n\right\})$. With the exception of the surface relation~(\ref{it:full8}) of Theorem~\ref{th:total}, all of these lifted relations are also relations in $B_{n,m}(M)$. To lift this surface relation, notice that $bab^{-1}a^{-1}$ (resp.\ $ba^{-1}b^{-1}a^{-1}$) is equal to $\sigma_{1}\cdots \sigma_{n+m-2}\sigma^{2}_{n+m-1} \sigma_{n+m-2}\cdots\sigma_{1}$ in $B_{n+m}(\mathbb{T})$ (resp.\ in $B_{n+m}(\mathbb{K})$) by relation~(\ref{it:full8}) of Theorem~\ref{th:total}. Using once more this relation,
and making use of~(\ref{eq:c1j2ja}),
we obtain:
\begin{align*}
\prod^{m}_{i=1}C_{1,n+i}C^{-1}_{2,n+i} &= \left( \prod^{n}_{i=2} C_{1,i} C^{-1}_{2,i} \right)^{-1} \sigma_{1}\cdots \sigma_{n+m-2}\sigma^{2}_{n+m-1} \sigma_{n+m-2}\cdots\sigma_{1}\\
&= \begin{cases}
(\sigma_{1}\cdots \sigma_{n-2}\sigma^{2}_{n-1}\sigma_{n-2} \cdots \sigma_{1})^{-1} bab^{-1}a^{-1} & \text{if $M=\mathbb{T}$}\\
(\sigma_{1}\cdots \sigma_{n-2}\sigma^{2}_{n-1}\sigma_{n-2} \cdots \sigma_{1})^{-1} ba^{-1}b^{-1}a^{-1} & \text{if $M=\mathbb{K}$,}
\end{cases}
\end{align*}
which yields the surface relation~(\ref{it:Bnm1}) of the statement. 
Finally, the relations of Type~III are obtained by conjugating the generators of $B_m(M\setminus\left\{x_1,\ldots,x_n\right\})$ by the coset representatives of the generators of $B_{n}(M)$. Using once more  Remark~\ref{rem:aba1b1},  relations~(\ref{it:Bnm2})--(\ref{it:Bnm5}),~(\ref{it:Bnm8}) and~(\ref{it:Bnm9}) of the statement follow from relations~(\ref{it:puras1}),~(\ref{it:puras2}),~(\ref{it:puras6}),~(\ref{it:puras7}),~(\ref{it:puras3}) and~(\ref{it:puras8}) of Theorem~\ref{th:puras} respectively, and relations~(\ref{it:Bnm6}),~(\ref{it:Bnm7}) and~(\ref{it:Bnm10}) may be obtained geometrically using Figures~\ref{fig:artin} and~\ref{fig:geradores}.
\end{proof}

In order to prove Theorem~\ref{th:FNsplits}, we will make use of the following presentation of the quotient of $B_{n,m}(M)$ by its normal subgroup ${\Gamma_{2}(B_m(M \setminus\left\{x_1,\ldots,x_n\right\}))}$.

\begin{prop}\label{prop:B_{n,m}ab} Let $M$ be the torus or the Klein bottle, and let $m,n\geq 2$.
Then the group  ${B_{n,m}(M)}/{\Gamma_{2}(B_m(M\setminus\left\{x_1,\ldots,x_n\right\}))}$ admits the following presentation:

\noindent
generators: $a,b,x,y,\sigma,\rho_{2},\ldots,\rho_{n},\sigma_{1},\ldots,\sigma_{n-1}$.

\noindent
relations:
\begin{enumerate}[(1)]
\item\label{it:s1} the surface relation $\begin{cases}(\sigma_{1}\cdots \sigma_{n-2} \sigma^{2}_{n-1} \sigma_{n-2} \cdots\sigma_{1})^{-1}bab^{-1}a^{-1}=\rho^{-m}_{2} & \text{if $M=\mathbb{T}$}\\
(\sigma_{1}\cdots \sigma_{n-2} \sigma^{2}_{n-1} \sigma_{n-2} \cdots\sigma_{1})^{-1}ba^{-1}b^{-1}a^{-1}=\rho^{-m}_{2}x^{2m}& \text{if $M=\mathbb{K}$.}\end{cases}$

\item\label{it:s2} the relations~(\ref{it:full1})--(\ref{it:full7}) of $B_{n}(M)$ given by Theorem~\ref{th:total}.

\item\label{it:s3} $\sigma^{2}=1$.
	
\item\label{it:s4}$[x,y]=[a,x]=[x,\rho_{i}]=[y,\rho_{i}]=[a,\rho_{i}]=[b,\rho_{i}]=[x,\sigma_{j}]=[y,\sigma_{j}]=[\rho_{i},\rho_{k}]\linebreak =[\sigma,\sigma_{j}]=[\sigma,x]= [\sigma,y]= [\sigma,\rho_i]=[\sigma,a]=[\sigma,b]=1$, for all $i,k=2,\ldots,n$ and $j=1,\ldots,n-1$.
	
\item\label{it:s5} $a^{-1}ya=\begin{cases} y\rho_{2} & \text{if $M=\mathbb{T}$}\\ 
yx^{-2}\rho_{2} & \text{if $M=\mathbb{K}$.}\end{cases}$
	
\item\label{it:s6} $b^{-1}xb=\begin{cases} x\rho^{-1}_{2}& \text{if $M=\mathbb{T}$}\\ x^{-1}\rho_{2}& \text{if $M=\mathbb{K}$.} \end{cases}$
	
\item\label{it:s7} $b^{-1}yb=\begin{cases} y& \text{if $M=\mathbb{T}$}\\
yx^{2}\rho^{-1}_{2}& \text{if $M=\mathbb{K}$.} \end{cases}$
	
\item\label{it:s8} for all $i=1,\ldots, n-1$ and $j=2,\ldots,n$, $\sigma^{-1}_{i}\rho_{j} \sigma_{i}=\begin{cases} 
\rho_{j-1}\rho^{-1}_{j}\rho_{j+1} & \text{$i+1=j$}\\
\rho_{j} &\text{otherwise,}\end{cases}$ where $\rho_1=1$ (resp.\ $\rho_1=x^2$) if $M=\mathbb{T}$ (resp.\ $M=\mathbb{K}$), and $\rho_{n+1}$ is taken to be equal to $1$.
\end{enumerate}
\end{prop}

\begin{proof}
The result follows by applying the methods of~\cite[Proposition~1, p.~139]{J} to the following short exact sequence:
\begin{equation}\label{eq:sesquot}
1 \to B_m(M\setminus\left\{x_1,\ldots,x_n\right\})\ab \to B_{n,m}(M)/\Gamma_2(B_m(M\setminus \left\{x_1,\ldots,x_n\right\})) \to B_n(M) \to 1
\end{equation}
obtained from~(\ref{eq:gen.seqFN}), and using Propositions~\ref{prop:kernelab} and~\ref{prop:B_{n,m}}. 
Relations~(\ref{it:full1})--(\ref{it:full7}) of $B_{n}(M)$ given by Theorem~\ref{th:total} lift directly to $B_{n,m}(M)/\Gamma_2(B_m(M\setminus \left\{x_1,\ldots,x_n\right\}))$, and the surface relation~(\ref{it:full7}) of $B_{n}(M)$ given by Theorem~\ref{th:total} is a consequence of the surface relation~(\ref{it:Bnm1}) of Proposition~\ref{prop:B_{n,m}}, the proof of Proposition~\ref{prop:kernelab}, and the fact that $\rho_1=1$ (resp.\ $\rho_1=x^2$) if $M=\mathbb{T}$ (resp.\ $M=\mathbb{K}$). This yields relations~(\ref{it:s1}) and~(\ref{it:s2}) of the statement. Relation~(\ref{it:s3}) follows from Proposition~\ref{prop:kernelab}.
Relation~(\ref{it:s4}) of the statement is a consequence of Proposition~\ref{prop:kernelab} and relations~(\ref{it:Bnm2}) and~(\ref{it:Bnm6})--(\ref{it:Bnm10}) of Proposition~\ref{prop:B_{n,m}}, and relations~(\ref{it:s5})--(\ref{it:s8}) follow from relations~(\ref{it:Bnm3})--(\ref{it:Bnm4}) and~(\ref{it:Bnm7}) of Proposition~\ref{prop:B_{n,m}} respectively. For relation~(\ref{it:s8}) in the case $i=n-1$ and $j=n$, the element $C_{n+1,k}$, where $n+1\leq k\leq n+m$, which we take as a representative of $\rho_{n+1}$, is equal to $\sigma_{k-1}\cdots \sigma_{n-2} \sigma_{n-1}^2 \sigma_{n-2}\cdots \sigma_{k-1}$, but in $B_{n,m}(M)/\Gamma_2(B_m(M\setminus\left\{x_1,\ldots,x_n\right\}))$, this is equal to $\sigma^{2(k-n-1)}$, which in turn is equal to $1$ by relation~(\ref{it:s3}). This justifies the convention that $\rho_{n+1}=1$. 
\end{proof}

\begin{remarks}\label{rem:misc}\mbox{}
\begin{enumerate}[(a)]
\item If $m=1$, the presentation of $B_{n,m}(M)/\Gamma_2(B_m(M\setminus \left\{x_1,\ldots, x_n\right\}))$ given by Proposition~\ref{prop:B_{n,m}ab} remains valid provided we take $\sigma=1$.

\item It follows from Proposition~\ref{prop:B_{n,m}ab} that in the group $B_{n,m}(M)/\Gamma_2(B_m(M\setminus\left\{x_1,\ldots,x_n\right\}))$, the element $\sigma$ is central.

\item\label{it:miscc} Using Proposition~\ref{prop:B_{n,m}ab}, one may check that $bxb^{-1}=x\rho_2$ and $aya^{-1}=y\rho_2^{-1}$ (resp.\ $bxb^{-1}=x^{-1}\rho_2$ and $byb^{-1}=aya^{-1}=yx^2\rho_2^{-1}$),
that each of $x$ and $y$ commutes with $bab^{-1}a^{-1}$ (resp.\ with $ba^{-1}b^{-1} a^{-1}$) in the group $B_{n,m}(M)/\Gamma_2(B_m(M\setminus\left\{x_1,\ldots,x_n\right\}))$ if $M=\mathbb{T}$ (resp.\ $M=\mathbb{K}$), and that $\sigma_{i}\rho_{i+1} \sigma_{i}^{-1}= \rho_{i} \rho_{i+1}^{-1} \rho_{i+2}$ for $i=1,\ldots, n-1$.
\end{enumerate}
\end{remarks}

\subsection{A general framework for the existence of a section}\label{sec:contas} 

In this section, we consider a more general framework in which the situations of Theorems~\ref{th:FNsplits} and~\ref{th:split3} may be analysed simultaneously. Let $t,m\in \mathbb N$, $s\geq 0$ and let $n=t+s$. Consider the homomorphism $\map{p_{\ast}}{B_{t,s,m}(M)}[B_{t,s}(M)]$ that geometrically forgets the final block of $m$ strings. Suppose that there exists an algebraic section $\map{\phi}{B_{t,s}(M)}[B_{t,s,m}(M)]$ for $p_{\ast}$. Let $H$ (resp.\ $H'$) be a normal subgroup of $B_{t,s,m}(M)$ (resp.\ of $B_{t,s}(M)$) such that $p_{\ast}(H)=H'$ and $\phi(H')\subset H$. 
Letting $L=B_{m}(M\setminus \{x_{1},\ldots,x_{n}\})\cap H$, we thus have the following commutative diagram of short exact sequences:
\begin{equation}\label{eq:basic}
\begin{tikzcd}[cramped]
& 1 \arrow[d] & 1 \arrow[d] & 1 \arrow[d] &\\
1 \arrow[r] & L \arrow[d, hookrightarrow] \arrow{r} & H \arrow[r, shift left=0.7ex, "p_{\ast}\left\lvert_{H}\right."] \arrow[d, hookrightarrow] & H' \arrow[d, hookrightarrow] \arrow[r] \arrow[l, shift left=0.7ex, "\phi\left\lvert_{H'}\right.", dotted] & 1\\
1 \arrow[r] & B_{m}(M\setminus \{x_{1},\ldots,x_{n}\}) \arrow{r} \arrow[d] & B_{t,s,m}(M) \arrow[r, shift left=0.7ex, "p_{\ast}"] \arrow[d] & B_{t,s}(M) \arrow[d] \arrow[r] \arrow[l, shift left=0.7ex, "\phi", dotted] & 1\\
1 \arrow[r]  & B_{m}(M\setminus \{x_{1},\ldots,x_{n}\})/L \arrow[d] \arrow[r]  & B_{t,s,m}(M)/H \arrow[r, shift left=0.7ex, "\widehat{p}_{\ast}"] \arrow[d] & B_{t,s}(M)/H' \arrow[d] \arrow[r] \arrow[l, shift left=0.7ex, "\widehat{\phi}", dotted] & 1,\\
& 1  & 1 & 1  &
\end{tikzcd}
\end{equation}
where $\map{\widehat{p}_{\ast}}{B_{t,s,m}(M)/H}[B_{t,s}(M)/H']$ (resp.\ $\map{\widehat{\phi}}{B_{t,s}(M)/H'}[B_{t,s,m}(M)/H]$) is the homomorphism induced by $p_{\ast}$ (resp.\ by $\phi$). It follows from exactness and commutativity of~\reqref{basic} that the last row of the diagram splits, more precisely $\phi$ induces a section $\widehat{\phi}$ for $\widehat{p}_{\ast}$.

Let $X=\brak{n+1,\ldots, n+m}$ (resp.\ $X'=\brak{t+1,\ldots, n}$). 
In what follows, we take $B_{t,s}(M)$ to be generated by:
\begin{equation}\label{eq:gensBtsM}
\brak{a_{i},b_{i},\, i\in \brak{1}\cup X'}\cup \brak{C_{i,j}, \, 1\leq i<j, \, j\in X'} \cup \brak{\sigma_{k},\, k\in \brak{1,\ldots, n-1}\setminus \brak{t}}
\end{equation}
and $B_{m}(M\setminus \{x_{1},\ldots,x_{n}\})$ to be generated by:
\begin{equation*}
\brak{a_{i},b_{i},\, i\in X}\cup \brak{C_{i,j}, \, 1\leq i<j, \, j\in X} \cup \brak{\sigma_{k},\, k\in X \setminus\brak{n+m}},
\end{equation*}
so by the middle row of~\reqref{basic}, $B_{t,s,m}(M)$ is generated by the union of these two sets (in the case of the  first set, we take the corresponding coset representatives in $B_{t,s,m}(M)$). By abuse of notation, in what follows we will not distinguish notationally between the given generators of $B_{t,s,m}(M)$ and $B_{t,s}(M)$ and their cosets in the respective quotients $B_{t,s,m}(M)/H$ and $B_{t,s}(M)/H'$. We suppose that $H$ and $H'$ are such that the following relations hold: 
\begin{enumerate}[(I)]
\item\label{eq:relnsa} in $B_{m}(M\setminus \{x_{1},\ldots,x_{n}\})/L$, $a_{i}=a_{n+1}$ and $b_{i}=b_{n+1}$ for all $i\in X$, $C_{i,j}=C_{i,n+1}$ for all $i=1,\ldots, n$ and $j \in X$, $a_{l}$ and $b_{l}$ commute with $C_{i,l}$ for all $i=1,\ldots,n$ and $l\in X'\cup \{n+1\}$,
$b_{n+1}^{-1}a_{n+1}b_{n+1}$ and $b_{n+1}a_{n+1}b_{n+1}^{-1}$ are words in $a_{n+1}$ and $C_{1,n+1}$,  $\sigma_{k}=\sigma_{n+1}$ for all $k\in X \setminus\brak{n+m}$, 
and $\sigma_{n+1}^{2}=1$. 
Let $\sigma=\sigma_{n+1}$ in $B_{m}(M\setminus \{x_{1},\ldots,x_{n}\})/L$. 

\item\label{eq:relnsb} in $B_{t,s,m}(M)/H$ and in $B_{t,s}(M)/H'$, the Artin relations hold among the $\sigma_{k}$ for $k\in \brak{1,\ldots, n-1}\setminus \brak{t}$, $a_{1}$ and $b_{1}$ commute with $\sigma_{l}$ for $l\in X' \setminus \brak{n}$, $a_{i}$ and $a_{j}$ commute for $i,j\in \brak{1}\cup X'$ (or $i,j\in \brak{1}\cup X'\cup X$ in $B_{t,s,m}(M)/H$), and for $k\in \brak{1,\ldots,n-1}\setminus \brak{t}$, and $1\leq l<n+m$:
\begin{equation}\label{rel:sigmaCij}
\sigma_{k}^{-1} C_{l,n+m} \sigma_{k}=\begin{cases}
C_{l,n+m} & \text{if $l\neq k+1$}\\
C_{l-1,n+m} C_{l,n+m}^{-1} C_{l+1,n+m} & \text{if $l= k+1$.}
\end{cases}
\end{equation}
We also have the surface relation:
\begin{equation}\label{eq:surface}
\prod^{d}_{i=1} C_{1,t+i}C^{-1}_{2,t+i} =
\begin{cases}
(\sigma_{1}\cdots\sigma^{2}_{t-1}\cdots \sigma_{1})^{-1}b_{1}a_{1}b^{-1}_{1}a^{-1}_{1} & \text{if $M=\mathbb{T}$}\\
(\sigma_{1}\cdots\sigma^{2}_{t-1}\cdots\sigma_{1})^{-1}b_{1}a^{-1}_{1}b^{-1}_{1}a^{-1}_{1} & \text{if $M=\mathbb{K}$,}
\end{cases}
\end{equation}
where $d=s$ (resp.\ $d=s+m$) in $B_{t,s}(M)/H'$ (resp.\ in $B_{t,s,m}(M)/H$).
   
\item\label{eq:relnsc} in $B_{t,s,m}(M)/H$, $\sigma$ is central.

\item\label{eq:relnsd} in $B_{t,s,m}(M)/H$, for all $j=1,\ldots,n$, $k\in X'$ and $1\leq i<k$, $C_{j,n+1}$ commutes with $C_{i,k}$.

\end{enumerate}

\begin{remark}\label{rem:nova}
Let $\rho=C_{1,n+1}^{-1} C_{2,n+1}$. Since $a_{j}$ and $b_{j}$ commute with $C_{i,j}$
in $B_{t,s,m}(M)/L$ for all $i=1,\ldots, n$ and $j\in X'\cup\{n+1\}$, it follows from relations~(\ref{it:puras2}),~(\ref{it:puras6}),~(\ref{it:puras7}) and~(\ref{it:puras8}) of $P_{n+m}(M)$ (considered as a subgroup of $B_{t,s,m}(M)$) of Theorem~\ref{th:puras} that the following relations hold in $B_{t,s,m}(M)/L$:
\begin{enumerate}[(i)]
\item\label{it:rn1}  $a_{1} b_{n+1} a_{1}^{-1}= b_{n+1} \rho^{-1}$ and $a_{1}^{-1} b_{n+1} a_{1}= b_{n+1}\rho$.
\item\label{it:rn2} $b_{1} a_{n+1} b_{1}^{-1}= a_{n+1} \rho$ and $b_{1}^{-1} a_{n+1} b_{1}=\begin{cases} a_{n+1} \rho^{-1} & \text{if $M=\mathbb{T}$}\\ a_{n+1}\rho & \text{if $M=\mathbb{K}$.}\end{cases}$ 
\item\label{it:rn3} $b_{1}^{-1} b_{n+1} b_{1}=\begin{cases} b_{n+1}& \text{if $M=\mathbb{T}$}\\ b_{1} b_{n+1} b_{1}^{-1} = b_{n+1}\rho^{-1}& \text{if $M=\mathbb{K}$.}\end{cases}$
\item\label{it:rn4} for $j\in X'\cup\{n+1\}$, $C_{1,j}C^{-1}_{2,j}=\begin{cases}a^{-1}_{j}b^{-1}_{1}a_{j}b_{1} & \text{if $M=\mathbb{T}$}\\(a^{-1}_{j}b^{-1}_{1}a_{j}b_{1})^{-1} & \text{if $M=\mathbb{K}$.}\end{cases}$ 
\end{enumerate}
\end{remark}

For the homomorphism $\map{\widehat{\phi}}{B_{t,s}(M)/H'}[B_{t,s,m}(M)/H]$ of~\reqref{basic}, we set: 
\begin{equation}\label{eq:phi1a} 
\begin{cases}
\widehat{\phi}(\sigma_{i})= \sigma_{i}\cdot a^{s_{i,1}}_{n+1} b^{s_{i,2}}_{n+1} \sigma^{s_{i,3}} C^{r_{i,1}}_{1,n+1}\cdots C^{r_{i,n}}_{n,n+1} & \text{for $i=1,\ldots,t-1,t+1,\ldots n-1$}\\
\widehat{\phi}(a_{i})= a_{i}\cdot a^{\alpha_{i,1}}_{n+1} b^{\alpha_{i,2}}_{n+1} \sigma^{\alpha_{i,3}} C^{x_{i,1}}_{1,n+1} \cdots C^{x_{i,n}}_{n,n+1} & \text{for $i=1,t+1,t+2,\ldots,n$}\\
\widehat{\phi}(b_{i})= b_{i}\cdot a^{\beta_{i,1}}_{n+1} b^{\beta_{i,2}}_{n+1} \sigma^{\beta_{i,3}} C^{y_{i,1}}_{1,n+1} \cdots C^{y_{i,n}}_{n,n+1} & \text{for $i=1,t+1,t+2,\ldots,n$,}
\end{cases}
\end{equation}
where $s_{i,j},r_{i,j},\alpha_{i,j},\beta_{i,j},x_{i,j}, y_{i,j} \in \mathbb{Z}$ for the relevant values of $i$ and $j$. 
If $w\in B_{t,s}(M)/H'$ is written in terms of (the coset representatives of) the generators of~\reqref{gensBtsM} then the element of $B_{t,s,m}(M)/H$ written in terms of the corresponding generators, that we also denote by $w$, satisfies $\widehat{\phi}(w)=wz$, where $z \in \ker{\widehat{p}_{\ast}}$. If $z$ is written in terms of the generators of $B_m(M\setminus\left\{x_1,\ldots,x_n\right\})/L$, the decomposition $\widehat{\phi}(w)=wz$ shall be referred to as the \emph{canonical form} of $\widehat{\phi}(w)$.

We will now take the image by $\widehat{\phi}$ of some of the relations of Theorem~\ref{th:total} to obtain relations in $B_{t,s,m}(M)/H$. This will enable us to obtain information about the coefficients that appear in~(\ref{eq:phi1a}) above. 


We first compute $\widehat{\phi}((\sigma_{1}\cdots \sigma_{t-2}\sigma_{t-1}^{2} \sigma_{t-2}\cdots \sigma_{1})^{-1})$ and put the resulting expression into canonical form. We start by analysing the expression $\widehat{\phi}(\sigma_{1}\cdots \sigma_{t-2}\sigma_{t-1})$. Using the fact that for $1\leq i\leq n$, $C_{i,n+1}$ commutes with $a_{n+1}$ and $b_{n+1}$,
and that $\sigma_{i}$ commutes with $C_{j,n+1}$ for all $1\leq i\leq t-1$ and $1\leq j\leq n$ for which $j\neq i+1$, we obtain:
\begin{equation}\label{eq:wu}
\widehat{\phi}(\sigma_{1}\cdots \sigma_{t-2}\sigma_{t-1}) = \prod_{i=1}^{t-1} \sigma_{i}\cdot a^{s_{i,1}}_{n+1} b^{s_{i,2}}_{n+1} \sigma^{s_{i,3}} C^{r_{i,1}}_{1,n+1}\cdots C^{r_{i,n}}_{n,n+1}= wu,
\end{equation}
where:
\begin{align}
w&= \sigma^{\sum_{i=1}^{t-1} s_{i,3}} C_{1,n+1}^{\sum_{i=1}^{t-1} r_{i,1}} \prod_{k=3}^{t} C_{k,n+1}^{\sum_{i=1}^{k-2} r_{i,k}} \prod_{k=t+1}^{n} C_{k,n+1}^{\sum_{l=1}^{t-1} r_{l,k}} \prod_{i=1}^{t-1} a^{s_{i,1}}_{n+1} b^{s_{i,2}}_{n+1},\; \text{and}\label{eq:defw}\\
u&=\prod_{k=1}^{t-1} \sigma_{k} C_{k+1,n+1}^{\alpha_{k}},\, \text{where $\alpha_{k}= \sum_{i=k}^{t-1} r_{i,k+1}$.}\notag
\end{align}
If $\alpha\in \mathbb Z$, let us show by induction on $1\leq i\leq t-1$ that:
\begin{equation}\label{eq:indprodsigmai}
\sigma_{1}\cdots \sigma_{i} C_{i+1,n+1}^{\alpha}= C_{1,n+1}^{\alpha} C_{2,n+1}^{-\alpha} C_{i+2,n+1}^{\alpha} \sigma_{1}\cdots \sigma_{i}. 
\end{equation}
If $i=1$ then the result follows from the relation:
\begin{equation}\label{eq:conjsigma}
\text{$\sigma_{i} C_{i+1,n+1}^{\alpha} \sigma_{i}^{-1}= C_{i,n+1}^{\alpha} C_{i+1,n+1}^{-\alpha} C_{i+2,n+1}^{\alpha}$, where $1\leq i\leq t-1$.}
\end{equation}
So suppose that~\reqref{indprodsigmai} holds for some $1\leq i\leq t-2$. Then by~\reqref{conjsigma} and the induction hypothesis, we have:
\begin{equation*}
\sigma_{1}\cdots \sigma_{i+1} C_{i+2,n+1}^{\alpha} = \sigma_{1}\cdots \sigma_{i} C_{i+1,n+1}^{\alpha} C_{i+2,n+1}^{-\alpha} C_{i+3,n+1}^{\alpha} \sigma_{i+1} = C_{1,n+1}^{\alpha} C_{2,n+1}^{-\alpha} C_{i+3,n+1}^{\alpha} \sigma_{1}\cdots \sigma_{i} \sigma_{i+1},
\end{equation*}
which proves~\reqref{indprodsigmai} for all $1\leq i\leq t-1$.

Let us prove by induction on $1\leq i\leq t-1$ that:
\begin{equation}\label{eq:indu}
u=C_{1,n+1}^{\sum_{l=1}^{i-1} \alpha_{l}} C_{2,n+1}^{-\sum_{l=1}^{i-1} \alpha_{l}} \left(\prod_{j=3}^{i+1} C_{j,n+1}^{\alpha_{j-2}} \right)\sigma_{1}\cdots \sigma_{i} C_{i+1,n+1}^{\alpha_{i}} \prod_{k=i+1}^{t-1} \sigma_{k} C_{k+1,n+1}^{\alpha_{k}}.
\end{equation}
If $i=1$ then~\reqref{indu} is just the definition of $u$. So suppose that~\reqref{indu} holds for some $1\leq i \leq t-2$. By~\reqref{indprodsigmai}, we have: 
\begin{align*}
u & =C_{1,n+1}^{\sum_{l=1}^{i-1} \alpha_{l}} C_{2,n+1}^{-\sum_{l=1}^{i-1} \alpha_{l}} \left(\prod_{j=3}^{i+1} C_{j,n+1}^{\alpha_{j-2}} \right) C_{1,n+1}^{\alpha_{i}} C_{2,n+1}^{-\alpha_{i}} C_{i+2,n+1}^{\alpha_{i}} \sigma_{1}\cdots \sigma_{i}  \prod_{k=i+1}^{t-1} \sigma_{k} C_{k+1,n+1}^{\alpha_{k}}\\
&= C_{1,n+1}^{\sum_{l=1}^{i} \alpha_{l}} C_{2,n+1}^{-\sum_{l=1}^{i} \alpha_{l}} \left(\prod_{j=3}^{i+2} C_{j,n+1}^{\alpha_{j-2}} \right) \sigma_{1}\cdots \sigma_{i+1} C_{i+2,n+1}^{\alpha_{i+1}} \prod_{k=i+2}^{t-1} \sigma_{k} C_{k+1,n+1}^{\alpha_{k}},
\end{align*}
which is \req{indu} in the case $i+1$. Taking $i=t-1$ in~\reqref{indu} and using~\reqref{indprodsigmai}, we obtain:
\begin{equation}\label{eq:indu2}
u=C_{1,n+1}^{\sum_{l=1}^{t-2} \alpha_{l}} C_{2,n+1}^{-\sum_{l=1}^{t-2} \alpha_{l}} \left(\prod_{j=3}^{t} C_{j,n+1}^{\alpha_{j-2}} \right)\sigma_{1}\cdots \sigma_{t-1} C_{t,n+1}^{\alpha_{t-1}}= v\sigma_{1}\cdots \sigma_{t-1},
\end{equation}
where:
\begin{equation}\label{eq:defv}
v=C_{1,n+1}^{\sum_{l=1}^{t-1} \alpha_{l}} C_{2,n+1}^{-\sum_{l=1}^{t-1} \alpha_{l}} \left(\prod_{j=3}^{t+1} C_{j,n+1}^{\alpha_{j-2}} \right).
\end{equation}

We now analyse the expression $\widehat{\phi}(\sigma_{t-1}\sigma_{t-2}\cdots \sigma_{1})$. Using the fact that for $1\leq i\leq n$, $C_{i,n+1}$ commutes with $a_{n+1}$ and $b_{n+1}$,
that $\sigma$ is central, and that $\sigma_{i}$ commutes with $C_{j,n+1}$ for all $1\leq i\leq t-1$, $1\leq j\leq n$ for which $j\neq i+1$, we obtain:
\begin{equation}\label{eq:wpup}
\widehat{\phi}(\sigma_{t-1} \sigma_{t-2} \cdots \sigma_{1}) = \prod_{i=1}^{t-1} \sigma_{t-i}\cdot a^{s_{t-i,1}}_{n+1} b^{s_{t-i,2}}_{n+1} \sigma^{s_{t-i,3}} C^{r_{t-i,1}}_{1,n+1}\cdots C^{r_{t-i,n}}_{n,n+1}= w'u',
\end{equation}
where:
\begin{equation}\label{eq:defwprime}
w' = \sigma^{\sum_{i=1}^{t-1} s_{t-i,3}} \prod_{k=1}^{t-1} C_{k,n+1}^{\sum_{i=k}^{t-1} r_{i,k}} \prod_{k=t+1}^{n} C_{k,n+1}^{\sum_{l=1}^{t-1} r_{l,k}} \prod_{i=1}^{t-1} a^{s_{t-i,1}}_{n+1} b^{s_{t-i,2}}_{n+1}\,\text{and}\, u' =\prod_{k=1}^{t-1} \sigma_{t-k} C_{t-k+1,n+1}^{\beta_{t-k}},
\end{equation}
where $\beta_{k}= \sum_{i=1}^{k} r_{i,k+1}$ for $k=1,\ldots,t-1$. If $\alpha\in \mathbb Z$, let us show by induction on $1\leq i\leq t-1$ that:
\begin{equation}\label{eq:indprodsigmaii}
\sigma_{t-1}\cdots \sigma_{t-i} C_{t-i+1,n+1}^{\alpha}= 
C_{t+1,n+1}^{\alpha} C_{t,n+1}^{-\alpha} C_{t-i,n+1}^{\alpha} \sigma_{t-1}\cdots \sigma_{t-i}. 
\end{equation}
If $i=1$ then the result follows from~\reqref{conjsigma}. So suppose that~\reqref{indprodsigmaii} holds for some $1\leq i\leq t-2$. Then by~\reqref{conjsigma} and the induction hypothesis, we have:
\begin{align*}
\sigma_{t-1}\cdots \sigma_{t-i} \sigma_{t-i-1} C_{t-i,n+1}^{\alpha} &= \sigma_{t-1}\cdots \sigma_{t-i}  C_{t-i+1,n+1}^{\alpha} C_{t-i,n+1}^{-\alpha} C_{t-i-1,n+1}^{\alpha} \sigma_{t-i-1}\\
& = C_{t+1,n+1}^{\alpha} C_{t,n+1}^{-\alpha} C_{t-i-1,n+1}^{\alpha} \sigma_{t-1}\cdots \sigma_{t-i} \sigma_{t-i-1},
\end{align*}
which proves~\reqref{indprodsigmaii} for all $1\leq i\leq t-1$. Let us prove by induction on $1\leq i\leq t-1$ that:
\begin{equation}\label{eq:indu3}
u'=C_{t+1,n+1}^{\sum_{l=t-i+1}^{t-1} \beta_{l}} C_{t,n+1}^{-\sum_{l=t-i+1}^{t-1} \beta_{l}} 
\left( \prod_{j=t-i+1}^{t-1} C_{j,n+1}^{\beta_{j}} \right) \sigma_{t-1}\cdots \sigma_{t-i} C_{t-i+1,n+1}^{\beta_{t-i}} \prod_{k=i+1}^{t-1} \sigma_{t-k} C_{t-k+1,n+1}^{\beta_{t-k}}.
\end{equation}
If $i=1$ then~\reqref{indu3} is just the definition of $u'$. So suppose that~\reqref{indu2} holds for some $1\leq i \leq t-2$. By~\reqref{indprodsigmaii}, we have: 
\begin{align*}
u' &=C_{t+1,n+1}^{\sum_{l=t-i+1}^{t-1} \beta_{l}} C_{t,n+1}^{-\sum_{l=t-i+1}^{t-1} \beta_{l}} \left( \prod_{j=t-i+1}^{t-1} C_{j,n+1}^{\beta_{j}} \right) C_{t+1,n+1}^{\beta_{t-i}} C_{t,n+1}^{-\beta_{t-i}} C_{t-i,n+1}^{\beta_{t-i}} \sigma_{t-1}\cdots \sigma_{t-i} \prod_{k=i+1}^{t-1} \sigma_{t-k} C_{t-k+1,n+1}^{\beta_{t-k}}\\
&=C_{t+1,n+1}^{\sum_{l=t-i}^{t-1} \beta_{l}} C_{t,n+1}^{-\sum_{l=t-i}^{t-1} \beta_{l}} \left( \prod_{j=t-i}^{t-1} C_{j,n+1}^{\beta_{j}} \right) \sigma_{t-1}\cdots \sigma_{t-i} \sigma_{t-i-1} C_{t-i,n+1}^{\beta_{t-i-1}} \prod_{k=i+2}^{t-1} \sigma_{t-k} C_{t-k+1,n+1}^{\beta_{t-k}},
\end{align*}
which is \req{indu3} in the case $i+1$. Taking $i=t-1$ in~\reqref{indu3} and using~\reqref{indprodsigmaii}, we see that:
\begin{equation}\label{eq:uprime}
u'= C_{t+1,n+1}^{\sum_{l=2}^{t-1} \beta_{l}} C_{t,n+1}^{-\sum_{l=2}^{t-1} \beta_{l}} 
\left( \prod_{j=2}^{t-1} C_{j,n+1}^{\beta_{j}} \right) \sigma_{t-1}\cdots \sigma_{1} C_{2,n+1}^{\beta_{1}}= v' \sigma_{t-1}\cdots \sigma_{1},
\end{equation}
where:
\begin{equation}\label{eq:defvprime}
v'=C_{t+1,n+1}^{\sum_{l=1}^{t-1} \beta_{l}} C_{t,n+1}^{-\sum_{l=1}^{t-1} \beta_{l}} 
\prod_{j=1}^{t-1} C_{j,n+1}^{\beta_{j}}.
\end{equation} 
So by~\reqref{wu},~\reqref{indu2},~\reqref{wpup} and~\reqref{uprime}, we obtain:
\begin{equation}\label{eq:fullphi}
\widehat{\phi}((\sigma_{1}\cdots \sigma_{t-2}\sigma_{t-1}^{2} \sigma_{t-2}\cdots \sigma_{1})^{-1})= \sigma_{1}^{-1}\cdots \sigma_{t-1}^{-1} \omega^{-1} v^{-1} w^{-1},
\end{equation} 
where $\omega= \sigma_{1}\cdots \sigma_{t-1} w'v'$. Let $z= \sigma^{\sum_{i=1}^{t-1} s_{t-i,3}} C_{t+1,n+1}^{\sum_{l=1}^{t-1} \beta_{l}+ \sum_{l=1}^{t-1} r_{l,t+1}} \prod_{k=t+2}^{n} C_{k,n+1}^{\sum_{l=1}^{t-1} r_{l,k}} \prod_{i=1}^{t-1} a^{s_{t-i,1}}_{n+1} b^{s_{t-i,2}}_{n+1}$, and for $k=1,\ldots,t-1$, let $\gamma_{k}= \beta_{k}+\sum_{i=k}^{t-1} r_{i,k}$. Then by~\reqref{indprodsigmai},~\reqref{defwprime} and~\reqref{defvprime}, we have:
\begin{align}
\omega &= z \sigma_{1}\cdots \sigma_{t-1} C_{t,n+1}^{-\sum_{l=1}^{t-1} \beta_{l}} \prod_{k=1}^{t-1} C_{k,n+1}^{\gamma_{k}}\notag\\
&= z  (C_{1,n+1} C_{2,n+1}^{-1} C_{t+1,n+1})^{-\sum_{l=1}^{t-1} \beta_{l}} C_{1,n+1}^{\gamma_{1}} \prod_{k=2}^{t-1} (C_{1,n+1} C_{2,n+1}^{-1} C_{k+1,n+1})^{\gamma_{k}} \sigma_{1}\cdots \sigma_{t-1}.\label{eq:imwpvp}
\end{align}
The coefficient of $C_{2,n+1}$ in $(\sigma_{1}\cdots \sigma_{t-1} w'v')^{-1}$ is thus equal to:
\begin{equation*}
\sum_{l=2}^{t-1} \gamma_{l}-\sum_{l=1}^{t-1} \beta_{l}= \sum_{l=2}^{t-1} \left(\beta_{l}+\sum_{i=l}^{t-1} r_{i,l}\right)-\sum_{l=1}^{t-1} \beta_{l}= \sum_{l=2}^{t-1} \sum_{i=l}^{t-1} r_{i,l}-\beta_{1} = \sum_{l=2}^{t-1} \sum_{i=l}^{t-1} r_{i,l}-r_{1,2}.
\end{equation*}
Combining~\reqref{defw},~\reqref{defv},~\reqref{fullphi} and~\reqref{imwpvp}, and making use of the relation $\sigma^{2}=1$ and the fact that $b_{n+1}a_{n+1}b_{n+1}^{-1}$ is a word in $a_{n+1}$ and $C_{1,n+1}$, 
it follows that:
\begin{equation}\label{eq:relny}
\widehat{\phi}((\sigma_{1}\cdots \sigma_{t-2}\sigma_{t-1}^{2} \sigma_{t-2}\cdots \sigma_{1})^{-1}) = (\sigma_{1}\cdots \sigma_{t-2}\sigma_{t-1}^{2} \sigma_{t-2}\cdots \sigma_{1})^{-1} C_{2,n+1}^{\alpha} \xi,
\end{equation}
where 
$\xi= \xi(C_{1,n+1},C_{3,n+1},\ldots, C_{n,n+1}, a_{n+1},b_{n+1})$ is in canonical form, and $\alpha= \sum_{l=1}^{t-1} \sum_{i=l}^{t-1} r_{i,l+1} +\sum_{l=2}^{t-1} \sum_{i=l}^{t-1} r_{i,l}-r_{1,2}$. Note that $\alpha$ can be simplified:
\begin{align}
\alpha &= \sum_{l=1}^{t-1} \sum_{i=l}^{t-1} r_{i,l+1} +\sum_{l=2}^{t-1} \sum_{i=l}^{t-1} r_{i,l}-r_{1,2}= \sum_{l=2}^{t-1} \sum_{i=l-1}^{t-1} r_{i,l} +\sum_{l=2}^{t-1} \sum_{i=l}^{t-1} r_{i,l}-r_{1,2}\notag\\
&= 2 \sum_{l=2}^{t-1} \sum_{i=l}^{t-1} r_{i,l}+ \sum_{l=2}^{t-1} r_{l-1,l}-r_{1,2}= 2 \sum_{l=2}^{t-1} \sum_{i=l}^{t-1} r_{i,l}+ \sum_{l=2}^{t-1} r_{l,l+1}.\label{eq:defalpha}
\end{align}

We now determine $\widehat{\phi}(R)$, where $R=b_{1}a_{1} b_{1}^{-1}a_{1}^{-1}$ (resp.\ $R=b_{1}a_{1}^{-1} b_{1}^{-1}a_{1}^{-1}$).

If $M=\mathbb{T}$, we have:
\begin{align*}
\widehat{\phi}(R) =& b_{1} a^{\beta_{1,1}}_{n+1} b^{\beta_{1,2}}_{n+1} \sigma^{\beta_{1,3}} C^{y_{1,1}}_{1,n+1} \cdots C^{y_{1,n}}_{n,n+1} a_{1} a^{\alpha_{1,1}}_{n+1} b^{\alpha_{1,2}}_{n+1} \sigma^{\alpha_{1,3}} C^{x_{1,1}}_{1,n+1} \cdots C^{x_{1,n}}_{n,n+1}\cdot\notag\\
& \sigma^{-\beta_{1,3}} C_{1,n+1}^{-y_{1,1}} \cdots C_{n,n+1}^{-y_{1,n}} b_{n+1}^{-\beta_{1,2}}  a_{n+1}^{-\beta_{1,1}} b_{1}^{-1} \sigma^{-\alpha_{1,3}} C_{1,n+1}^{-x_{1,1}} \cdots C_{n,n+1}^{-x_{1,n}} b_{n+1}^{-\alpha_{1,2}} a_{n+1}^{-\alpha_{1,1}} a_{1}^{-1}\notag\\
=& b_{1} a_{1} a^{\beta_{1,1}}_{n+1} b^{\beta_{1,2}}_{n+1} a^{\alpha_{1,1}}_{n+1} b_{n+1}^{\alpha_{1,2}-\beta_{1,2}} a_{n+1}^{-\beta_{1,1}} \sigma^{\alpha_{1,3}} C_{1,n+1}^{-\beta_{1,2}+x_{1,1}} C_{2,n+1}^{\beta_{1,2}+x_{1,2}} C_{3,n+1}^{x_{1,3}}  \cdots C_{n,n+1}^{x_{1,n}} 
b_{1}^{-1}\cdot\notag\\
& \sigma^{-\alpha_{1,3}} C_{1,n+1}^{-x_{1,1}} \cdots C_{n,n+1}^{-x_{1,n}} b_{n+1}^{-\alpha_{1,2}} a_{n+1}^{-\alpha_{1,1}} a_{1}^{-1}\notag\\
=& b_{1} a_{1} b_{1}^{-1} a^{\beta_{1,1}}_{n+1} b^{\beta_{1,2}}_{n+1} a^{\alpha_{1,1}}_{n+1} b_{n+1}^{\alpha_{1,2}-\beta_{1,2}} a_{n+1}^{-\beta_{1,1}} b_{n+1}^{-\alpha_{1,2}} a_{n+1}^{-\alpha_{1,1}} C_{1,n+1}^{-\beta_{1,2}-\alpha_{1,1}} C_{2,n+1}^{\beta_{1,2}+\alpha_{1,1}} a_{1}^{-1}\notag\\
=& b_{1} a_{1} b_{1}^{-1} a_{1}^{-1}  a^{\beta_{1,1}}_{n+1} b^{\beta_{1,2}}_{n+1} a^{\alpha_{1,1}}_{n+1} b_{n+1}^{\alpha_{1,2}-\beta_{1,2}} a_{n+1}^{-\beta_{1,1}} b_{n+1}^{-\alpha_{1,2}} a_{n+1}^{-\alpha_{1,1}} C_{1,n+1}^{-\beta_{1,2}-\alpha_{1,1}} C_{2,n+1}^{\beta_{1,2}+\alpha_{1,1}}.
\end{align*}
If $M=\mathbb{K}$, we have:
\begin{align*}
\widehat{\phi}(R) =& b_{1} a^{\beta_{1,1}}_{n+1} b^{\beta_{1,2}}_{n+1} \sigma^{\beta_{1,3}} C^{y_{1,1}}_{1,n+1} \cdots C^{y_{1,n}}_{n,n+1} \sigma^{-\alpha_{1,3}} C_{1,n+1}^{-x_{1,1}} \cdots C_{n,n+1}^{-x_{1,n}} b_{n+1}^{-\alpha_{1,2}} a_{n+1}^{-\alpha_{1,1}} a_{1}^{-1}\cdot\notag\\
& \sigma^{-\beta_{1,3}} C_{1,n+1}^{-y_{1,1}} \cdots C_{n,n+1}^{-y_{1,n}} b_{n+1}^{-\beta_{1,2}}  a_{n+1}^{-\beta_{1,1}} b_{1}^{-1} \sigma^{-\alpha_{1,3}} C_{1,n+1}^{-x_{1,1}} \cdots C_{n,n+1}^{-x_{1,n}} b_{n+1}^{-\alpha_{1,2}} a_{n+1}^{-\alpha_{1,1}} a_{1}^{-1}\notag\\
=& b_{1} a_{1}^{-1} a^{\beta_{1,1}}_{n+1} b^{\beta_{1,2}-\alpha_{1,2}}_{n+1} a_{n+1}^{-\alpha_{1,1}} b_{n+1}^{-\beta_{1,2}}  a_{n+1}^{-\beta_{1,1}} \sigma^{-\alpha_{1,3}} C_{1,n+1}^{-x_{1,1}-\alpha_{1,2}+\beta_{1,2}} C_{2,n+1}^{-x_{1,2}+\alpha_{1,2}-\beta_{1,2}} C_{3,n+1}^{-x_{1,3}}  \cdots C_{n,n+1}^{-x_{1,n}} b_{1}^{-1}\cdot\notag\\
& \sigma^{-\alpha_{1,3}} C_{1,n+1}^{-x_{1,1}} \cdots C_{n,n+1}^{-x_{1,n}} b_{n+1}^{-\alpha_{1,2}} a_{n+1}^{-\alpha_{1,1}} a_{1}^{-1}\notag\\
=& b_{1} a_{1}^{-1} b_{1}^{-1} a^{\beta_{1,1}}_{n+1} b^{\beta_{1,2}-\alpha_{1,2}}_{n+1} a_{n+1}^{-\alpha_{1,1}} b_{n+1}^{-\beta_{1,2}}  a_{n+1}^{-\beta_{1,1}} b_{n+1}^{-\alpha_{1,2}} a_{n+1}^{-\alpha_{1,1}}\sigma^{-2\alpha_{1,3}} C_{1,n+1}^{\alpha_{1,1}-\beta_{1,2}} C_{2,n+1}^{-2(x_{1,1}+x_{1,2})-\alpha_{1,1}+\beta_{1,2}}\cdot\notag\\
& C_{3,n+1}^{-2x_{1,3}}  \cdots C_{n,n+1}^{-2x_{1,n}} a_{1}^{-1}\notag\\
=& b_{1} a_{1}^{-1} b_{1}^{-1} a_{1}^{-1} a^{\beta_{1,1}}_{n+1} b^{\beta_{1,2}-\alpha_{1,2}}_{n+1} a_{n+1}^{-\alpha_{1,1}} b_{n+1}^{-\beta_{1,2}}  a_{n+1}^{-\beta_{1,1}} b_{n+1}^{-\alpha_{1,2}} a_{n+1}^{-\alpha_{1,1}}\sigma^{-2\alpha_{1,3}} C_{1,n+1}^{\alpha_{1,1}-\beta_{1,2}-2\alpha_{1,2}}\cdot\notag\\
& C_{2,n+1}^{2(\alpha_{1,2}-x_{1,1}-x_{1,2})-\alpha_{1,1}+\beta_{1,2}} C_{3,n+1}^{-2x_{1,3}}  \cdots C_{n,n+1}^{-2x_{1,n}}.
\end{align*}
So if $M=\mathbb{T}$ or $\mathbb{K}$ then:
\begin{equation}\label{eq:relnw}
\widehat{\phi}(R) =R C_{2,n+1}^{\delta} w,
\end{equation}
where:
\begin{equation}\label{eq:defdelta}
\delta=\begin{cases}
\beta_{1,2}+\alpha_{1,1} & \text{if $M=\mathbb{T}$}\\
2(\alpha_{1,2}-x_{1,1}-x_{1,2})-\alpha_{1,1}+\beta_{1,2} & \text{if $M=\mathbb{K}$,}
\end{cases}
\end{equation}
and $w=w(C_{1,n+1},C_{3,n+1},\ldots, C_{n,n+1}, a_{n+1},b_{n+1})$ is in canonical form. 

We now determine $\widehat{\phi}(\prod_{j=1}^{s} C_{1,t+j} C_{2,t+j}^{-1})$. If $M=\mathbb{T}$ (resp.\ $M=\mathbb{K}$), by Remark~\ref{rem:nova}(\ref{it:rn4}) and $i=t+1,\ldots,n$, $\widehat{\phi}(C_{1,i} C_{2,i}^{-1})=\widehat{\phi}(a_{i}^{-1} b_{1}^{-1} a_{i} b_{1})$ (resp.\ $\widehat{\phi}(C_{1,i} C_{2,i}^{-1})=\widehat{\phi}((a_{i}^{-1} b_{1}^{-1} a_{i} b_{1})^{-1})$). Let $i=t+1,\ldots,n$. Then $a_{i}^{-1} b_{n+1}a_{i}= b_{n+1} C_{i,n+1}^{-1} C_{i+1,n+1}$ and $a_{i} b_{n+1}a_{i}^{-1}= b_{n+1} C_{i,n+1} C_{i+1,n+1}^{-1}$. If $M=\mathbb{T}$ then:
\begin{align*}
\widehat{\phi}(C_{1,i} C_{2,i}^{-1})=& \sigma^{-\alpha_{i,3}} C_{1,n+1}^{-x_{i,1}} \cdots C_{n,n+1}^{-x_{i,n}} b_{n+1}^{-\alpha_{i,2}} a_{n+1}^{-\alpha_{i,1}} a_{i}^{-1} \sigma^{-\beta_{1,3}} C_{1,n+1}^{-y_{1,1}} \cdots C_{n,n+1}^{-y_{1,n}} b_{n+1}^{-\beta_{1,2}}  a_{n+1}^{-\beta_{1,1}} b_{1}^{-1}\cdot\\
& a_{i} a^{\alpha_{i,1}}_{n+1} b^{\alpha_{i,2}}_{n+1} \sigma^{\alpha_{i,3}} C^{x_{i,1}}_{1,n+1} \cdots C^{x_{i,n}}_{n,n+1} b_{1} a^{\beta_{1,1}}_{n+1} b^{\beta_{1,2}}_{n+1} \sigma^{\beta_{1,3}} C^{y_{1,1}}_{1,n+1} \cdots C^{y_{1,n}}_{n,n+1}\\
=& a_{i}^{-1} \sigma^{-\alpha_{i,3}-\beta_{1,3}} b_{n+1}^{-\alpha_{i,2}} a_{n+1}^{-\alpha_{i,1}} b_{n+1}^{-\beta_{1,2}}  a_{n+1}^{-\beta_{1,1}} C_{1,n+1}^{-x_{i,1}-y_{1,1}} \cdots C_{i-1,n+1}^{-x_{i,i-1}-y_{1,i-1}} C_{i,n+1}^{-x_{i,i}-y_{1,i}-\alpha_{i,2}}\cdot\\ 
& C_{i+1,n+1}^{-x_{i,i+1}-y_{1,i+1}+\alpha_{i,2}} C_{i+2,n+1}^{-x_{i,i+2}-y_{1,i+2}} \cdots C_{n,n+1}^{-x_{i,n}-y_{1,n}} b_{1}^{-1}\cdot\\
& a_{i} a^{\alpha_{i,1}}_{n+1} b^{\alpha_{i,2}}_{n+1} \sigma^{\alpha_{i,3}} C^{x_{i,1}}_{1,n+1} \cdots C^{x_{i,n}}_{n,n+1} b_{1} a^{\beta_{1,1}}_{n+1} b^{\beta_{1,2}}_{n+1} \sigma^{\beta_{1,3}} C^{y_{1,1}}_{1,n+1} \cdots C^{y_{1,n}}_{n,n+1}\\
=& a_{i}^{-1} b_{1}^{-1} \sigma^{-\alpha_{i,3}-\beta_{1,3}} b_{n+1}^{-\alpha_{i,2}} a_{n+1}^{-\alpha_{i,1}} b_{n+1}^{-\beta_{1,2}}  a_{n+1}^{-\beta_{1,1}} C_{1,n+1}^{-x_{i,1}-y_{1,1}+\alpha_{i,1}+\beta_{1,1}} C_{2,n+1}^{-x_{i,2}-y_{1,2}-\alpha_{i,1}-\beta_{1,1}}\cdot\\
& C_{3,n+1}^{-x_{i,3}-y_{1,3}} \cdots C_{i-1,n+1}^{-x_{i,i-1}-y_{1,i-1}} C_{i,n+1}^{-x_{i,i}-y_{1,i}-\alpha_{i,2}} C_{i+1,n+1}^{-x_{i,i+1}-y_{1,i+1}+\alpha_{i,2}} \cdot\\
& C_{i+2,n+1}^{-x_{i,i+2}-y_{1,i+2}} \cdots C_{n,n+1}^{-x_{i,n}-y_{1,n}} 
a_{i} a^{\alpha_{i,1}}_{n+1} b^{\alpha_{i,2}}_{n+1} \sigma^{\alpha_{i,3}} C^{x_{i,1}}_{1,n+1} \cdots C^{x_{i,n}}_{n,n+1} b_{1} a^{\beta_{1,1}}_{n+1} b^{\beta_{1,2}}_{n+1} \sigma^{\beta_{1,3}}\cdot\\
& C^{y_{1,1}}_{1,n+1} \cdots C^{y_{1,n}}_{n,n+1}\\
=& a_{i}^{-1} b_{1}^{-1} a_{i} \sigma^{-\beta_{1,3}} b_{n+1}^{-\alpha_{i,2}} a_{n+1}^{-\alpha_{i,1}} b_{n+1}^{-\beta_{1,2}}  a_{n+1}^{\alpha_{i,1}-\beta_{1,1}} b^{\alpha_{i,2}}_{n+1} C_{1,n+1}^{-y_{1,1}+\alpha_{i,1}+\beta_{1,1}} C_{2,n+1}^{-y_{1,2}-\alpha_{i,1}-\beta_{1,1}} \cdot\\
& C_{3,n+1}^{-y_{1,3}} \cdots C_{i-1,n+1}^{-y_{1,i-1}} C_{i,n+1}^{-y_{1,i}+\beta_{1,2}} C_{i+1,n+1}^{-y_{1,i+1}-\beta_{1,2}} C_{i+2,n+1}^{-y_{1,i+2}} \cdots C_{n,n+1}^{-y_{1,n}} b_{1} a^{\beta_{1,1}}_{n+1} b^{\beta_{1,2}}_{n+1} \sigma^{\beta_{1,3}}\cdot\\
& C^{y_{1,1}}_{1,n+1} \cdots C^{y_{1,n}}_{n,n+1}\\
=& a_{i}^{-1} b_{1}^{-1} a_{i} b_{1} b_{n+1}^{-\alpha_{i,2}} a_{n+1}^{-\alpha_{i,1}} b_{n+1}^{-\beta_{1,2}}  a_{n+1}^{\alpha_{i,1}-\beta_{1,1}} b^{\alpha_{i,2}}_{n+1} a^{\beta_{1,1}}_{n+1} b^{\beta_{1,2}}_{n+1} C_{1,n+1}^{\alpha_{i,1}} C_{2,n+1}^{-\alpha_{i,1}} C_{i,n+1}^{\beta_{1,2}} C_{i+1,n+1}^{-\beta_{1,2}}.
\end{align*}
If $M=\mathbb{K}$ then:
\begin{align*}
\widehat{\phi}(C_{1,i} C_{2,i}^{-1})=&
\sigma^{-\beta_{1,3}} C_{1,n+1}^{-y_{1,1}} \cdots C_{n,n+1}^{-y_{1,n}} b_{n+1}^{-\beta_{1,2}}  a_{n+1}^{-\beta_{1,1}} b_{1}^{-1} \sigma^{-\alpha_{i,3}} C_{1,n+1}^{-x_{i,1}} \cdots C_{n,n+1}^{-x_{i,n}} b_{n+1}^{-\alpha_{i,2}} a_{n+1}^{-\alpha_{i,1}} a_{i}^{-1} \cdot\\
& b_{1} a^{\beta_{1,1}}_{n+1} b^{\beta_{1,2}}_{n+1} \sigma^{\beta_{1,3}} C^{y_{1,1}}_{1,n+1} \cdots C^{y_{1,n}}_{n,n+1} a_{i} a^{\alpha_{i,1}}_{n+1} b^{\alpha_{i,2}}_{n+1} \sigma^{\alpha_{i,3}} C^{x_{i,1}}_{1,n+1} \cdots C^{x_{i,n}}_{n,n+1}\\
=& b_{1}^{-1} \sigma^{-\beta_{1,3}-\alpha_{i,3}} b_{n+1}^{-\beta_{1,2}}  a_{n+1}^{-\beta_{1,1}} b_{n+1}^{-\alpha_{i,2}} a_{n+1}^{-\alpha_{i,1}} C_{1,n+1}^{y_{1,1}-x_{i,1}+\beta_{1,1}-\beta_{1,2}} C_{2,n+1}^{-y_{1,2}-x_{i,2}-2y_{1,1}-\beta_{1,1}+\beta_{1,2}}\cdot\\
& C_{3,n+1}^{-y_{1,3}-x_{i,3}} \cdots C_{n,n+1}^{-y_{1,n}-x_{i,n}} a_{i}^{-1} b_{1} a^{\beta_{1,1}}_{n+1} b^{\beta_{1,2}}_{n+1} \sigma^{\beta_{1,3}} C^{y_{1,1}}_{1,n+1} \cdots C^{y_{1,n}}_{n,n+1}\cdot\\
& a_{i} a^{\alpha_{i,1}}_{n+1} b^{\alpha_{i,2}}_{n+1} \sigma^{\alpha_{i,3}} C^{x_{i,1}}_{1,n+1} \cdots C^{x_{i,n}}_{n,n+1}\\
=& b_{1}^{-1} a_{i}^{-1} \sigma^{-\beta_{1,3}-\alpha_{i,3}} b_{n+1}^{-\beta_{1,2}}  a_{n+1}^{-\beta_{1,1}} b_{n+1}^{-\alpha_{i,2}} a_{n+1}^{-\alpha_{i,1}} C_{1,n+1}^{y_{1,1}-x_{i,1}+\beta_{1,1}-\beta_{1,2}} C_{2,n+1}^{-y_{1,2}-x_{i,2}-2y_{1,1}-\beta_{1,1}+\beta_{1,2}}\cdot\\
& C_{3,n+1}^{-y_{1,3}-x_{i,3}} \cdots C_{i-1,n+1}^{-y_{1,i-1}-x_{i,i-1}} C_{i,n+1}^{-y_{1,i}-x_{i,i}-\beta_{1,2}-\alpha_{i,2}} C_{i+1,n+1}^{-y_{1,i+1}-x_{i,i+1}+\beta_{1,2}+\alpha_{i,2}}\cdot\\
& C_{i+2,n+1}^{-y_{1,i+2}-x_{i,i+2}} \cdots C_{n,n+1}^{-y_{1,n}-x_{i,n}} b_{1} a^{\beta_{1,1}}_{n+1} b^{\beta_{1,2}}_{n+1} \sigma^{\beta_{1,3}} C^{y_{1,1}}_{1,n+1} \cdots C^{y_{1,n}}_{n,n+1} a_{i} a^{\alpha_{i,1}}_{n+1} b^{\alpha_{i,2}}_{n+1} \sigma^{\alpha_{i,3}}\cdot\\
& C^{x_{i,1}}_{1,n+1} \cdots C^{x_{i,n}}_{n,n+1}\\
=& b_{1}^{-1} a_{i}^{-1} b_{1} \sigma^{-\alpha_{i,3}} b_{n+1}^{-\beta_{1,2}}  a_{n+1}^{-\beta_{1,1}} b_{n+1}^{-\alpha_{i,2}} a_{n+1}^{\beta_{1,1}-\alpha_{i,1}} b^{\beta_{1,2}}_{n+1} C_{1,n+1}^{x_{i,1}+\alpha_{i,1}-\alpha_{i,2}} C_{2,n+1}^{-x_{i,2}-2x_{i,1}-\alpha_{i,1}+\alpha_{i,2}}\cdot\\
& C_{3,n+1}^{-x_{i,3}} \cdots C_{i-1,n+1}^{-x_{i,i-1}} C_{i,n+1}^{-x_{i,i}-\beta_{1,2}-\alpha_{i,2}} C_{i+1,n+1}^{-x_{i,i+1}+\beta_{1,2}+\alpha_{i,2}} C_{i+2,n+1}^{-x_{i,i+2}} \cdots C_{n,n+1}^{-x_{i,n}}\cdot\\
& a_{i} a^{\alpha_{i,1}}_{n+1} b^{\alpha_{i,2}}_{n+1} \sigma^{\alpha_{i,3}} C^{x_{i,1}}_{1,n+1} \cdots C^{x_{i,n}}_{n,n+1}\\
=& b_{1}^{-1} a_{i}^{-1} b_{1} a_{i} b_{n+1}^{-\beta_{1,2}}  a_{n+1}^{-\beta_{1,1}} b_{n+1}^{-\alpha_{i,2}} a_{n+1}^{\beta_{1,1}-\alpha_{i,1}} b^{\beta_{1,2}}_{n+1} a^{\alpha_{i,1}}_{n+1} b^{\alpha_{i,2}}_{n+1}
C_{1,n+1}^{2x_{i,1}+\alpha_{i,1}-\alpha_{i,2}} C_{2,n+1}^{-2x_{i,1}-\alpha_{i,1}+\alpha_{i,2}}\cdot \\
& C_{i,n+1}^{-\beta_{1,2}} C_{i+1,n+1}^{\beta_{1,2}}. 
\end{align*}
Using Remark~\ref{rem:nova}(\ref{it:rn4}) and relation~(\ref{eq:relnsd}), it follows from these computations that if $M=\mathbb{T}$ or $\mathbb{K}$ and $i=t+1,\ldots,n$, $\widehat{\phi}(C_{1,i} C_{2,i}^{-1})=C_{1,i} C_{2,i}^{-1} C_{2,n+1}^{\gamma_{i}} z_{i}$, where $z_{i}=z_{i}(C_{1,n+1},C_{3,n+1},\ldots, C_{n,n+1}, a_{n+1},b_{n+1})$ is in canonical form, and:
\begin{equation}\label{eq:defgammai}
\gamma_{i}= \begin{cases}
 -\alpha_{i,1} & \text{if $M=\mathbb{T}$}\\
 -2x_{i,1}-\alpha_{i,1}+\alpha_{i,2} & \text{if $M=\mathbb{K}$.}
\end{cases}
\end{equation}
Applying also 
relations~(\ref{it:puras3}) and~(\ref{it:puras8}) of Theorem~\ref{th:puras},
one may check that the word $a_{i}^{-1} b_{1}^{-1} a_{i} b_{1}$ (resp.\ $(a_{i}^{-1} b_{1}^{-1} a_{i} b_{1})^{-1}$) commutes with $a_{n+1},b_{n+1}$ and $C_{j,n+1}$ for $j=1,\ldots,n$, from which it follows that:
\begin{equation}\label{eq:relnz}
\widehat{\phi}\left(\prod_{j=1}^{s} C_{1,t+j} C_{2,t+j}^{-1}\right)= \left(\prod_{j=1}^{s} C_{1,t+j} C_{2,t+j}^{-1} \right) C_{2,n+1}^{\gamma} z,
\end{equation}
where:
\begin{equation}\label{eq:defgamma}
\gamma= \sum_{i=1}^{s} \gamma_{t+i},
\end{equation}
and $z=\prod_{i=1}^{s} z_{t+i}$ is in canonical form. 

Taking the image by $\widehat{\phi}$ of~\reqref{surface} with $d=s$ and applying~\reqref{relny},~\reqref{relnw} and~\reqref{relnz}, we obtain the following equality in $B_{t,s,m}(M)/H$:
\begin{equation}\label{eq:imphi}
\left(\prod_{j=1}^{s} C_{1,t+j} C_{2,t+j}^{-1}\right) C_{2,n+1}^{\gamma} z = (\sigma_{1}\cdots \sigma_{t-2}\sigma_{t-1}^{2} \sigma_{t-2}\cdots \sigma_{1})^{-1} C_{2,n+1}^{\alpha} \xi R C_{2,n+1}^{\delta} w. 
\end{equation}
One may check using Proposition~4.8 that $R$ commutes with $a_{n+1},b_{n+1}$ and $C_{j,n+1}$ for all $j=1,\ldots,n$. By relations~\reqref{relnsa},
we have:
\begin{equation}\label{eq:prodC12}
\prod_{j=1}^{m} C_{1,n+j} C_{2,n+j}^{-1} =C_{1,n+j}^{m} C_{2,n+j}^{-m}.
\end{equation}
It follows from~\reqref{surface} with $d=s+m$,~\reqref{imphi} and~\reqref{prodC12} that:
\begin{equation}\label{eq:final}
C_{2,n+1}^{\gamma+m} z = C_{2,n+1}^{\alpha} \cdot \xi C_{2,n+1}^{\delta} w,
\end{equation}
which up to collecting terms in $\ker{\widehat{\phi}_{\ast}}$, is in canonical form. 


\begin{lem}\label{lem:exp.M1} 
With the notation of~(\ref{eq:phi1a}):
\begin{enumerate}[(a)]
\item\label{it:exp.M1a} let $t\geq 4$, and let $i,j\in \brak{1,\ldots, n-1}\setminus\brak{t}$, where $\abs{i-j}\geq 2$. In $\ker{\widehat{p}_{\ast}}$ we have:
\begin{equation}\label{eq:commba1}
[b_{n+1}^{-s_{i,2}} a_{n+1}^{-s_{i,1}}, b_{n+1}^{-s_{j,2}} a_{n+1}^{-s_{j,1}}]=
C_{i,n+1}^{r_{j,i+1}} C_{i+1,n+1}^{-2r_{j,i+1}} C_{i+2,n+1}^{r_{j,i+1}}
C_{j,n+1}^{-r_{i,j+1}} C_{j+1,n+1}^{2r_{i,j+1}} C_{j+2,n+1}^{-r_{i,j+1}}
\end{equation}
\item\label{it:exp.M1b} if $t\geq 3$, for all $1\leq i\leq t-2$, 
in $\ker{\widehat{p}_{\ast}}$ we have:
\begin{equation}\label{eq:commba2}
\delta= \prod_{\substack{k=1\\ k\neq i+1,i+2}}^n C_{k,n+1}^{r_{i+1,k}-r_{i,k}} \ldotp 
C_{i,n+1}^{-r_{i,i+1}-r_{i,i+2}} 
C_{i+1,n+1}^{\rho}
C_{i+2,n+1}^{-\rho}
C_{i+3,n+1}^{r_{i+1,i+1}+r_{i+1,i+2}},
\end{equation}
where $\rho=2r_{i,i+2}+ 2r_{i+1,i+1}+r_{i,i+1}+r_{i+1,i+2}$, $\delta= \beta^{-1}\alpha^{-1}\beta^{-1} \alpha \beta \alpha \sigma^{s_{i,3}-s_{i+1,3}}$, with $\alpha=a_{n+1}^{s_{i,1}} b_{n+1}^{s_{i,2}}$ and $\beta= a_{n+1}^{s_{i+1,1}} b_{n+1}^{s_{i+1,2}}$. 
\end{enumerate}
\end{lem}

\begin{proof}
Let $t\geq 4$, and let $1\leq i,j\leq t-1$, where $\abs{i-j}\geq 2$, and consider the Artin relation $\sigma_{i}\sigma_{j}=\sigma_{j}\sigma_{i}$ in $B_{t,s}(M)/H'$. 
By relation~(\ref{rel:sigmaCij}) and relation~(\ref{it:Bnm6}) of Proposition~\ref{prop:B_{n,m}}, the only generators of $B_m(M\setminus\left\{x_1,\ldots,x_n\right\})/L$ that do not both commute with $\sigma_{i}$ and $\sigma_{j}$ in $B_{t,s,m}(M)/H$ are $C_{i+1,n+1}$ and $C_{j+1,n+1}$ respectively. Taking the image of $\sigma_{i}\sigma_{j}=\sigma_{j}\sigma_{i}$ by $\widehat{\phi}$ and making use of~(\ref{eq:phi1a}), it follows that the coefficients of $\sigma$ and of the terms $C_{k,n+1}$, $k=1,\ldots,n$, $k\neq i+1,j+1$ cancel pairwise, and applying~(\ref{rel:sigmaCij}), we obtain the following relation:
\begin{equation*}
\sigma_i \sigma_j \tau_{i} \tau_{j} C_{j,n+1}^{r_{i,j+1}} C_{j+1,n+1}^{r_{j,j+1}-r_{i,j+1}} C_{j+2,n+1}^{r_{i,j+1}} C_{i+1,n+1}^{r_{i,i+1}+r_{j,i+1}}= \sigma_j \sigma_i \tau_{j} \tau_{i}
C_{i,n+1}^{r_{j,i+1}} C_{i+1,n+1}^{r_{i,i+1}-r_{j,i+1}} C_{i+2,n+1}^{r_{j,i+1}} C_{j+1,n+1}^{r_{j,j+1}+r_{i,j+1}},
\end{equation*}
where $\tau_{i}=a_{n+1}^{s_{i,1}} b_{n+1}^{s_{i,2}}$ and $\tau_{j}=a_{n+1}^{s_{j,1}} b_{n+1}^{s_{j,2}}$. 
Equation~\reqref{commba1} then follows using the lift of relation~(\ref{it:full2}) of Theorem~\ref{th:total}.

We obtain equation~\reqref{commba2} in a similar manner by considering the image by $\widehat{\phi}$ of the Artin relation $\sigma_i \sigma_{i+1} \sigma_i =\sigma_{i+1}\sigma_i \sigma_{i+1}$ for $1\leq i\leq t-2$, where $t\geq 3$.
\end{proof}

\section{Proof of Theorem~\ref{th:FNsplits}}\label{sec:FNsplits}

This section is devoted to proving Theorem~\ref{th:FNsplits}. We start by showing that the condition is sufficient.

\begin{prop}\label{prop:split}
Let $M$ be the torus or the Klein bottle, and let $m,n\geq 1$. If $n$ divides $m$ then the generalised Fadell-Neuwirth short exact sequence~(\ref{eq:gen.seqFN}) splits.
\end{prop}

\begin{proof}
Suppose that $m=ln$ for some $l\in \mathbb N$. To conclude that there exists a section, we proceed in a manner similar to that of~\cite{FN} in the case of the short exact sequence~(\ref{eq:seqFN}). If $M$ is the $2$-torus or the Klein bottle, let $\nu$ be a non-vanishing vector field in $M$ and let $d$ be a metric on $M$. We shall construct a cross-section on the level of configuration spaces, from which the result will follow by taking the induced homomorphism on the level of fundamental groups. Let $s\colon\thinspace {D_{n}(M)} \to D_{n,m}(M)$ be the map defined by $s(x)=(x,s_{1}(x),\ldots,s_{n}(x))$ for all $x=(x_{1},\ldots, x_{n})\in D_n(M)$ (note that notationally, we do not distinguish between ordered and unordered tuples), where for $i=1,\ldots, n$: 
\begin{equation}\label{eq:sec1}
s_{i}(x)=\left(x_{i}+\frac{\nu(x).\epsilon(x)}{2(l+1)},x_{i}+\frac{2\nu(x).\epsilon(x)}{2(l+1)},\ldots,x_{i}+\frac{l\nu(x).\epsilon(x)}{2(l+1)}\right),
\end{equation} 
and $\displaystyle \epsilon(x)=\min_{1\leq k < j \leq n}\big\{ d(x_{k},x_{j})\big\}>0$. So for all $i=1,\ldots,n$, $s_i(x)$ consists of $l$ distinct unordered points of $M$, and the union of these points yields $m$ distinct unordered points of $M$ that are also distinct from the $n$ points of $x$. Therefore $s$ is a well-defined continuous map, and it is a cross-section for the map $p \colon\thinspace D_{n,m}(M)\longrightarrow D_{n}(M)$. 
\end{proof}

Proposition~\ref{prop:split} gives a sufficient condition for the short exact sequence~(\ref{eq:gen.seqFN}) to split. We now prove that it is also necessary.  
Suppose then that the short exact sequence~(\ref{eq:gen.seqFN}) splits. If $n=1$ then there is nothing to prove. So suppose that $n\geq 2$. We will use the computations of Section~\ref{sec:contas} and the commutative diagram~(\ref{eq:basic}), with $s=0$, $t=n$, $L=H= \Gamma_{2}(B_{m}(M\setminus \brak{x_{1},\ldots,x_{n}}))$ and $H'=\brak{1}$. Note that $X'=\varnothing$ in this case, so relations~(\ref{eq:relnsd}) do not exist. Also, relation~(\ref{eq:relnsa}) follows from Proposition~\ref{prop:kernelab}, and relations~(\ref{eq:relnsb}) and~(\ref{eq:relnsc}) follow from Proposition~\ref{prop:B_{n,m}ab} and Theorems~\ref{th:puras} and~\ref{th:total}.
Making use of the presentation of $\ker{\widehat{p}_{\ast}}= B_m(M\setminus\left\{x_1,\ldots,x_n\right\})\ab$ given by Proposition~\ref{prop:kernelab}, it follows that: 
\begin{equation}\label{eq:exp}
\left\{ \begin{aligned}
\widehat{\phi}(a)&=a\cdot x^{k_{1}}y^{k_{2}}\sigma^{i_{0}}\rho^{i_{2}}_{2}\cdots\rho^{i_{n}}_{n}\\
\widehat{\phi}(b)&=b\cdot x^{k_{3}}y^{k_{4}}\sigma^{j_{0}}\rho^{j_{2}}_{2}\cdots\rho^{j_{n}}_{n}\\
\widehat{\phi}(\sigma_{i})&= \text{$\sigma_{i}\cdot x^{l_{i,1}}y^{l_{i,2}} \sigma^{r_{i,0}}\rho^{r_{i,2}}_{2}\cdots\rho^{r_{i,n}}_{n}$ for $i=1,\ldots,n-1$,}
\end{aligned}\right.\end{equation}
where $k_1,\ldots,k_4,l_{i,1}, l_{i,2}, i_q,j_q,r_{i,q}\in \mathbb{Z}$ for $q=0,2,\ldots,n$, and $i_0,j_0,r_{i,0}$ are defined modulo $2$. 
Comparing the notation of~\reqref{phi1a} with that of~\reqref{exp}, by Proposition~\ref{prop:kernelab}, $a_{n+1}=x$, $b_{n+1}=y$, $\rho_i=C_{i,n+1}$ for $i=2,\ldots,n$, $\rho_1=C_{1,n+1}=1$ if $M=\mathbb{T}$ and $\rho_1=C_{1,n+1}=x^2$ if $M=\mathbb{K}$. Note also that the exponents in~\reqref{exp} have been renamed with respect to~\reqref{phi1a}. To simplify the notation in what follows, for $q=0,2,\ldots,n$, let $r_{1,q}=r_{q}$ and for $p=1,2$, let $l_{1,p}=l_{p}$.
 We also set $r_{1,1}=0$. We will now take the image by $\widehat{\phi}$ of some of the relations of Theorem~\ref{th:total} to obtain relations in $B_{n,m}(M)/\Gamma_2(B_m(M\setminus\left\{x_1, \ldots,x_n\right\}))$ that we will simplify using Proposition~\ref{prop:B_{n,m}ab}. This will enable us to obtain information about the coefficients appearing in~(\ref{eq:exp}) above.

We first apply this procedure to relations~(\ref{it:full5}) and~(\ref{it:full6}) of Theorem~\ref{th:total}. 

\begin{lem}\label{lem:exp.M} 
With the above notation, we have: 
\begin{enumerate}
\item\label{it:M1} $l_{1}=0$ if $M=\mathbb{T}$, and $l_{1}=k_{4}-r_{2}$ if $M=\mathbb{K}$.

\item\label{it:M2} $l_{2}=0$.

\item\label{it:M3} $k_{4}=k_{1}$ if $M=\mathbb{T}$, and $k_2=0$ and  $k_{4}=-k_{1}-2i_{2}$ if $M=\mathbb{K}$.

	
\item\label{it:M4} If $n\geq3$, $k_{4}=-r_{2}-2r_{3}$.
\end{enumerate}
\end{lem}

\begin{proof}
We start by studying the image by $\widehat{\phi}$ of relation~(\ref{it:full5}) of Theorem~\ref{th:total}, where we substitute each term of the image by the corresponding term of~(\ref{eq:exp}). The left- and right-hand sides yield respectively:
\begin{equation}\label{eq:r.1}
\widehat{\phi}(b^{-1}\sigma_{1}a)= (\rho^{-j_{n}}_{n}\cdots \rho^{-j_{2}}_{2} \sigma^{-j_{0}}y^{-k_{4}}x^{-k_{3}}b^{-1})(\sigma_{1} x^{l_{1}}y^{l_{2}} \sigma^{r_{0}} \rho^{r_{2}}_{2}\cdots\rho^{r_{n}}_{n})(a x^{k_{1}}y^{k_{2}} \sigma^{i_{0}}\rho^{i_{2}}_{2}\cdots\rho^{i_{n}}_{n})
\end{equation}
and
\begin{align}\label{eq:r.2}
\widehat{\phi}(\sigma_{1}a\sigma_{1}b^{-1}\sigma_{1})=& (\sigma_{1} x^{l_{1}}y^{l_{2}}\sigma^{r_{0}}\rho^{r_{2}}_{2}\cdots\rho^{r_{n}}_{n}) (a x^{k_{1}}y^{k_{2}}\sigma^{i_{0}}\rho^{i_{2}}_{2}\cdots\rho^{i_{n}}_{n})(\sigma_{1} x^{l_{1}}y^{l_{2}}\sigma^{r_{0}} \rho^{r_{2}}_{2}\cdots \rho^{r_{n}}_{n})\cdot \notag\\
& (\rho^{-j_{n}}_{n}\cdots\rho^{-j_{2}}_{2}\sigma^{-j_{0}}y^{-k_{4}}x^{-k_{3}}b^{-1})(\sigma_{1} x^{l_{1}}y^{l_{2}} \sigma^{r_{0}} \rho^{r_{2}}_{2}\cdots\rho^{r_{n}}_{n}).
\end{align}
Using Proposition~\ref{prop:B_{n,m}ab}, we see that the conjugate by $a,b$ or $\sigma_1$ of $x,y,\rho_2$ or $\rho_3$ is a word in $x,y,\rho_2$ and $\rho_3$, and that $a,b$ and $\sigma_1$ commute with each of $\rho_4,\ldots, \rho_n$ and $\sigma$. In this way, the terms of~(\ref{eq:r.1}) and~(\ref{eq:r.2}) involving $\rho_4,\ldots, \rho_n$ and $\sigma$ commute with all of the other terms, so they may be gathered together on the right-hand side of each of the expressions, and the remaining terms in the canonical form do not involve the elements $\rho_4,\ldots, \rho_n$ or $\sigma$. In particular, identifying the coefficients of these elements, for $k=4,\ldots,n$, we obtain $i_{k}-j_{k}+r_{k}=i_{k}-j_{k}+3r_{k}$, in other words:
\begin{equation*}
\text{$r_k=0$ for $k=4,\ldots,n$.}
\end{equation*}
It follows from~(\ref{eq:r.1}) and~(\ref{eq:r.2}) that:
\begin{multline}\label{eq:wlwr}
(\rho^{-j_{3}}_{3}\rho^{-j_{2}}_{2} y^{-k_{4}}x^{-k_{3}}b^{-1})(\sigma_{1} x^{l_{1}}y^{l_{2}} \rho^{r_{2}}_{2} \rho^{r_{3}}_{3})(a x^{k_{1}}y^{k_{2}} \rho^{i_{2}}_{2} \rho^{i_{3}}_{3})= (\sigma_{1} x^{l_{1}}y^{l_{2}} \rho^{r_{2}}_{2} \rho^{r_{3}}_{3}) (a x^{k_{1}}y^{k_{2}} \rho^{i_{2}}_{2}\rho^{i_{3}}_{3})\ldotp\\
(\sigma_{1} x^{l_{1}}y^{l_{2}} \rho^{r_{2}}_{2} \rho^{r_{3}}_{3}) (\rho^{-j_{3}}_{3} \rho^{-j_{2}}_{2} y^{-k_{4}}x^{-k_{3}}b^{-1})(\sigma_{1} x^{l_{1}}y^{l_{2}}  \rho^{r_{2}}_{2} \rho^{r_{3}}_{3}).
\end{multline}
Let $w_L$ and $w_R$ denote the left- and right-hand side of~(\ref{eq:wlwr}) respectively. We now put each of $w_L$ and $w_R$ in canonical form using relations~(\ref{it:s4})--(\ref{it:s8}) of Proposition~\ref{prop:B_{n,m}ab} and Remarks~\ref{rem:misc}(\ref{it:miscc}). First suppose that $M=\mathbb{T}$. Then:
\begin{equation*}
(\sigma_{1} x^{l_{1}}y^{l_{2}} \rho^{r_{2}}_{2} \rho^{r_{3}}_{3}) (a x^{k_{1}}y^{k_{2}} \rho^{i_{2}}_{2}\rho^{i_{3}}_{3})= \sigma_{1}a(x^{l_{1}}y^{l_{2}} \rho^{l_{2}}_{2}\rho^{r_{2}}_{2} \rho^{r_{3}}_{3})(x^{k_{1}}y^{k_{2}}\rho^{i_{2}}_{2} \rho^{i_{3}}_{3})= \sigma_{1}a(x^{l_{1}+k_{1}}y^{l_{2}+k_{2}}\rho^{l_{2}+r_{2}+i_{2}}_{2}\rho^{r_{3}+i_{3}}_{3}). 
\end{equation*}
It follows that: 
\begin{align*} 
w_L &= b^{-1}(x^{-k_{3}}y^{-k_{4}}\rho^{-k_{3}-j_{2}}_{2} \rho^{-j_{3}}_{3}) \sigma_{1} a(x^{l_{1}+k_{1}}y^{l_{2}+k_{2}}\rho^{l_{2}+r_{2}+i_{2}}_{2}\rho^{r_{3}+i_{3}}_{3})\\ 
&=b^{-1}\sigma_{1}(x^{-k_{3}}y^{-k_{4}}\rho^{k_{3}+j_{2}}_{2}\rho^{-k_{3}-j_{2}-j_{3}}_{3}) a(x^{l_{1}+k_{1}} y^{l_{2}+k_{2}} \rho^{l_{2}+r_{2}+i_{2}}_{2}\rho^{r_{3}+i_{3}}_{3})\\
%
%
&= b^{-1}\sigma_{1}a(x^{-k_{3}+l_{1}+k_{1}}y^{-k_{4}+l_{2}+k_{2}}\rho^{-k_{4}+k_{3}+j_{2}+l_{2}+r_{2}+i_{2}}_{2}\rho^{-k_{3}-j_{2}-j_{3}+r_{3}+i_{3}}_{3})\\
w_R&=\sigma_{1}a(x^{l_{1}+k_{1}}y^{l_{2}+k_{2}}\rho^{l_{2}+r_{2}+i_{2}}_{2}\rho^{r_{3}+i_{3}}_{3}) \sigma_{1}(x^{l_{1}-k_{3}} y^{l_{2}-k_{4}} \rho^{r_{2}-j_{2}}_{2}\rho^{r_{3}-j_{3}}_{3})b^{-1}\sigma_{1}(x^{l_{1}}y^{l_{2}}\rho^{r_{2}}_{2}\rho^{r_{3}}_{3})\\
&=\sigma_{1}a\sigma_{1}(x^{2l_{1}+k_{1}-k_{3}}y^{2l_{2}+k_{2}-k_{4}}\rho^{-l_{2}-i_{2}-j_{2}}_{2}\rho^{l_{2}+r_{2}+i_{2}+2r_{3}+i_{3}-j_{3}}_{3})b^{-1}\sigma_{1}(x^{l_{1}}y^{l_{2}}\rho^{r_{2}}_{2}\rho^{r_{3}}_{3})\\
&=\sigma_{1}a\sigma_{1}b^{-1}(x^{2l_{1}+k_{1}-k_{3}}y^{2l_{2}+k_{2}-k_{4}}\rho^{-l_{2}-i_{2}-j_{2}+2l_{1}+k_{1}-k_{3}}_{2}\rho^{l_{2}+r_{2}+i_{2}+2r_{3}+i_{3}-j_{3}}_{3})\sigma_{1}(x^{l_{1}}y^{l_{2}}\rho^{r_{2}}_{2}\rho^{r_{3}}_{3})\\
&=\sigma_{1}a\sigma_{1}b^{-1}\sigma_{1}(x^{3l_{1}+k_{1}-k_{3}}y^{3l_{2}+k_{2}-k_{4}}\rho^{l_{2}+i_{2}+j_{2}-2l_{1}-k_{1}+k_{3}+r_{2}}_{2}\rho^{-j_{2}+2l_{1}+k_{1}-k_{3}+r_{2}+3r_{3}+i_{3}-j_{3}}_{3}).
\end{align*}
Thus $w_L$ and $w_R$ are now in canonical form, and applying relation~(\ref{it:s2}) (the lift of relation~(\ref{it:full5}) of Theorem~\ref{th:total}) of Proposition~\ref{prop:B_{n,m}ab}, and comparing the coefficients of $x,y, \rho_2$ and $\rho_3$, we obtain parts~(\ref{it:M1})--(\ref{it:M4}) respectively of the statement, and the lemma is proved in the case $M=\mathbb{T}$.
%

Now suppose that $M=\mathbb{K}$. Then:
\begin{align}
(\sigma_{1} x^{l_{1}}y^{l_{2}} \rho^{r_{2}}_{2} \rho^{r_{3}}_{3}) (a x^{k_{1}}y^{k_{2}} \rho^{i_{2}}_{2}\rho^{i_{3}}_{3})&= \sigma_{1}a(x^{l_{1}} (y^{l_{2}}x^{-2l_{2}}\rho^{l_{2}}_{2})\rho^{r_{2}}_{2}\rho^{r_{3}}_{3})(x^{k_{1}}y^{k_{2}}\rho^{i_{2}}_{2}\rho^{i_{3}}_{3})\notag\\
&= \sigma_{1}a(x^{l_{1}-2l_{2}+k_{1}}y^{l_{2}+k_{2}}\rho^{l_{2}+r_{2}+i_{2}}_{2}\rho^{r_{3}+i_{3}}_{3}),\label{eq:siga}
\end{align}
and thus:
\begin{align*}
w_L
&=b^{-1}(x^{k_{3}-2k_{4}}y^{-k_{4}}\rho^{-k_{3}+k_{4}-j_{2}}_{2}\rho^{-j_{3}}_{3}) \sigma_{1}a(x^{l_{1}-2l_{2}+k_{1}}y^{l_{2}+k_{2}}\rho^{l_{2}+r_{2}+i_{2}}_{2}\rho^{r_{3}+i_{3}}_{3})\\
&= b^{-1}\sigma_{1}(x^{-k_{3}-2j_{2}} y^{-k_{4}} \rho^{k_{3}-k_{4}+j_{2}}_{2} \rho^{-k_{3}+k_{4}-j_{2}-j_{3}}_{3}) a(x^{l_{1}-2l_{2}+k_{1}}y^{l_{2}+k_{2}} \rho^{l_{2}+r_{2}+i_{2}}_{2} \rho^{r_{3}+i_{3}}_{3})\\
&=b^{-1}\sigma_{1}a(x^{-k_{3}-2j_{2}+2k_{4}+l_{1}-2l_{2}+k_{1}}y^{-k_{4}+l_{2}+k_{2}}\rho^{k_{3}-2k_{4}+j_{2}+l_{2}+r_{2}+i_{2}}_{2}\rho^{-k_{3}+k_{4}-j_{2}-j_{3}+r_{3}+i_{3}}_{3})\\
w_R 
&=\sigma_{1}a(x^{l_{1}-2l_{2}+k_{1}}y^{l_{2}+k_{2}}\rho^{l_{2}+r_{2}+i_{2}}_{2}\rho^{r_{3}+i_{3}}_{3})\sigma_{1}(x^{l_{1}-k_{3}}y^{l_{2}-k_{4}}\rho^{r_{2}-j_{2}}_{2}\rho^{r_{3}-j_{3}}_{3})b^{-1}\sigma_{1}(x^{l_{1}}y^{l_{2}}\rho^{r_{2}}_{2}\rho^{r_{3}}_{3})\\
&=\sigma_{1}a\sigma_{1}(x^{2l_{1}+k_{1}+2r_{2}+2i_{2}-k_{3}}y^{2l_{2}+k_{2}-k_{4}}\rho^{-l_{2}-i_{2}-j_{2}}_{2}\rho^{l_{2}+r_{2}+i_{2}+2r_{3}+i_{3}-j_{3}}_{3})b^{-1}\sigma_{1}(x^{l_{1}}y^{l_{2}}\rho^{r_{2}}_{2}\rho^{r_{3}}_{3})\\
&=\sigma_{1}a\sigma_{1}b^{-1}(x^{-2l_{1}-k_{1}-2r_{2}-2i_{2}+k_{3}+4l_{2}+2k_{2}-2k_{4}}y^{2l_{2}+k_{2}-k_{4}}\rho^{2l_{1}+k_{1}+2r_{2}+i_{2}-k_{3}-3l_{2}-k_{2}+k_{4}-j_{2}}_{2}\cdot\\
& \quad \rho^{l_{2}+r_{2}+i_{2}+2r_{3}+i_{3}-j_{3}}_{3})\sigma_{1}(x^{l_{1}}y^{l_{2}}\rho^{r_{2}}_{2}\rho^{r_{3}}_{3})\\
&=\sigma_{1}a\sigma_{1}b^{-1}\sigma_{1}(x^{3l_{1}+k_{1}+2r_{2}-k_{3}-2l_{2}-2j_{2}}y^{3l_{2}+k_{2}-k_{4}}\rho^{-2l_{1}-k_{1}-r_{2}-i_{2}+k_{3}+3l_{2}+k_{2}-k_{4}+j_{2}}_{2}\cdot\\
& \quad \rho^{2l_{1}+k_{1}+3r_{2}+2i_{2}-k_{3}-2l_{2}-k_{2}+k_{4}-j_{2}+3r_{3}+i_{3}-j_{3}}_{3}).
\end{align*}

Thus $w_L$ and $w_R$ are now in canonical form, and applying relation~(\ref{it:s2}) (the lift of relation~(\ref{it:full5}) of Theorem~\ref{th:total}) of Proposition~\ref{prop:B_{n,m}ab}, and comparing the coefficients of $x$ and $y$, we obtain parts~(\ref{it:M1}) and~(\ref{it:M2}) respectively of the statement. 
Comparing the coefficients of $\rho_{2}$ and using part~(\ref{it:M2}) of the statement, we obtain:
\begin{equation}\label{eq:K7}
-k_{4}=-2(l_{1}+r_{2})-k_{1}-2i_{2}+k_{2}.
\end{equation} 
It follows from part~(\ref{it:M1}) of the statement and~(\ref{eq:K7}) that: \begin{equation}\label{eq:K.7}
k_{4}=-k_{1}+k_{2}-2i_{2}.
\end{equation} 
If $n\geq 3$,  comparing the coefficients of $\rho_{3}$ and using parts~(\ref{it:M1}) and~(\ref{it:M2}) of the statement, we see that: 
\begin{equation}\label{eq:K8} 
0=2k_4+k_{1}+r_{2}+2i_{2}-k_{2}+2r_{3}.
\end{equation}
To obtain parts~(\ref{it:M3}) and~(\ref{it:M4}) of the statement, we analyse the image of relation~(\ref{it:full6}) of Theorem~\ref{th:total} by $\widehat{\phi}$ using the fact that $l_2=0$. We need only analyse the coefficients of $y$ and $\rho_{2}$, since these are the only generators of $B_m(M\setminus\left\{x_1,\ldots,x_n\right\})\ab$ that do not commute both with $a$ and $\sigma_{1}$. Let $w_L'=a(y^{k_{2}}\rho^{i_{2}}_{2}) \sigma_{1}(\rho^{r_{2}}_{2})a(y^{k_{2}}\rho^{i_{2}}_{2})\sigma_{1}(\rho^{r_{2}}_{2})$, and let $w_R'=\sigma_{1}(\rho^{r_{2}}_{2})a(y^{k_{2}}\rho^{i_{2}}_{2})\sigma_{1}(\rho^{r_{2}}_{2})a(y^{k_{2}}\rho^{i_{2}}_{2})$. By a computation similar to that of~(\ref{eq:siga}), we have $a(y^{k_{2}}\rho^{i_{2}}_{2}) \sigma_{1}(\rho^{r_{2}}_{2})= a\sigma_{1}(x^{2i_{2}}y^{k_{2}} \rho^{r_{2}-i_{2}}_{2} \rho^{i_{2}}_{3})$ and $\sigma_{1}(\rho^{r_{2}}_{2}) a(y^{k_{2}}\rho^{i_{2}}_{2}) = \sigma_{1} a(y^{k_{2}} \rho_{2}^{r_{2}+i_{2}})$, so:
%
\begin{align*}
w_L' 
&= a\sigma_{1}(x^{2i_{2}}y^{k_{2}}\rho^{-i_{2}+r_{2}}_{2}\rho^{i_{2}}_{3}) a\sigma_{1}(x^{2i_{2}}y^{k_{2}}\rho^{-i_{2}+r_{2}}_{2}\rho^{i_{2}}_{3})\\
&=a\sigma_{1}a(x^{2(i_{2}-k_{2})} y^{k_{2}} \rho_{2}^{k_{2}+r_{2}-i_2} \rho_{3}^{i_{2}})\sigma_{1}(x^{2i_{2}}y^{k_{2}}\rho^{r_{2}-i_{2}}_{2}\rho^{i_{2}}_{3})
= a\sigma_{1}a\sigma_{1}(x^{2(i_{2}+r_{2})}y^{2k_{2}}\rho^{-k_{2}}_{2}\rho^{i_{2}+k_{2}+r_{2}}_{3})\\
w_R' &=\sigma_{1} a(y^{k_{2}} \rho_{2}^{r_{2}+i_{2}})\sigma_{1} a(y^{k_{2}} \rho_{2}^{r_{2}+i_{2}})
=\sigma_{1}a\sigma_{1}(x^{2(r_{2}+i_{2})} y^{k_{2}} \rho^{-i_{2}-r_2}_{2} \rho^{r_{2}+i_{2}}_{3}) a(y^{k_{2}}\rho^{r_2+i_{2}}_{2})\\
&=\sigma_{1}a\sigma_{1} a(x^{2(r_{2}+i_{2}-k_{2})}y^{2k_{2}}\rho^{k_{2}}_{2}\rho^{r_{2}+i_{2}}_{3}).
\end{align*}
Part~(\ref{it:M3}) of the statement follows by comparing the coefficients of $x$ and~(\ref{eq:K.7}), and part~(\ref{it:M4}) is a consequence of~(\ref{eq:K8}) and part~(\ref{it:M3}). 
\end{proof}

\begin{lem}\label{lem:phiartin}\mbox{}
\begin{enumerate}[(a)]
\item\label{it:phiartin0} 
Let $M=\mathbb T$ or $\mathbb K$, and let $n\geq 4$. Then $r_{j,k}=0$ for all $1\leq j\leq n-1$ and $k=2,\ldots, j-1,j+3,\ldots, n$.
\item\label{it:phiartina} Let $n\geq 3$. If $M=\mathbb T$ (resp.\ $M=\mathbb K$), and $2\leq i \leq n-2$,
then $l_{i,j}=l_{i+1,j}=l_{1,j}$ (resp.\ $l_{i,j}=l_{i+1,j}$) for $j=1,2$, $r_{1,0} \equiv r_{i,0}\equiv r_{i+1,0} \bmod{2}$, and: 
\begin{equation}\label{eq:ar4}
-2r_{i+1,i+1}-r_{i+1,i+2}=-2r_{i,i}-r_{i,i+1}. 
\end{equation}
Further: 
\begin{equation}\label{eq:ar5}
\text{$-2r_{i,i}-r_{i,i+1}=2r_{3}+r_{2}$ for all $i=2,\ldots,n-1$.}
\end{equation}
\item\label{it:phiartinb} If $M=\mathbb{K}$, $l_{k,1}=0$ for all $2\leq k\leq n-1$.
\end{enumerate}
\end{lem}

\begin{proof}
We first prove part~(\ref{it:phiartin0}). Let $M=\mathbb T$ or $\mathbb K$. Recall from the proof of Proposition~\ref{prop:kernelab} that $\rho_1=C_{1,n+1}$ is equal to $1$ (resp.\ to $x^{2}$) if $M=\mathbb{T}$ (resp.\ if $M=\mathbb{K}$). First let $n\geq 4$, and let $1\leq i,j\leq n-1$ be such that $\abs{i-j}\geq 2$. Applying Proposition~\ref{prop:kernelab} and~(\ref{eq:commba1}), we have:
\begin{equation*}
\rho_{i}^{r_{j,i+1}} \rho_{i+1}^{-2r_{j,i+1}} \rho_{i+2}^{r_{j,i+1}}
\rho_{j}^{-r_{i,j+1}} \rho_{j+1}^{2r_{i,j+1}} \rho_{j+2}^{-r_{i,j+1}}=1,
\end{equation*}
which is in canonical form (possibly up to permutation of some of the factors). Comparing the coefficients of $\rho_{i+1}$ (resp.\ $\rho_{j+1}$) if $i<j$ (resp.\ $i>j$) and using once more Proposition~\ref{prop:kernelab}, we see that $r_{j,i+1}=0$ (resp.\ $r_{i,j+1}=0$). So for all $1\leq j\leq n-1$, $r_{j,k}=0$ for all $k=2,\ldots,j-1$ (resp.\ for all $k=j+3,\ldots,n$), which proves part~(\ref{it:phiartin0}).

Now let $n\geq 3$, and let $1\leq i\leq n-2$. 
Using Proposition~\ref{prop:kernelab}, equation~(\ref{eq:commba2}) may be written as:
\begin{equation}\label{eq:rhorho}
x^{l_{i+1,1}-l_{i,1}} y^{l_{i+1,2}-l_{i,2}} \sigma^{r_{i+1,0}-r_{i,0}} \prod_{\substack{k=1\\ k\neq i+1,i+2}}^n  \rho_{k}^{r_{i+1,k}-r_{i,k}} \ldotp \rho_{i}^{-r_{i,i+1}-r_{i,i+2}} 
\rho_{i+1}^{\rho}
\rho_{i+2}^{-\rho}
\rho_{i+3}^{\rho'}=1,
\end{equation}
where $\rho=2r_{i,i+2}+ 2r_{i+1,i+1}+r_{i,i+1}+r_{i+1,i+2}$ and $\rho'=r_{i+1,i+1}+r_{i+1,i+2}$. Now~\reqref{rhorho} is in canonical form,
and using the fact that $r_{i+1,i}= r_{i,i+3}=0$ by part~(\ref{it:phiartin0}), and comparing the coefficients of $\rho_i, \rho_{i+1}$, $\rho_{i+3}$, $x$, $y$ and $\sigma$,
we deduce that: 
\begin{gather}
r_{i,i}+r_{i,i+1}+r_{i,i+2}=0 \; \text{for $2\leq i\leq n-2$}\label{eq:ar1}\\
-2r_{i+1,i+1}-r_{i+1,i+2}=2r_{i,i+2}+r_{i,i+1} \; \text{for $1\leq i\leq n-2$}\label{eq:ar2}\\
r_{i+1,i+1}+r_{i+1,i+2}+r_{i+1,i+3}=0 \;\text{for $1\leq i\leq n-3$}.\notag\\ 
\text{$l_{i,j}=l_{i+1,j}$, where $j=1,2$ and $1\leq i\leq n-2$ (resp.\ $2\leq i\leq n-2$) if $M=\mathbb T$ (resp.\ $M=\mathbb K$)}\label{eq:ar0i}\\
\text{$r_{i,0} \equiv r_{i+1,0} \bmod{2}$ for $1\leq i\leq n-2$,} \notag 
\end{gather}
from which we obtain the relations involving $l_{i,j}$ and $r_{i,0}$. Equation~\reqref{ar4} follows from~\reqref{ar1} and~\reqref{ar2} for all $2\leq i\leq n-2$. Replacing $i$ by $i-1$ in~\reqref{ar4} and using induction on $i$, we see that $-2r_{i,i}- r_{i,i+1}= -2r_{2,2}-r_{2,3}$ for all $2\leq i\leq n-1$. Equation~\reqref{ar5} then follows from this by applying~\reqref{ar2} with $i=1$.
This proves part~(\ref{it:phiartina}).

To prove part~(\ref{it:phiartinb}), let $M=\mathbb{K}$. Since $\rho_1=x^2$, it follows by taking $i=1$ in~\reqref{rhorho} and comparing the coefficients of $\rho_1$ that $l_{2,1}-l_{1}+2(r_{2,1}-r_{1}-r_{2}-r_{3})=0$. Since $r_{1}=0$ by definition and $r_{2,1}=0$ by part~(\ref{it:phiartina}), we see that: 
\begin{equation}\label{eq:i1K}
\text{$2(r_2+r_3)+l_1=l_{2,1}$, $l_{1,2}=l_{2,2}$, and $r_{1,0} \equiv r_{2,0} \bmod{2}$,}
\end{equation}
where we also compare the coefficients of $y$ and $\sigma$ in~\reqref{rhorho}. Now by Lemma~\ref{lem:exp.M}(\ref{it:M1}) and~(\ref{it:M4}), $l_1=k_4-r_2=-2(r_2+r_3)$, and we deduce from~(\ref{eq:i1K}) that $l_{2,1}=0$, and then from~(\ref{eq:ar0i}) that $l_{k,1}=0$ for all $2\leq k\leq n-1$. This proves part~(\ref{it:phiartinb}) of the statement.
\end{proof}


We now complete the proof of Theorem~\ref{th:FNsplits}.

\begin{proof}[Proof of Theorem~\ref{th:FNsplits}] 
Recall that $\rho_{1}=1$ (resp.\ $\rho_{1}=x^{2}$) if $M=\mathbb{T}$ (resp.\ if $M=\mathbb{K}$). We compare the exponents of $\rho_{2}$ in~\reqref{final}. Using Proposition~\ref{prop:kernelab}, we obtain: 
\begin{equation}\label{eq:coeffcaseab}
\gamma+m=\alpha + \delta.
\end{equation}
where $\alpha,\gamma$ and $\delta$ are given by~\reqref{defalpha} (where $t=n$),~\reqref{defgamma} and~\reqref{defdelta} respectively. Since $s=0$, $\gamma=0$ by~\reqref{defgamma}, and using~\reqref{defdelta}, equation~(\ref{eq:exp}) and Lemma~\ref{lem:exp.M}(\ref{it:M3}), we see that:
\begin{align}
\delta &= \begin{cases}
\beta_{1,2}+\alpha_{1,1} =k_{4}+k_{1} & \text{if $M=\mathbb{T}$}\\
2(\alpha_{1,2}-x_{1,1}-x_{1,2})-\alpha_{1,1}+\beta_{1,2}= -2i_{2}-k_{1}+k_{4} & \text{if $M=\mathbb{K}$}
\end{cases}\notag\\
& =2k_{4},\label{eq:calcdeltaa}
\end{align}
using the fact that in the case $M=\mathbb{K}$ that $x_{1,2}=i_{2}$, and $x_{1,1}=0$, and that $\alpha_{1,2}=k_{2}=0$ by Lemma~\ref{lem:exp.M}(\ref{it:M3}). If $n=2$ then $\alpha=0$ by~\reqref{defalpha}, and it follows from~\reqref{coeffcaseab} and~\reqref{calcdeltaa} that $m=nk_4$ as required. Suppose now that $n\geq 4$. Using~\reqref{defalpha}, \relem{phiartin}(\ref{it:phiartin0}) and~\reqref{ar4}, we have:
\begin{equation}\label{eq:calcalphaa}
\alpha= \sum_{l=2}^{n-1} (2r_{l,l}+r_{l,l+1})=(n-2)(2r_{2,2}+r_{2,3}).
\end{equation}
Observe that~\reqref{calcalphaa} also holds if $n=3$. Applying~\reqref{ar5} and \relem{exp.M}(\ref{it:M4}), for all $n\geq 3$, we obtain:
\begin{equation*}
\alpha= (n-2)(2r_{2,2}+r_{2,3})= -(n-2)(2r_{3}+r_{2})=(n-2)k_4,
\end{equation*}
which using~\reqref{coeffcaseab} and~\reqref{calcdeltaa} implies that $m=nk_4$. This completes the proof of the theorem.
\end{proof}

\section{Generalisation to several factors}

In this section, we turn our attention to the case of the short exact sequence~\reqref{p} involving mixed braid groups with more than two factors. 

\subsection{The case $q=k-1$}

We start by proving Theorem~\ref{th:split1} which is a straightforward consequence of Theorem~\ref{th:FNsplits}.

\begin{proof}[Proof of Theorem~\ref{th:split1}] 
Let $M=\mathbb{T}$ or $\mathbb{K}$, and assume that $q=k-1$, so the homomorphism in the short exact sequence~(\ref{eq:p}) is $p_{\ast}\colon\thinspace B_{n_1,\ldots,n_{k}}(M) \to B_{n_1}(M)$. 

Suppose first that $n_1$ divides $n_i$ for $i=2,\ldots,k$, in other words there exists $l_{i}\in\mathbb{N}$ such that $n_{i}=l_{i}n_{1}$. In a manner similar to that of the proof of Theorem~\ref{th:FNsplits}, we may construct a cross-section on the level of configuration spaces using the non-vanishing vector field of $M$ as follows. Let $s_{n,l}\colon \thinspace D_n(M) \to D_{ln}(M\setminus \left\lbrace x_{1},\ldots, x_{n}\right\rbrace)$ be the map defined by:
\begin{equation}\label{eq:sec2}
s_{n,l}(x)=(s_{1}(x),\ldots,s_{n}(x)),
\end{equation}
where for $i=1,\ldots,n$, the map $s_{i}$ is defined by~(\ref{eq:sec1}). Then the map $s \colon \thinspace D_{n_{1}}(M) \longrightarrow D_{n_{1},l_{2}n_{1},\ldots,l_{k}n_{1}}(M)$ defined by $s(x)=(x,s_{n_{1},l_{2}}(x),\ldots, s_{n_{1},l_{k}}(x))$ for all $x\in D_{n_{1}}(M)$ is well defined and continuous, and it is a cross-section for $p$. Hence the induced homomorphism $s_{\ast} \colon \thinspace B_{n_{1}}(M) \longrightarrow B_{n_{1},l_{2}n_{1},\ldots, l_{k}n_{1}}(M)$ is a section for $p_{\ast}$. 

Conversely, suppose that $s_{\ast} \colon \thinspace B_{n_{1}}(M)\longrightarrow B_{n_1,\ldots,n_{k}}(M)$ is a section for $p_{\ast}$. For $i=2,\ldots, k$, let $(p_i)_{\ast} \colon \thinspace B_{n_1,n_{i}}(M)\longrightarrow B_{n_{1}}(M)$ (resp.\ $q_{i}\colon \thinspace B_{n_1,\ldots,n_{k}}(M)\longrightarrow B_{n_{1},n_{i}}(M)$) be the projection obtained by forgetting the second block (resp.\ all of the blocks with the exception of the $1$\textsuperscript{st} and the $i$\textsuperscript{th} block). Then $(p_i)_{\ast}\circ q_i= p_{\ast}$, and it follows that $q_{i}\circ s_{\ast}$ is a section for $(p_i)_{\ast}$. So by Theorem~\ref{th:FNsplits}, $n_{1}$ divides $n_{i}$ for all $i=2,\ldots,k$ as required.
\end{proof}

We shall make use of the following lemma to prove Theorem~\ref{th:split2}.

\begin{lem}\label{lem:induced}
Let $k\geq 2$, let $n_{1},\ldots,n_{k}\in \mathbb{N}$, let $s=\sum_{i=2}^{k-1} n_{i}$ and $n=n_{1}+s$. Let $\iota_{1} \colon\thinspace B_{n_{1},\ldots,n_{k}}(M) \to B_{n_{1},s,n_{k}}(M)$ and $\iota_{2} \colon\thinspace B_{n_{1},s,n_{k}}(M) \to B_{n,n_{k}}(M)$ denote the corresponding inclusions, and let $K=B_{n_{k}}(M\setminus \brak{x_{1},\ldots, x_{n}})$. Then $\iota_{1}$ and $\iota_{2}$ induce injective homomorphisms $\widehat{\iota}_{1} \colon\thinspace B_{n_{1},\ldots,n_{k}}(M)/\Gamma_{2}(K) \to B_{n_{1},s,n_{k}}(M)/\Gamma_{2}(K)$ and $\widehat{\iota}_{2} \colon\thinspace B_{n_{1},s,n_{k}}(M)/\Gamma_{2}(K) \to B_{n,n_{k}}(M)/\Gamma_{2}(K)$.
\end{lem}

\begin{proof}
Let $f\colon\thinspace B_{n,n_{k}}(M) \longrightarrow B_{n}(M)$ be the homomorphism given geometrically by forgetting the last $n_{k}$ strings, and let $g\colon\thinspace B_{n_{1},s,n_{k}}(M) \longrightarrow B_{n_{1},s}(M)$ and $h\colon\thinspace B_{n_{1},\ldots,n_{k}}(M) \longrightarrow B_{n_{1},\ldots,n_{k-1}}(M)$ denote the restriction of $f$ to $B_{n_{1},\ldots,n_{k}}(M)$ and to $B_{n_{1},s,n_{k}}(M)$ respectively. Then $K= \ker{f}= \ker{g}=\ker{h}$, and we have the following commutative diagram of short exact sequences:
\begin{equation*}
\begin{tikzcd}
1 \arrow{r} & K \arrow{r} \arrow[equal]{d} & B_{n_{1},\ldots,n_{k}}(M) \arrow{r}{h} \arrow{d}{\iota_{1}} & B_{n_{1},\ldots,n_{k-1}}(M) \arrow{r} \arrow{d}{\overline{\iota}_{1}} & 1\\
1 \arrow{r} & K \arrow{r} \arrow[equal]{d} & B_{n_{1},s,n_{k}}(M) \arrow{r}{g} \arrow{d}{\iota_{2}}  & B_{n_{1},s}(M) \arrow{r} \arrow{d}{\overline{\iota}_{2}} & 1\\
1 \arrow{r} & K \arrow{r} & B_{n,n_{k}}(M) \arrow{r}{f}  & B_{n}(M) \arrow{r} & 1,
%
\end{tikzcd}
\end{equation*}
where $\overline{\iota}_{1} \colon\thinspace B_{n_{1},\ldots,n_{k-1}}(M) \to B_{n_{1},s}(M)$ and $\overline{\iota}_{2} \colon\thinspace B_{n_{1},s}(M) \to B_{n}(M)$ denote the corresponding inclusions. This gives rise to the following commutative diagram of short exact sequences:
\begin{equation}\label{eq:commdiaginduced}
\begin{tikzcd}
1 \arrow{r} & K\ab \arrow{r} \arrow[equal]{d} & B_{n_{1},\ldots,n_{k}}(M)/\Gamma_{2}(K) \arrow{r}{\widehat{h}} \arrow{d}{\widehat{\iota}_{1}} & B_{n_{1},\ldots,n_{k-1}}(M) \arrow{r} \arrow{d}{\overline{\iota}_{1}} & 1\\
1 \arrow{r} & K\ab \arrow{r} \arrow[equal]{d} & B_{n_{1},s,n_{k}}(M)/\Gamma_{2}(K) \arrow{r}{\widehat{g}} \arrow{d}{\widehat{\iota}_{2}}  & B_{n_{1},s}(M) \arrow{r} \arrow{d}{\overline{\iota}_{2}} & 1\\
1 \arrow{r} & K\ab \arrow{r} & B_{n,n_{k}}(M)/\Gamma_{2}(K) \arrow{r}{\widehat{f}}  & B_{n}(M) \arrow{r} & 1,\\
%
\end{tikzcd}
\end{equation}
where $\widehat{\iota}_{1}, \widehat{\iota}_{2}, \widehat{f}, \widehat{g}$ and $\widehat{h}$ are the homomorphisms induced by $\iota_{1}, \iota_{2}, f,g$ and $h$ respectively. For $i=1,2$, the injectivity of $\widehat{\iota}_{i}$ is a consequence of that of $\overline{\iota}_{i}$ and of standard diagram-chasing arguments. 
\end{proof}


We now prove Theorem~\ref{th:split2}. The techniques are similar to those of the proof of Theorem~\ref{th:FNsplits}, and we will also use some of the results of Section~\ref{sec:FNsplits} in conjunction with Lemma~\ref{lem:induced}. 

\begin{proof}[Proof of Theorem~\ref{th:split2}] 
Let $M$ be the $2$-torus or the Klein bottle, let $p\colon\thinspace D_{n_1,\ldots,n_{k}}(M) \to D_{n_1,\ldots,n_{k-1}}(M)$ be the projection given by forgetting the last $n_k$ points, and consider the induced homomorphism $p_{\ast}\colon\thinspace B_{n_{1},\ldots,n_{k}}(M) \longrightarrow B_{n_{1},\ldots,n_{k-1}}(M)$. 
\begin{enumerate}[(a)]
\item Suppose that there exist $l_{1},\ldots,l_{k-1}\in\mathbb{N}$ such that $n_{k}=l_{1}n_{1}+\cdots+ l_{k-1}n_{k-1}$. Once more,  the existence of a non-vanishing vector field guarantees the existence of a cross-section on the level of configuration spaces. More precisely, let $x\in D_{n_{1},\ldots,n_{k-1}}(M)$, where $x=(x_{n_{1}},\ldots, x_{n_{k-1}})$ and $x_{n_{i}}\in D_{n_{i}}(M)$ for $i=1,\ldots,k-1$. With the notation of~(\ref{eq:sec2}), let $s\colon\thinspace D_{n_{1},\ldots,n_{k-1}}(M)\longrightarrow D_{n_{1},\ldots,n_{k-1},n_k}(M)$ be the map defined by:
\begin{equation*}
\text{$s(x)=\left(x,s_{n_{1},l_{1}}(x_{n_{1}}),\ldots,s_{n_{k-1},l_{k-1}}(x_{n_{k-1}})\right)$ for all $x\in D_{n_{1},\ldots,n_{k-1}}(M)$.}
\end{equation*}
Then $s$ is a cross-section for $p$, and the induced homomorphism $s_{\ast}\colon\thinspace B_{n_{1},\ldots,n_{k-1}}(M)\longrightarrow B_{n_{1},\ldots,n_{k-1},n_k}(M)$ is a section for $p_{\ast}$.


\item Let $k\geq 2$, and let $n_1,\ldots,n_k\in \mathbb{N}$.
Suppose that $p_{\ast}\colon\thinspace B_{n_{1},\ldots,n_{k}}(M) \longrightarrow B_{n_{1},\ldots,n_{k-1}}(M)$ admits a section $s_{\ast}$. 
If $n_1=1$ then it suffices to take $l_1=n_k$ and $l_2=\cdots= l_{k-1}=0$. So suppose that $n_1\geq 2$. We first determine a generating set and some relations of the group $B_{n_{1},\ldots,n_{l}}(M)$, 
where $l\in \{k-1,k\}$. Let $l\geq 1$.
Using induction on $l$, applying the methods of~\cite[Proposition~1, p.139]{J} to the short exact sequence~(\ref{eq:p}) with $q=1$, and arguing as in the proof of Proposition~\ref{prop:B_{n,m}}, one may show that:
\begin{align*}
&\left\{a_{i},b_{i},\,n_{1}+1\leq i \leq n_{1}+\cdots+n_{l} \right\}\cup \left\{C_{i,j},\, 1\leq i < j,\,n_{1}+1\leq j\leq n_{1}+\cdots+n_{l} \right\}\cup\\
&\left\{a,b\right\}\cup \left\{\text{$\sigma_{i}$, where $1\leq i \leq n_{1}+\cdots+n_{l}-1$, and $i\neq \sum^{r}_{t=1} n_{t}$ for $r=1,\ldots, l-1$}  \right\}
\end{align*}
is a generating set for $B_{n_{1},\ldots,n_{l}}(M)$. If $M=\mathbb{T}$ (resp.\ $M=\mathbb{K}$), set:
\begin{equation}\label{eq:defS1S2}
\text{$S_{1}=bab^{-1}a^{-1}$ (resp.\ $S_{1}=ba^{-1}b^{-1}a^{-1}$), and  $S_{2}=\sigma_{1}\cdots\sigma_{n_{1}-2} \sigma^{2}_{n_{1}-1}\sigma_{n_{1}-2}\cdots\sigma_{1}$.}
\end{equation}
 As for relation~(\ref{it:Bnm1}) of Proposition~\ref{prop:B_{n,m}}, the surface relation of $B_{n_{1},\ldots,n_{l}}(M)$ may be written as:
\begin{equation}\label{eq:sur0}
S^{-1}_{2}S_{1}=\prod^{n_{1}+\cdots+ n_{l}}_{i=n_{1}+1}C_{1,i}C^{-1}_{2,i}.
\end{equation}
In what follows, let $n=\sum_{i=1}^{k-1} n_{i}$. By~(\ref{eq:p}), $\ker{p_{\ast}}$ may be identified with $B_{n_{k}}(M\setminus \{ x_{1},\ldots, x_{n} \})$. Let $G=B_{n_{1},\ldots,n_{k}}(M)/\Gamma_{2}(\ker{p_{\ast}})$. Then $p_{\ast}$ induces a short exact sequence:
\begin{equation}\label{eq:sesgen}
1\to (\ker{p_{\ast}})\ab  \to G \stackrel{\widehat{p}_{\ast}}{\to} B_{n_{1},\ldots,n_{k-1}}(M)\to 1,
\end{equation}
where $\widehat{p}_{\ast} \colon\thinspace G  \to B_{n_{1},\ldots,n_{k-1}}(M)$ is the homomorphism induced by $p_{\ast}$. 
Note that~(\ref{eq:sesgen}) is the upper row of~(\ref{eq:commdiaginduced}), and $\widehat{p}_{\ast}= \widehat{g}$. Using the hypothesis that $s_{\ast}$ is a section for $p_{\ast}$, there exists a section $\widehat{\phi} \colon\thinspace B_{n_{1},\ldots,n_{k-1}}(M) \to G$ for $\widehat{p}_{\ast}$ induced by $s_{\ast}$.
Making use of Proposition~\ref{prop:B_{n,m}ab} and the proof of Proposition~\ref{prop:kernelab}, we obtain the following information in $G$:
\begin{enumerate}[\textbullet]
\item using the Artin relations, we see that $\sigma_{i}=\sigma_{i+1}$ in $G$ for all $n+1\leq i \leq n +n_{k}-2$: we denote the $\Gamma_{2}(\ker{p_{\ast}})$-coset of $\sigma_{i}$ by $\sigma$.

\item for $n+1\leq i<j \leq n +n_{k}$, $C_{i,j}=1$ in $G$, and $\sigma$ is of order $2$ in $G$. 

\item for $n+1\leq i \leq n+n_{k}-1$, $a_{i}=a_{i+1}$ and $b_{i}=b_{i+1}$: we denote the $\Gamma_{2}(\ker{p_{\ast}})$-cosets of these elements by $x$ and $y$ respectively.

\item for $1\leq l \leq n$ and $n+1\leq j \leq n+n_{k}-1$, we have $C_{l,j}=C_{l,j+1}$: we denote the coset of these elements by $\rho_{l}$, where $\rho_{1}=1$ if $M=\mathbb{T}$ and $\rho_{1}=x^{2}$ if $M=\mathbb{K}$. To simplify further the notation in what follows, we set $\rho_{n+1}=1$.
\item $(\ker{p_{\ast}})\ab$ is isomorphic to $\mathbb{Z}_{2}\oplus \mathbb{Z}^{n+1}$, and the factors of this decomposition are generated by the elements $\sigma, x,y,\rho_{2},\ldots,\rho_{n}$. In particular, in $G$, these elements commute pairwise, and the notion of canonical form defined just after \req{phi1a} carries over to the situation of the short exact sequence~(\ref{eq:sesgen}). 
\end{enumerate}
By Lemma~\ref{lem:induced}, if $w\in G$ is such that it becomes the trivial element when viewed as an element of $B_{n,n_{k}}(M)/\Gamma_{2}(\ker{p_{\ast}})$ then $w$ is itself trivial. In particular, if we take $m=n_{k}$, then the relations of Proposition~\ref{prop:B_{n,m}ab} that exist as expressions in $G$ are also relations in $G$. In particular, the following relations are valid in $G$: 
\begin{enumerate}[(i)]
\item\label{it:sur1} $S_{2}^{-1}S_{1}=\begin{cases}
\left( \prod^{n}_{i=n_{1}+1}C_{1,i}C^{-1}_{2,i}\right) \rho^{-n_{k}}_{2} & \text{if $M=\mathbb{T}$}\\
\left(\prod^{n}_{i=n_{1}+1}C_{1,i}C^{-1}_{2,i}\right) \rho^{-n_{k}}_{2}x^{2n_{k}} & \text{if $M=\mathbb{K}$.}
\end{cases}$
\item\label{it:sur2} For $n_{1}+1\leq i \leq n$, we have:
\begin{enumerate}[(a)]
    
\item\label{it:sur2a} $a^{-1}_{i}ya_{i}=y\rho^{-1}_{i}\rho_{i+1}$, $b^{-1}_{i}yb_{i}=\begin{cases}
y & \text{if $M=\mathbb{T}$}\\
y\rho_{i}\rho^{-1}_{i+1} & \text{if $M=\mathbb{K}$}
\end{cases}$ and $b^{-1}_{i}xb_{i}=\begin{cases}
x\rho_{i}\rho^{-1}_{i+1} & \text{if $M=\mathbb{T}$}\\
x\rho^{-1}_{i}\rho_{i+1}& \text{if $M=\mathbb{K}$.}
\end{cases}$
    
\item\label{it:sur2b} for $1\leq q \leq n$, $a^{-1}_{i}\rho_{q}a_{i}=\rho_{q}$, and:
\begin{equation*}
b^{-1}_{i}\rho_{q}b_{i}= \begin{cases}
\rho_{q} & \text{if $M=\mathbb{T}$, or if $M=\mathbb{K}$ and $i<q$}\\
\rho^{-2}_{i}\rho^{2}_{i+1}\rho_{q} & \text{if $M=\mathbb{K}$ and $i\geq q$.}
\end{cases}
\end{equation*}  
\end{enumerate}
\item\label{it:sur3} $a$ commutes with $\sigma,x$ and $\rho_{q}$, where $2\leq q\leq n$, and $\sigma$ commutes with $b_{j}$ for $n_{1}+1\leq j\leq n$. If $M=\mathbb{T}$, then $a^{-1}ya=y\rho_{2}$, $aya^{-1}=y\rho_{2}^{-1}$, $b^{-1}xb=x \rho_{2}^{-1}$, $bxb^{-1}=x \rho_{2}$, and $b$ commutes with $y$. If $M=\mathbb{K}$ then $a^{-1}ya=yx^{-2}\rho_{2}$, $aya^{-1}=yx^{2}\rho_{2}^{-1}$, $b^{-1}xb=x^{-1} \rho_{2}$, $bxb^{-1}=x^{-1} \rho_{2}^{-1}$, $b^{-1}yb=y x^{2}\rho_{2}^{-1}$ and $byb^{-1}=y x^{-2}\rho_{2}$. 
\end{enumerate}
Relation~(\ref{it:sur1}) follows from~(\ref{eq:sur0}), and relations~(\ref{it:sur2})(\ref{it:sur2a}) (resp.\ relations~(\ref{it:sur2})(\ref{it:sur2b})) follow from relations~(\ref{it:puras2}),~(\ref{it:puras6}) and~(\ref{it:puras7}) (resp.\ relations~(\ref{it:puras3}) and~(\ref{it:puras8})) of Theorem~\ref{th:puras}, using the above information about $G$, and notably the fact that the $\Gamma_{2}(\ker{p_{\ast}})$-cosets of $a_{j},b_{j}$ and $C_{i,j}$ are $x,y$ and $\rho_{i}$ respectively. Relations~(\ref{it:sur3}) are consequences of Proposition~\ref{prop:B_{n,m}ab} and Remarks~\ref{rem:misc}(\ref{it:miscc}). One may check that if $M=\mathbb{K}$ and $i\geq q$ then $b_{i}\rho_{q}b_{i}^{-1}=
\rho^{-2}_{i}\rho^{2}_{i+1}\rho_{q}$.

To complete the proof of part~(\ref{it:split2b}), we follow the strategy of the proof of Theorem~\ref{th:FNsplits} by studying the images of some of the relations of $B_{n_{1}, \ldots, n_{k-1}}(M)$ under the homomorphism $\widehat{\phi}$. We may write the images of the elements $a,b$ and $\sigma_{i}$, where $1\leq i \leq n-1$ and $i\neq \sum^{r}_{t=1} n_{t}$ for $r=1,\ldots, k-2$, 
in the form of equation~(\ref{eq:exp}), where $n$ is taken to be equal to $\sum_{i=1}^{k-1} n_i$. Similarly, for $n_{1}+1\leq j \leq n$, we set:
\begin{equation*}
\widehat{\phi}(b_{j})=b_{j}\cdot x^{t_{j}}y^{p_{j}}\sigma^{s_{j,0}}\rho^{s_{j,2}}_{2}\cdots \rho^{s_{j,n}}_{n},
\end{equation*}
where $t_{j},p_{j},s_{j,2},\ldots, s_{j,n}\in \mathbb{Z}$, and $s_{j,0}$ is defined modulo $2$. 
With appropriate restrictions on $i$ and $j$, the conclusions of Lemmas~\ref{lem:exp.M1} and~\ref{lem:exp.M} are also valid here. More precisely, set $t=n_1$, $s=\sum_{i=2}^{k-1} n_i$, so $t+s=n$, and $m=n_k$ as in the statement of Lemma~\ref{lem:induced}, and let $\Gamma'= \left\{ \,\sum^{r}_{t=1} n_{t} \left\lvert \; r=1,\ldots, k-2\right.\right\}$. It follows from that lemma that if $w$ is an element of $G$ for which either $\widehat{\iota}_{1}(w)$ is a relator in  $B_{n_{1},s,n_{k}}(M)/\Gamma_{2}(K)$ or $\widehat{\iota}_{2}\circ \widehat{\iota}_{1}(w)$ is a relator in  $B_{n,n_{k}}(M)/\Gamma_{2}(K)$ then $w$ is a relator in $G$. In particular, in the current setting:
\begin{enumerate}[(a)]
\item for all $t\geq 4$ (resp.\ $t\geq 3$), the conclusion of Lemma~\ref{lem:exp.M1}(\ref{it:exp.M1a}) (resp.\ Lemma~\ref{lem:exp.M1}(\ref{it:exp.M1b})) holds for all $i,j\in \brak{1,\ldots,n-1}\setminus \Gamma'$ (resp.\ for all $1\leq i \leq t-2$).

\item the conclusion of Lemma~\ref{lem:exp.M} holds.

\item the conclusion of Lemma~\ref{lem:phiartin} remains valid when $n$ is replaced by $t=n_{1}$ and $m$ is replaced by $\sum_{i=2}^{k} n_i$. 
\end{enumerate}
Taking $l=k-1$, let us study the surface relation~\reqref{sur0} of $B_{n_{1}, \ldots, n_{k-1}}(M)$ 
using~(\ref{eq:defS1S2}). 
By~(\ref{eq:relny}) and~(\ref{eq:relnw}), the canonical form of $\widehat{\phi}(S_{2}^{-1}S_{1})$ is given by:
\begin{equation}\label{eq:canformS2S1}
\widehat{\phi}(S^{-1}_{2}S_{1})=S^{-1}_{2}S_{1} \ldotp w'\rho^{\alpha+\delta}_{2},
\end{equation}
where $w'$ is a word in $x,y,\sigma,\rho_3,\ldots, \rho_n$. Using~(\ref{eq:defalpha}), (\ref{eq:ar4}) and Lemmas~\ref{lem:phiartin}(\ref{it:phiartin0}) and~\ref{lem:exp.M}(\ref{it:M4}), it follows that $\alpha=(n_{1}-2)k_{4}$. By~(\ref{eq:calcdeltaa}) and Lemma~\ref{lem:exp.M}(\ref{it:M3}), we have $\delta=2k_{4}$, 
and it follows that the exponent of $\rho_{2}$ in the canonical form of $\widehat{\phi}(S^{-1}_{2}S_{1})$ given in~\reqref{canformS2S1} is equal to $n_{1} k_{4}$. 
 
To compute the exponent of $\rho_{2}$ in the canonical form of $\widehat{\phi}(\prod^{n}_{j=n_{1}+1}C_{1,j}C^{-1}_{2,j})$, we first study $C_{1,j}C^{-1}_{2,j}$ in $P_{n}(M)$ for $j=n_{1}+1,\ldots, n$. For such values of $j$, taking $i=1$ in relation~(\ref{it:puras2}) of Theorem~\ref{th:puras} and recalling that $a=a_{1}$, we see that $a^{-1} b_{j}a= b_{j} a_{j}C_{1,j}^{-1}C_{2,j}a_{j}^{-1}$, and thus:
\begin{equation}\label{eq:C2invC1}
C_{2,j}^{-1}C_{1,j}= a_{j}^{-1} a^{-1} b_{j}^{-1} a b_{j} a_{j},
\end{equation}
using the fact that $a$ and $a_{j}$ commute by relation~(\ref{it:puras1}) of Theorem~\ref{th:puras}. On the other hand, taking $i=j=1$ and $k=j$ in relation~(\ref{it:puras3}) of Theorem~\ref{th:puras}, we see that $a^{-1} C_{1,j}a= a_{j} C_{2,j}^{-1} C_{1,j} a_{j}^{-1} C_{2,j}$, and using the fact that $a$ commutes with $C_{2,j}$ by the same relation, we obtain $C_{1,j}C_{2,j}^{-1}= a a_{j} C_{2,j}^{-1} C_{1,j} a_{j}^{-1}a^{-1}$. Substituting~(\ref{eq:C2invC1}) in this equation, in $P_{n}(M)$ it follows that:
\begin{equation}\label{eq:C1C2inv}
C_{1,j}C_{2,j}^{-1}= b_{j}^{-1} a b_{j} a^{-1}.
\end{equation}
Let us compute $\widehat{\phi}(b_{j}^{-1} a b_{j} a^{-1})$. Note that $j\geq 3$ since $n_{1}\geq 2$.
\begin{enumerate}[\textbullet]
\item If $M=\mathbb{T}$, using relations~(\ref{it:sur2})(\ref{it:sur2a}),~(\ref{it:sur2})(\ref{it:sur2b})  and~(\ref{it:sur3}) above, we have:
\begin{align}
\widehat{\phi}(b^{-1}_{j}ab_{j}a^{-1})&= y^{-p_{j}}x^{-t_{j}}b^{-1}_{j}ax^{k_{1}}y^{k_{2}}b_{j}x^{t_{j}-k_{1}}y^{p_{j}-k_{2}} a^{-1}\notag\\
&=b^{-1}_{j}y^{-p_{j}}x^{-t_{j}}w_{j,j+1}ax^{k_{1}}y^{k_{2}}b_{j}x^{t_{j}-k_{1}}y^{p_{j}-k_{2}} a^{-1}\notag\\
&=b^{-1}_{j}ax^{k_{1}-t_{j}}y^{k_{2}-p_{j}}\rho^{-p_{j}}_{2}w_{j,j+1}b_{j}x^{t_{j}-k_{1}}y^{p_{j}-k_{2}} a^{-1} =b^{-1}_{j}ab_{j}\rho^{-p_{j}}_{2}w'_{j,j+1}a^{-1}\notag\\
&=b^{-1}_{j}ab_{j}a^{-1}\rho^{-p_{j}}_{2}w'_{j,j+1}, \label{eq:phiT1}
\end{align}
where 
$w_{j,j+1}$ and $w'_{j,j+1}$ are words in $\rho_{j}$ and $\rho_{j+1}$.

\item If $M=\mathbb{K}$, since $j\geq 3$, for all $k\leq j$, we have $b_{j}^{-1} \rho_{k} b_{j}= v\rho_{k}$, where $v$ is a word in $\rho_{j}$ and $\rho_{j+1}$. Then by relations~(\ref{it:sur2})(\ref{it:sur2a}),~(\ref{it:sur2})(\ref{it:sur2b})  and~(\ref{it:sur3}) above, we obtain:
\begin{align}
\widehat{\phi}(b^{-1}_{j}ab_{j}a^{-1})&= y^{-p_{j}}x^{-t_{j}}b^{-1}_{j}ax^{k_{1}}y^{k_{2}}b_{j}x^{t_{j}-k_{1}}y^{p_{j}-k_{2}} a^{-1}w_{j,j+1}''\notag\\
&=b^{-1}_{j}y^{-p_{j}}x^{-t_{j}}w_{j,j+1}ax^{k_{1}}y^{k_{2}}b_{j}x^{t_{j}-k_{1}}y^{p_{j}-k_{2}} a^{-1} w_{j,j+1}''\notag\\
&=b^{-1}_{j}ax^{k_{1}-t_{j}}y^{k_{2}-p_{j}}\rho^{-p_{j}}_{2}w_{j,j+1}b_{j}x^{t_{j}-k_{1}}y^{p_{j}-k_{2}} a^{-1} w_{j,j+1}''\notag\\
&=b^{-1}_{j}ab_{j}\rho^{-p_{j}}_{2}w_{j,j+1}a^{-1}w_{j,j+1}'' = b^{-1}_{j} ab_{j}a^{-1}\rho^{-p_{j}}_{2}w'_{j,j+1}, \label{eq:phiK1}
\end{align}
where $w_{j,j+1}, w'_{j,j+1}$ and $w''_{j,j+1}$ are words in $\rho_{j}$ and $\rho_{j+1}$.
\end{enumerate}
Using~(\ref{eq:sur0}) with $l=k-1$ for both $M=\mathbb{T}$ and $M=\mathbb{K}$,  and~(\ref{eq:C1C2inv})--(\ref{eq:phiK1}), we see that:
\begin{align}
\widehat{\phi}(S_{2}^{-1}S_{1})&= \widehat{\phi}\left(\prod^{n}_{j=n_{1}+1} C_{1,j} C_{2,j}^{-1}\right)= \widehat{\phi}\left(\prod^{n}_{j=n_{1}+1}b^{-1}_{j}ab_{j}a^{-1}\right)= \prod^{n}_{j=n_{1}+1} b^{-1}_{j} ab_{j}a^{-1}\rho^{-p_{j}}_{2}w'_{j,j+1}\notag\\
&= \left(\prod^{n}_{j=n_{1}+1} C_{1,j} C_{2,j}^{-1} w'_{j,j+1}\right) \rho_{2}^{-\sum^{n}_{j=n_{1}+1} p_{j}},\label{eq:phisurf}
\end{align}
where to obtain the last equality, we have used also relation~(\ref{it:sur2})(\ref{it:sur2b}). 
When we put~(\ref{eq:phisurf}) in canonical form, relations~(\ref{it:sur2})(\ref{it:sur2a}),~(\ref{it:sur2})(\ref{it:sur2b})  and~(\ref{it:sur3}) imply that no new terms in $\rho_{2}$ are introduced during this process. It follows from relation~(\ref{it:sur1}),~\reqref{canformS2S1} and~(\ref{eq:phisurf}) that $n_1k_4-n_k= -\sum^{n}_{j=n_{1}+1} p_{j}$, hence: 
\begin{equation}\label{eq:n1k4}
n_k= n_1k_4+\sum^{n}_{j=n_{1}+1} p_{j}.
\end{equation}

To complete the proof, it remains to compute the terms $p_j$ in~\reqref{n1k4} for $j=n_{1}+1,\ldots, n$. Let $\Gamma= \left\{ \,\sum^{r}_{t=1} n_{t} \left\lvert \; r=1,\ldots, k-1\right.\right\}$. We claim that $l_{i,2}=0$ for all $1\leq i \leq n-1$ and $i\notin \Gamma$. To see this, first note that relations~(\ref{it:s2})--(\ref{it:s8}) of Proposition~\ref{prop:B_{n,m}ab} and relations~(\ref{it:full1})--(\ref{it:full7}) of Theorem~\ref{th:total} hold in our setting, with the exception of those relations involving $\sigma_{i}$ or $\sigma_{j}$, where $1\leq i,j\leq n-1$ and $\{i,j\} \cap \Gamma \neq \varnothing$. If $i=1$ then by considering the relation $b^{-1}\sigma_{1}a= \sigma_{1} a\sigma_{1} b^{-1}\sigma_{1}$ and arguing in a manner similar to that of the proof of Lemma~\ref{lem:exp.M}(\ref{it:M2}), we see that $l_{1,2}=0$.
Now suppose that $2\leq i \leq n-1$ and that $i\notin \Gamma$. Using relations~(\ref{it:puras2})--(\ref{it:puras3}) of Theorem~\ref{th:puras}, for $i<j\leq n$
we have $a^{-1}_{i}b_{j}a_{i}= b_{j}a_{j}C^{-1}_{i,j} C_{i+1,j} a^{-1}_{j}= b_{j}a^{-1}_{i}C_{i+1,j}C^{-1}_{i,j}a_{i}$, and thus $C_{i,j} C^{-1}_{i+1,j}= b^{-1}_{j}a_{i}b_{j}a^{-1}_{i}$. Taking $j=i+1$ in this relation and using the fact that $C_{i,i+1}=\sigma_i^2$, it follows that $\sigma_i^2= b^{-1}_{i+1} a_{i} b_{i+1} a^{-1}_{i}$, and using the equality: 
\begin{equation}\label{eq:bi+1bi}
b_{i+1}= \sigma_i^{-1}b_i \sigma_i^{-1}    
\end{equation}
obtained via relation~(\ref{it:k4}) of Proposition~\ref{prop:kernel}, we obtain:
\begin{align}\label{eq:b.sigma.a}
b^{-1}_{i}\sigma_{i}a_{i}=\sigma_{i}a_{i}\sigma_{i}b^{-1}_{i}\sigma_{i}. 
\end{align}
Let $q_{i}$ denote the exponent of $y$ in the canonical form of $\widehat{\phi}(a_{i})$. Since the exponent of $y$ is the same on both sides of each of the relations~(\ref{it:sur2})(\ref{it:sur2a}),~(\ref{it:sur2})(\ref{it:sur2b}) and~(\ref{it:sur3}), it follows that the exponent of $y$ in the canonical form of $\widehat{\phi}(b^{-1}_{i}\sigma_{i}a_{i})$ (resp.\ of $\widehat{\phi}(\sigma_{i}a_{i}\sigma_{i}b^{-1}_{i}\sigma_{i})$) is equal to $l_{i,2}-p_{i}+q_{i}$ (resp.\ to $3l_{i,2}-p_{i}+q_{i}$). Using the fact that~(\ref{eq:b.sigma.a}) also holds when viewed as a relation in $G$, we deduce that $l_{i,2}=0$, which proves the claim. In a similar manner, if $\sum^{r}_{t=1}n_{t}< i < \sum^{r+1}_{t=1}n_{t}$, where $r=1,\ldots, k-2$, then computing the exponent of $y$ in the image by $\widehat{\phi}$ of~\reqref{bi+1bi}, 
and using the fact that this equality also holds in $G$, 
we obtain $p_{i+1}= p_{i}+2l_{i,2}$, and thus $p_{i}=p_{i+1}$ since $l_{i,2}=0$. So there exists $\alpha_{n_{r+1}} \in \mathbb{Z}$ such that $p_{i}=\alpha_{n_{r+1}}$ for all $\sum^{r}_{t=1}n_{t}< i \leq \sum^{r+1}_{t=1}n_{t}$ and $r=1,\ldots, k-2$.
%
%
We deduce from~\reqref{n1k4} that $n_k=n_1k_4+n_{2}\alpha_{n_{2}}+\cdots +n_{k-1}\alpha_{n_{k-1}}$, and this completes the proof of the theorem. \qedhere
\end{enumerate}
\end{proof}

\subsection{The case $k=3$, $q=1$ and $n_{3}=1$}\label{sec:Bt,s,1}

In Theorem~\ref{th:split2}(\ref{it:split2a}), we obtained a geometric section on the level of configuration spaces by adding new distinct points in accordance with the relation between $n_{k}$ and $n_{1},\ldots,n_{k-1}$ using the non-vanishing vector field on $\mathbb{T}$ and $\mathbb{K}$. However, the algebraic techniques used to prove the relation of Theorem~\ref{th:split2}(\ref{it:split2b}) leave open the possibility that some of the coefficients of $n_{1},\ldots,n_{k-1}$ in that relation be negative, and it is not clear how to interpret this geometrically. In this section, we study the case where $M=\mathbb{T}$ or $M=\mathbb K$, $k=3$, $q=1$ and $n_{3}=1$, which is the situation of Theorem~\ref{th:split3}.  In this case, if $n_{1},n_{2}\geq 2$ are coprime then there exist $l_{1},l_{2}\in \mathbb{Z}$ such that $n_{3}=1=l_{1}n_{1}+l_{2}n_{2}$, and one of $l_{1}$ and $l_{2}$ must be negative. As we shall see, there does not exist a section in this case. This gives some evidence to support the conjecture that the converse of Theorem~\ref{th:split2}(\ref{it:split2a}) is true, namely that a section on the algebraic level is induced by a geometric section via the non-vanishing vector field, or in other words, the coefficients of $n_{1},\ldots,n_{k-1}$ in the statement of Theorem~\ref{th:split2}(\ref{it:split2b}) must in fact be non negative.


Let $M$ be the $2$-torus or the Klein bottle, let $k=3$, $q=1$ and $n_{3}=1$, let $n_{1}=t$, $n_{2}=s$, where $t,s\geq 2$, and let $n=t+s$. We study the projection $p_{\ast}\colon\thinspace B_{t,s,1}(M)\longrightarrow B_{t,s}(M)$. In order to prove Theorem~\ref{th:split3}, namely that $p_{\ast}$ does not admit a section, the idea is to assume on the contrary that there exists a section $\phi\colon\thinspace B_{t,s}(M) \to B_{t,s,1}(M)$, and to study the induced homomorphism of certain quotients of the two groups, for example by $\Gamma_{l}(P_{n}(M))$ and $\Gamma_{l}(P_{n+1}(M))$ respectively. If $l=2$, it turns out that this induced homomorphism admits a section, and so with our methods, we need to take a larger value of $l$. As we shall see, $l=3$ will be sufficient.
We will make use of the framework of Section~\ref{sec:contas} and the commutative diagram~(\ref{eq:basic}), where we take $s\neq 0$, $m=1$, $H=\Gamma_{3}(P_{n+1}(M))$, $H'=\Gamma_{3}(P_{n}(M))$ and $X=\brak{n+1}$. In order to apply the results of that section, we must first check that conditions~(\ref{eq:relnsa})--(\ref{eq:relnsd}) are satisfied. Since $m=1$,  relation~(\ref{eq:relnsa}) follows from Proposition~\ref{prop:prespmminusn}, and relation~(\ref{eq:relnsc}) holds trivially because $\sigma=1$. To check that relations~(\ref{eq:relnsb}) and~(\ref{eq:relnsd}) hold in our setting, we first give some information about the quotient groups ${B_{t,s}(M)}/{\Gamma_{3}(P_{n}(M))}$ and ${B_{t,s,1}(M)}/{\Gamma_{3}(P_{n+1}(M))}$. If $u$ and $v$ are two elements of a group, let $[u,v]=uvu^{-1}v^{-1}$ denote their commutator.

\begin{prop}\label{prop:Cij} 
Let $M$ be the $2$-torus or the Klein bottle. Then $C_{i,j}\in \Gamma_{2}(P_{n}(M))$ for all $1\leq i < j \leq n$.
\end{prop}

\begin{proof} 
Let $1\leq i < j \leq n$. By relation~(\ref{it:puras2}) of Theorem~\ref{th:puras}, we have:
\begin{equation}\label{eq:cijgamma2}
C^{-1}_{i,j}C_{i+1,j}=a^{-1}_{j}[b^{-1}_{j},a^{-1}_{i}]a_{j}\in \Gamma_{2}(P_{n}(M)).
\end{equation}
Taking $i=j-1$, we see that $C_{j-1,j}\in \Gamma_{2}(P_{n}(M))$, and it follows by reverse induction on $i$ and~(\ref{eq:cijgamma2}) that $C_{i,j}\in \Gamma_{2}(P_{n}(M))$ for all $i=1,\ldots, j-1$.
\end{proof}

\begin{remark}\label{rem:obs.Cij}
Let $1\leq i < j \leq n$ (resp.\ $1\leq i < j \leq n+1$). By Proposition~\ref{prop:Cij}, the element $C_{i,j}$ of  $B_{t,s}(M)/\Gamma_{3}(P_{t+s}(M))$ (resp.\ of $B_{t,s,1}(M)/\Gamma_{3}(P_{t+s+1}(M))$) commutes with the $\Gamma_{3}(P_{t+s}(M))$-coset (resp.\ the $\Gamma_{3}(P_{t+s+1}(M))$-coset) of every element of $P_{t+s}(M)$ (resp.\ of $P_{t+s+1}(M)$).
\end{remark}

%
%

\begin{prop}\label{prop:rel.igual}
Let $M$ be either the $2$-torus or the Klein bottle. The following relations are valid in  ${B_{t,s}(M)}/{\Gamma_{3}(P_{n}(M))}$ and ${B_{t,s,1}(M)}/{\Gamma_{3}(P_{n+1}(M))}$:
\begin{enumerate}
\item\label{igual:0} $a_{i}a_{j}=a_{j}a_{i}$ for $i,j=1,t+1,\ldots,n$.
	
\item\label{igual:1} $b_{j}b_{i}b^{-1}_{j}=\begin{cases}
b_{i} & \text{if $M=\mathbb{T}$ and $i,j=1,t+1,\ldots,n$}\\
a^{-1}_{j}b_{i}a_{j} & \text{if $M=\mathbb{K}$, and either $i=1, j= t+1,\ldots, n$ or $t+1\leq i< j\leq n$.}
\end{cases}$
	

\item\label{igual:2} $a_{n}\sigma_{i}=\sigma_{i}a_{n}$ and $b_{n}\sigma_{i}=\sigma_{i}b_{n}$ for $i=1,\ldots,t-1,t+1,\ldots,n-2$.

	
\item\label{igual:3} $a_{1}\sigma_{i}=\sigma_{i}a_{1}$ and $b_{1}\sigma_{i}=\sigma_{i}b_{1}$ for $i=t+1,\ldots,n-1$.

		
\item\label{igual:4}  $\sigma_{i}\sigma_{j}=\sigma_{j}\sigma_{i}$ for $1\leq i,j\leq n-1$, where $\lvert i-j \rvert\geq 2$ and $i,j\neq t$.
		
\item\label{igual:5} $\sigma_{i}\sigma_{i+1}\sigma_{i}=\sigma_{i+1}\sigma_{i}\sigma_{i+1}$ for $i=1,\ldots, t-2,t+1,\ldots, n-2$.

\item\label{igual:6} $\sigma_{i}a_{i}\sigma^{-1}_{i}b_{i}=b_{i}\sigma_{i}a_{i}\sigma_{i}$ for $i=1,t+1$.

	
\item\label{igual:7} $\sigma^{-1}_{i}a_{i+1}=a_{i}\sigma_{i}$ and $\sigma_{i}b_{i+1}=b_{i}\sigma^{-1}_{i}$ for $i=t+1,\ldots, n-1$.

	
\item\label{igual:8} for $j=t+1,\ldots, n$, $C_{1,j}C^{-1}_{2,j}=\begin{cases}
a^{-1}_{j}b^{-1}_{1}a_{j}b_{1} & \text{if $M=\mathbb{T}$}\\
(a^{-1}_{j}b^{-1}_{1}a_{j}b_{1})^{-1} & \text{if $M=\mathbb{K}$.}
\end{cases}$ This relation also holds for $j=n+1$ in ${B_{t,s,1}(M)}/{\Gamma_{3}(P_{n+1}(M))}$.
\end{enumerate}
\end{prop}

\begin{proof}
With the exception of relation~(\ref{igual:1}) in the case $M=\mathbb{K}$, relation~(\ref{igual:6}) if $i=t+1$ and relation~(\ref{igual:8}), all of the relations given in the statement appear in the presentation of $B_{t,s+1}(M)$ in Proposition~\ref{prop:B_{n,m}}, where we view $B_{t,s,1}(M)$ as a subgroup of $B_{t,s+1}(M)$,
and so are valid in the given quotients. 

If $M=\mathbb{K}$, by relation~(\ref{it:puras6}) of Theorem~\ref{th:puras}, we have $C_{i,j}C_{i+1,j}^{-1}=b_{j^{-1}} b_{i}^{-1} b_{j}b_{i}$, and relation~(\ref{igual:1}) may be obtained by substituting this equality in relation~(\ref{it:puras7}) of Theorem~\ref{th:puras}.
To prove relation~(\ref{igual:6}) for $i=t+1$, by relation~(\ref{it:puras2}) of Theorem~\ref{th:puras} and Remark~\ref{rem:obs.Cij}, we see that $b_{t+2}C_{t+1,t+2}a_{t+1}= a_{t+1}b_{t+2}$. In this equality, we then 
replace $C_{t+1,t+2}$ by $\sigma^{2}_{t+1}$, and $b_{t+2}$ by $\sigma^{-1}_{t+1}b_{t+1}\sigma^{-1}_{t+1}$ using relation~(\ref{igual:7}), and this yields the given relation.
Finally, to prove relation~(\ref{igual:8}) we use  relation~(\ref{it:puras7}) of Theorem~\ref{th:puras} and Remark~\ref{rem:obs.Cij}.
\end{proof}

We now list the equations that we will use presently to put certain relations of the quotient group ${B_{t,s}(M)}/{\Gamma_{3}(P_{n}(M))}$ in canonical form.
 
\begin{prop}\label{prop:rel} 
We have the following relations in $B_{t,s,1}(M)/\Gamma_{3}(P_{n+1}(M))$:
\begin{enumerate}
\item\label{rel.1} $C_{l,j}$ commutes with $a_{i}$ and $b_{i}$ for all $i,l,j$ for all $1\leq l<j\leq n+1$ and $i=1,t+1,\ldots, n+1$.

	
\item\label{rel.2} for $i=1, t+1,\ldots, n$, $b_{n+1}a_{i}=a_{i}b_{n+1}C^{-1}_{i,n+1}C_{i+1,n+1}$, and:
\begin{equation*}
a_{n+1}b_{i}=\begin{cases}
b_{i}a_{n+1}C_{i,n+1}C^{-1}_{i+1,n+1} & \text{if $M=\mathbb{T}$}\\
b_{i}a_{n+1}(C_{i,n+1}C^{-1}_{i+1,n+1})^{-1} & \text{if $M=\mathbb{K}$.}
\end{cases}
\end{equation*}
	
\item\label{rel.3} 
$b_{n+1}a_{n+1}=\begin{cases}
a_{n+1}b_{n+1}C_{1,n+1} & \text{if $M=\mathbb{T}$}\\
a^{-1}_{n+1}b_{n+1}C_{1,n+1} & \text{if $M=\mathbb{K}$.} 
\end{cases}$

	
\item\label{rel.a} $a_{n+1}a_{i}=a_{i}a_{n+1}$ for $i=1,\ldots,n$.

 \item\label{rel.b} for $i=1,\ldots,n$, $b_{n+1}b_{i}=\begin{cases}
 b_{i}b_{n+1} & \text{if $M=\mathbb{T}$}\\
 b_{i}b_{n+1}C_{i,n+1}C^{-1}_{i+1,n+1} & \text{if $M=\mathbb{K}$.}
\end{cases}$
 

	

\item\label{rel.4} 
for $1\leq i\leq n-1$, where $i \neq t$, and $1\leq l<n+1$:\begin{equation*}
\sigma^{-1}_{i}C_{l,n+1}\sigma_{i}=\begin{cases}
C_{l,n+1}& \text{ if $l\neq i+1$}\\ 
C_{l-1,n+1}C^{-1}_{l,n+1}C_{l+1,n+1}&\text{ if $l= i+1$.}
\end{cases}
\end{equation*}
\end{enumerate}
\end{prop}

\begin{proof}
Relation~(\ref{rel.1}) follows from Remark~\ref{rem:obs.Cij}, relation~(\ref{rel.2}) is a consequence of  relations~(\ref{it:puras2}) and~(\ref{it:puras7}) of Theorem~\ref{th:puras} with $j=n+1$ and Remark~\ref{rem:obs.Cij}, relations~(\ref{rel.3}),~(\ref{rel.a}) and~(\ref{rel.b}) may be deduced from 
relation~(\ref{it:puras5}) with $i=n+1$, relation~(\ref{it:puras1}) with $j=n+1$, and relation~(\ref{it:puras6}) with $j=n+1$ of Theorem~\ref{th:puras} respectively, and relation~(\ref{rel.4}) is a consequence of relation~(\ref{it:k5}) of Proposition~\ref{prop:kernel}. 

To obtain relation~(\ref{rel.4}), we view $B_{t,s,1}(M)$ as a subgroup of $B_{t,s+1}(M)$ and we make use of the presentation of $B_{t,s+1}(M)$ given in Proposition~\ref{prop:B_{n,m}}. Let $1\leq l <n+1$. If $t+1\leq i\leq t+s-1$ then relation~(\ref{rel.4}) is obtained using relation~(\ref{it:k5}) of Proposition~\ref{prop:kernel}, which is one of the relations of Type~I of Proposition~\ref{prop:B_{n,m}}, and if $1\leq i \leq t-1$, relation~(\ref{rel.4}) follows from relation~(\ref{it:Bnm7}) of Proposition~\ref{prop:B_{n,m}}.
\end{proof}

As we mentioned just before \repr{Cij}, relations~(\ref{eq:relnsa}) and~(\ref{eq:relnsc}) of Section~\ref{sec:contas} are satisfied in our setting. Relation~(\ref{eq:relnsb}) follows from Propositions~\ref{prop:rel.igual} and~\ref{prop:rel}, the surface relation is a consequence of Proposition~\ref{prop:B_{n,m}}, and relation~(\ref{eq:relnsd}) follows from Remark~\ref{rem:obs.Cij}. We may thus make use of the results of Section~\ref{sec:contas}.


\begin{prop}\label{prop:can.form} 
Let $z_{1},z_{2}\in B_{t,s,1}(M)/\Gamma_{3}(P_{n+1}(M))$ be such that $z_{1}=a^{p_{1}}_{n+1}b^{p_{2}}_{n+1}C^{p_{3}}_{1,n+1}\cdots C^{p_{n+2}}_{n,n+1}$ and $z_{2}=a^{q_{1}}_{n+1}b^{q_{2}}_{n+1}C^{q_{3}}_{1,n+1}\cdots C^{q_{n+2}}_{n,n+1}$, where $p_{1},\ldots,p_{n+2}, q_{1},\ldots, q_{n+2} \in \mathbb{Z}$. Suppose that $z_{1}=z_{2}$.
\begin{enumerate}[(a)]
\item\label{it:can.forma} If $M=\mathbb{T}$, then $p_{i}=q_{i}$ for all $i=1,\ldots, n+2$.
\item\label{it:can.formb} If $M=\mathbb{K}$, then $p_{1} \equiv q_{1} \bmod{4}, p_{2}=q_{2}$, and $p_{i}\equiv q_{i} \bmod{2}$ for all $i=3,\ldots, n+2$.
\end{enumerate}
In particular, if $M=\mathbb{T}$ or $\mathbb{K}$ then $p_{i}\equiv q_{i} \bmod{2}$ for all $i=1,\ldots, n+2$.
\end{prop}


\begin{proof}
Let $z_{1}$ and $z_{2}$ be as defined in the statement, and suppose that $z_{1}=z_{2}$. Note that $z_{i}\in P_{n+1}(M)/\Gamma_{3}(P_{n+1}(M))$ for $i=1,2$. Let $\rho \colon\thinspace P_{n+1}(M)/\Gamma_{3}(P_{n+1}(M)) \longrightarrow \pi_{1}(M)/\Gamma_{3}(\pi_{1}(M))$ be the homomorphism 
induced by the homomorphism from $P_{n+1}(M)$ to $\pi_{1}(M)$ that geometrically forgets all but the last string. If $M=\mathbb{T}$, $\Gamma_{3}(\pi_{1}(\mathbb{T}))$ is trivial because $\pi_1(\mathbb{T})\cong \mathbb{Z}^2$ is Abelian. If  $M=\mathbb{K}$, then $\pi_1(\mathbb{K})\cong \langle a_{1} \rangle \rtimes \langle b_{1} \rangle$ where both factors are infinite cyclic and the action is the non-trivial one, $\Gamma_{2}(\pi_{1}(\mathbb{K}))=\langle a_{1}^{2} \rangle$ and $\Gamma_{3}(\pi_{1}(\mathbb{K}))=\langle a_{1}^{4} \rangle$ by~\cite[page~19]{GP}. Thus $\pi_{1}(M)/\Gamma_{3}(\pi_{1}(M))$ is isomorphic to $\mathbb{Z}\times \mathbb{Z}$  (resp.\ to $\mathbb{Z}_{4} \rtimes \mathbb{Z}$) if $M=\mathbb{T}$ (resp.\ if $M=\mathbb{K}$). Since  $a^{p_{1}}_{1}b^{p_{2}}_{1}= \rho(z_{1})= \rho(z_{2})= a^{q_{1}}_{1}b^{q_{2}}_{1}$ in $\pi_{1}(M)/\Gamma_{3}(\pi_{1}(M))$, it follows that $p_{i}=q_{i}$ for $i=1,2$ (resp.\ $p_{1}\equiv q_{1} \bmod{4}$ and $p_{2}=q_{2}$) if $M=\mathbb{T}$ (resp.\ if $M=\mathbb{K}$).


For $i=1,\ldots,n$, consider the homomorphism $\rho_{i} \colon\thinspace P_{n+1}(M)/\Gamma_{3}(P_{n+1}(M))\longrightarrow P_{2}(M)/\Gamma_{3}(P_{2}(M))$ induced by the homomorphism from $P_{n+1}(M)$ to $P_{2}(M)$ that geometrically forgets all but the $i$th string and the last string. Then $a^{p_{1}}_{2}b^{p_{2}}_{2}C^{p_{i+2}}_{1,2}= \rho_{i}(z_{1})= \rho_{i}(z_{2})=a^{q_{1}}_{2}b^{q_{2}}_{2}C^{q_{i+2}}_{1,2}$.
\begin{enumerate}[\textbullet]
\item If $M=\mathbb{T}$, from above, we have $p_{i}=q_{i}$ for $i=1,2$, and so $C^{p_{i+2}}_{1,2}= C^{q_{i+2}}_{1,2}$. Now $P_{2}(\mathbb T) \cong \pi_{1}(\mathbb T\setminus\{x_{1}\})\times \mathbb Z^{2}$ by~\cite[Lemma 17]{BGG}, and using the fact that $\pi_{1}(\mathbb{T} \setminus \{x_{1}\})$ is the free group generated by $a_{2}$ and $b_{2}$, it follows that $\Gamma_{i}(P_2(\mathbb T)) \cong \Gamma_{i}(\pi_{1}(\mathbb{T} \setminus \{x_{1}\}))= \Gamma_{i}(\langle a_{2},b_{2}\rangle)$ for all $i\geq 2$. Further, $C_{1,2}=[b^{-1}_{2},a^{-1}_{2}]$ by taking $i=n=2$ in relation~(\ref{it:puras5}) of Theorem~\ref{th:puras}, and by~\cite[page 337, Theorem 5.12]{MKS}, the coset of this element generates the infinite cyclic group $\Gamma_{2}(\langle a_{2},b_{2}\rangle)/\Gamma_{3}(\langle a_{2},b_{2}\rangle)$. 
Using the fact that $P_{2}(\mathbb{T})/\Gamma_{3}(P_{2}(\mathbb{T}))$ is torsion free~\cite[Theorem 4]{BGG}, it follows that $p_{i+2}=q_{i+2}$ for all $i=1,\ldots, n$, and this proves part~(\ref{it:can.forma}).

\item
Let $M=\mathbb{K}$. If $X$ is a subset of a group $G$, let $\langle\!\langle X\rangle\! \rangle_{G}$ denote the normal closure of $X$ in $G$.
First recall that by~\cite[equation~(5.8)]{GP}, $P_{2}(\mathbb K) \cong \pi_{1}(\mathbb K\setminus\{x_{1}\}) \rtimes \pi_{1}(\mathbb K)$, 
where $\pi_{1}(\mathbb K\setminus\{x_{1}\})$ is the free group generated by $a_{2}$ and $b_{2}$. By~\cite[Theorem~5.4]{GP}, for all $m\geq 2$, we have:
\begin{equation}\label{eq:gammam}
\Gamma_{m}(P_{2}(\mathbb{K}))=\langle\!\langle a_{2}^{2^{m-1}}, x^{2^{m-i}}\,:\, x\in \Gamma_{i}(\pi_{1}(\mathbb K\setminus\{x_{1}\})), 2\leq i \leq m\rangle\! \rangle_{\pi_{1}(\mathbb K\setminus\{x_{1}\})} \rtimes \langle (a_{1}a_{2})^{2^{m-1}}\rangle.
\end{equation}
If $m=2,3$, we see from~\reqref{gammam} that $a^{2}_{2}\in\Gamma_{2}(P_2(\mathbb{K}))$ and $a^{4}_{2}\in\Gamma_{3}(P_3(
\mathbb{K}))$. From this and the first two paragraphs of this proof we conclude that $C^{p_{i+2}}_{1,2}= C^{q_{i+2}}_{1,2}$ in $P_{2}(\mathbb{K})/\Gamma_{3}(P_{2}(\mathbb{K}))$. 
Taking $i=n=2$ in relation~(\ref{it:puras5}) of Theorem~\ref{th:puras}, we obtain $C_{1,2}= [b^{-1}_{2},a_{2}]a^{2}_{2}$, so $C_{1,2}\in \Gamma_{2}(P_{2}(\mathbb K))$, and thus $C_{1,2}^{2}\in \Gamma_{3}(P_{2}(\mathbb K))$ by~\reqref{gammam}. 
So to prove the result in this case, it suffices to show that $C_{1,2} \notin \Gamma_{3}(P_{2}(\mathbb{K}))$. Suppose on the contrary that $C_{1,2} \in \Gamma_{3}(P_{2}(\mathbb{K}))$. Then using~\reqref{gammam}, we have:
\begin{equation}\label{eq:exprC12}
[b^{-1}_{2},a_{2}]a^{2}_{2}=C_{1,2}=\left( \prod^{l}_{i=1}\alpha_{i}a_{2}^{4\epsilon_{i}}\alpha^{-1}_{i} \ldotp x_{i}\right) (a_{1}a_{2})^{4p},
\end{equation}
where $p\in \mathbb{Z}$, and for $1\leq i\leq l$, $\alpha_{i}\in\pi_{1}(\mathbb K\setminus\{x_{1}\})$, $x_{i}\in\Gamma_{2}(\pi_{1}(\mathbb K\setminus\{x_{1}\}))$, and $\epsilon_{i}\in \mathbb{Z}$. Taking the image of  $C_{1,2}$ under the projection from $P_{2}(\mathbb{K})$ onto $P_{1}(\mathbb{K})$ given by forgetting the second string, it follows from~\reqref{exprC12} that $a_{1}^{4p}=1$ in $P_{1}(\mathbb{K})$, and so $p=0$.  
So~\eqref{eq:exprC12} is a relation in the free group $\pi_{1}(\mathbb K\setminus\{x_{1}\})$ generated by $a_{2}$ and $b_{2}$, and projecting this equation into
the Abelianisation $\pi_{1}(\mathbb K\setminus\{x_{1}\})/\Gamma_{2}(\pi_{1}(\mathbb K\setminus\{x_{1}\}))$ which is a free Abelian group, we obtain $a^{2}_{2}=a^{4{\sum^{l}_{i=1}\epsilon_{i}}}_{2}$, which yields a contradiction.
We conclude that $C_{1,2} \notin \Gamma_{3}(P_{2}(\mathbb{K}))$, and since $C_{1,2}^{2} \in \Gamma_{3}(P_{2}(\mathbb{K}))$, it follows from the fact that $C^{p_{i+2}}_{1,2}= C^{q_{i+2}}_{1,2}$ in $P_{2}(\mathbb{K})/\Gamma_{3}(P_{2}(\mathbb{K}))$ that $p_{i+2}\equiv q_{i+2} \bmod{2}$,
which proves part~(\ref{it:can.formb}) for $M=\mathbb{K}$. The last part of the statement then follows easily.\qedhere
\end{enumerate}
\end{proof}

We now come back to the section 
$\widehat{\phi}\colon\thinspace B_{t,s}(M)/\Gamma_{3}(P_{n}(M)) \to B_{t,s,1}(M)/\Gamma_{3}(P_{n+1}(M))$ for the induced homomorphism $\widehat{p}_{\ast}\colon\thinspace B_{t,s,1}(M)/\Gamma_{3}(P_{n+1}(M)) \to B_{t,s}(M)/\Gamma_{3}(P_{n}(M))$. It may be defined on the following elements of $B_{t,s}(M)/\Gamma_{3}(P_{n}(M))$ by:
\begin{equation}\label{eq:phi1} 
\begin{cases}
\widehat{\phi}(\sigma_{i})= \sigma_{i}\cdot a^{s_{i,1}}_{n+1} b^{s_{i,2}}_{n+1} C^{r_{i,1}}_{1,n+1}\cdots C^{r_{i,n}}_{n,n+1} & \text{for $i=1,\ldots,t-1,t+1,\ldots n-1$}\\
\widehat{\phi}(a_{i})= a_{i}\cdot a^{\alpha_{i,1}}_{n+1} b^{\alpha_{i,2}}_{n+1} C^{x_{i,1}}_{1,n+1} \cdots C^{x_{i,n}}_{n,n+1} & \text{for $i=1,t+1,t+2,\ldots,n$}\\
\widehat{\phi}(b_{i})= b_{i}\cdot a^{\beta_{i,1}}_{n+1} b^{\beta_{i,2}}_{n+1} C^{y_{i,1}}_{1,n+1} \cdots C^{y_{i,n}}_{n,n+1} & \text{for $i=1,t+1,t+2,\ldots,n$,}
\end{cases}
\end{equation}
where $s_{i,j},r_{i,j},\alpha_{i,j},\beta_{i,j},x_{i,j}, y_{i,j} \in \mathbb{Z}$ for the relevant values of $i$ and $j$. Since we are working with mixed braid groups, $\sigma_{t}$ is not an element of $B_{t,s}(M)$.
If $M=\mathbb{K}$, by Proposition~\ref{prop:can.form}, any conclusion about the coefficients will be modulo $2$. 
So for both $\mathbb{T}$ and $\mathbb{K}$, the computations that follow will be carried out with coefficients in $\mathbb{Z}_{2}$, in accordance with the last part of the statement of that proposition.

\begin{lem}\label{lem:lemgen}
With the above notation, we have:
\begin{enumerate}[(a)]
\item\label{it:lg1} for $i=1,t+1,\ldots,n$ and $k=1,2$, $\alpha_{i,k}\equiv 0$ and $\beta_{i,k}\equiv 0 \bmod{2}$.
\item\label{it:lg2} for $i=1,\ldots,t-1,t+1,\ldots,n-1$ and $k=1,2$, $s_{i,k}\equiv 0 \bmod{2}$. 
\item\label{it:lg3} for $i=1,t+1$, $r_{i,i+1}\equiv 0 \bmod{2}$.
\end{enumerate}
\end{lem}

\begin{proof}\mbox{}
We start by supposing that $i=t+1,\ldots,n$. We will study the coefficients of $a_{n+1}$ and $b_{n+1}$ in the image of relation~(\ref{igual:7})
of Proposition~\ref{prop:rel.igual} by $\widehat{\phi}$. 
Using relations~(\ref{rel.1}),~(\ref{rel.2}),~(\ref{rel.4}) of Proposition~\ref{prop:rel},~(\ref{igual:2}) of Proposition~\ref{prop:rel.igual} and Remark~\ref{rem:obs.Cij}, we have:
\begin{align*}
\widehat{\phi}(\sigma^{-1}_{i}a_{i+1})&= C^{-r_{i,n}}_{n,n+1} \cdots C^{-r_{i,1}}_{1,n+1} b^{-s_{i,2}}_{n+1} a^{-s_{i,1}}_{n+1} \sigma_{i}^{-1} a_{i+1}\cdot a^{\alpha_{i+1,1}}_{n+1} b^{\alpha_{i+1,2}}_{n+1} C^{x_{i+1,1}}_{1,n+1}\cdots C^{x_{i+1,n}}_{n,n+1}\\
&= \sigma_{i}^{-1} a_{i+1} b^{-s_{i,2}}_{n+1} a^{\alpha_{i+1,1}-s_{i,1}}_{n+1} b^{\alpha_{i+1,2}}_{n+1} w,\\
\intertext{and}
\widehat{\phi}(a_{i}\sigma_{i}) &= a_{i} \sigma_{i} a^{\alpha_{i,1}}_{n+1} b^{\alpha_{i,2}}_{n+1} a^{s_{i,1}}_{n+1} b^{s_{i,2}}_{n+1} w', 
\end{align*}
where $w,w'$ are words in the $C_{j,n+1}$, $j=1,\ldots,n$.
Since $\widehat{\phi}(\sigma^{-1}_{i}a_{i+1})=\widehat{\phi}(a_{i}\sigma_{i})$ and $\sigma^{-1}_{i}a_{i+1}=a_{i}\sigma_{i}$ in $B_{t,s,1}(\mathbb{T})/\Gamma_{3}(P_{n+1}(\mathbb{T}))$, to be able to compare the coefficients of $a_{n+1}$ and $b_{n+1}$, we need to put $b^{-s_{i,2}}_{n+1} a^{\alpha_{i+1,1}-s_{i,1}}_{n+1} b^{\alpha_{i+1,2}}_{n+1}$ and $a^{\alpha_{i,1}}_{n+1} b^{\alpha_{i,2}}_{n+1} a^{s_{i,1}}_{n+1} b^{s_{i,2}}_{n+1}$ in canonical form. To do so, it suffices to conjugate $a^{\alpha_{i+1,1}-s_{i,1}}_{n+1}$ by $b^{-s_{i,2}}_{n+1}$, and $a^{s_{i,1}}_{n+1}$ by $b^{\alpha_{i,2}}_{n+1}$, which we do using~(\ref{rel.3}) of Proposition~\ref{prop:rel}. If $M=\mathbb{K}$, this may alter the sign of the exponent of $a_{n+1}$, but modulo $2$, this exponent remains the same. By comparing the coefficients of $a_{n+1}$ and $b_{n+1}$, it follows from relations~(\ref{rel.1}) and~(\ref{rel.3}) of Proposition~\ref{prop:rel}  and Proposition~\ref{prop:can.form} that for $i=t+1,\ldots,n-1$ and $k=1,2$:
\begin{align}
\alpha_{i+1,k}\equiv \alpha_{i,k} \bmod{2}\label{eq:alphabeta}\\
\beta_{i+1,k}\equiv \beta_{i,k} \bmod{2}.\label{eq:alphabeta1}
\end{align}
Applying induction for $i=t+1,\ldots,n$, to prove the result, it suffices to show that
$\alpha_{1,k}\equiv\alpha_{n,k}\equiv 0$ and  $\beta_{1,k}\equiv\beta_{n,k}\equiv 0$, for $k=1,2$.
We now analyse the image of relation~(\ref{igual:0}) of Proposition~\ref{prop:rel.igual} by $\widehat{\phi}$. Since we will be comparing coefficients modulo $2$, it will be convenient not to take into account the signs of certain exponents. Using relations~(\ref{rel.1})--(\ref{rel.a}) of Proposition~\ref{prop:rel}, for $i,j=1,t+1,\ldots, n$, we have:
\begin{align}
\widehat{\phi}(a_{i}a_{j})&= a_{i}a^{\alpha_{i,1}}_{n+1}b^{\alpha_{i,2}}_{n+1}a_{j}a^{\alpha_{j,1}}_{n+1}b^{\alpha_{j,2}}_{n+1}w
=a_{i}a_{j}a^{\alpha_{i,1}}_{n+1}b^{\alpha_{i,2}}_{n+1}C^{\alpha_{i,2}}_{j,n+1}C^{\alpha_{i,2}}_{j+1,n+1}
a^{\alpha_{j,1}}_{n+1}b^{\alpha_{j,2}}_{n+1}w\notag\\
&=a_{i}a_{j}a^{\alpha_{i,1}+\alpha_{j,1}}_{n+1}b^{\alpha_{j,2}+\alpha_{i,2}}_{n+1}C^{\alpha_{i,2}\alpha_{j,1}}_{1,n+1}C^{\alpha_{i,2}}_{j,n+1}
C^{\alpha_{i,2}}_{j+1,n+1}w,\label{eq:relaiaj}
\end{align}
where $w=\prod^{n}_{k=1}C^{x_{i,k}+x_{j,k}}_{k,n+1}$. In a similar manner, we obtain: 
\begin{equation}\label{eq:relajai}
\widehat{\phi}(a_{j}a_{i})=a_{j}a_{i}a^{\alpha_{i,1}+\alpha_{j,1}}_{n+1}b^{\alpha_{j,2}+\alpha_{i,2}}_{n+1}C^{\alpha_{j,2}\alpha_{i,1}}_{1,n+1}C^{\alpha_{j,2}}_{i,n+1}C^{\alpha_{j,2}}_{i+1,n+1}w.
\end{equation}
First let $i=1$ and $j=t+1$ in equations~(\ref{eq:relaiaj}) and~(\ref{eq:relajai}). Since $\widehat{\phi}(a_{i}a_{j})=\widehat{\phi}(a_{j}a_{i})$ and $a_{i}a_{j}= a_{j}a_{i}$ in $B_{t,s,1}(\mathbb{T})/\Gamma_{3}(P_{n+1}(\mathbb{T}))$ by relation~(\ref{rel.a}) of Proposition~\ref{prop:rel}, we obtain 
$C^{\alpha_{1,2}\alpha_{t+1,1}}_{1,n+1} C^{\alpha_{1,2}}_{t+1,n+1} C^{\alpha_{1,2}}_{t+2,n+1} =C^{\alpha_{t+1,2}\alpha_{1,1}}_{1,n+1} C^{\alpha_{t+1,2}}_{1,n+1} C^{\alpha_{t+1,2}}_{2,n+1}$. Comparing the coefficients of $C_{t+2,n+1}$ and using Proposition~\ref{prop:can.form}, we conclude that:
\begin{equation}\label{eq:alpha_{i,2}}
\alpha_{1,2}\equiv 0 \bmod{2}.
\end{equation} 
Now take $i=t+1$ and $j=n$ in equations~(\ref{eq:relaiaj}) and~(\ref{eq:relajai}). In a similar way, we obtain   
$C^{\alpha_{t+1,2}\alpha_{n,1}}_{1,n+1}C^{\alpha_{t+1,2}}_{n,n+1}=C^{\alpha_{n,2}\alpha_{t+1,1}}_{1,n+1}C^{\alpha_{n,2}}_{t+1,n+1}C^{\alpha_{n,2}}_{t+2,n+1}$. 
If $s>2$ (resp.\ $s=2$) then comparing the coefficients of $C_{n,n+1}$ (resp.\ of $C_{n-1,n+1}$) and using Proposition~\ref{prop:can.form}, we see that:
\begin{equation}\label{eq:alpha_{i,2}b}
\text{$\alpha_{t+1,2}\equiv 0 \bmod{2}$ (resp.\ $\alpha_{n,2}\equiv 0 \bmod{2}$)}.
\end{equation}  
We deduce from~(\ref{eq:alphabeta}),~(\ref{eq:alpha_{i,2}}) and~(\ref{eq:alpha_{i,2}b}) that for $i=1,t+1, \ldots,n$:
\begin{equation}\label{eq:alpha_{i,2}c}
\alpha_{i,2}\equiv 0 \bmod{2}.
\end{equation}




We now consider relation~(\ref{igual:1}) of Proposition~\ref{prop:rel.igual}. If $M=\mathbb{T}$, then arguing as above, for $i=1,t+1,\ldots,n$, we obtain:
\begin{equation}\label{eq:beta_i,1.toro}
\beta_{i,1} \equiv 0 \bmod{2}.
\end{equation}
If $M=\mathbb{K}$, for either $i=1, j=t+1,\ldots,n$ or $t+1\leq i<j\leq n$,
using relations~(\ref{rel.1})--(\ref{rel.3}) and~(\ref{rel.b}) of Proposition~\ref{prop:rel}, we have:
\begin{align}
\widehat{\phi}(b_{j}b_{i}b^{-1}_{j})&=b_{j}a^{\beta_{j,1}}_{n+1}b^{\beta_{j,2}}_{n+1}b_{i}a^{\beta_{i,1}-\beta_{j,1}}_{n+1}b^{\beta_{i,2}-\beta_{j,2}}_{n+1}C^{\beta_{j,1}(\beta_{i,2}-\beta_{j,2})}_{1,n+1}b^{-1}_{j}w\notag\\
&=b_{j}b_{i}a^{\beta_{j,1}}_{n+1}b^{\beta_{j,2}}_{n+1}C^{\beta_{j,1}+\beta_{j,2}}_{i,n+1}C^{\beta_{j,1}+\beta_{j,2}}_{i+1,n+1}a^{\beta_{i,1}-\beta_{j,1}}_{n+1}b^{\beta_{i,2}-\beta_{j,2}}_{n+1}C^{\beta_{j,1}(\beta_{i,2}-\beta_{j,2})}_{1,n+1}b^{-1}_{j}w\notag\\
& =b_{j}b_{i}a^{\beta_{i,1}}_{n+1}b^{\beta_{i,2}}_{n+1}C^{\beta_{j,2}(\beta_{i,1}-\beta_{j,1})}_{1,n+1}C^{\beta_{j,1}(\beta_{i,2}-\beta_{j,2})}_{1,n+1} C^{\beta_{j,1}+\beta_{j,2}}_{i,n+1} C^{\beta_{j,1}+\beta_{j,2}}_{i+1,n+1}b^{-1}_{j}w \notag\\
&=b_{j}b_{i}b^{-1}_{j}a^{\beta_{i,1}}_{n+1}b^{\beta_{i,2}}_{n+1}C^{\beta_{j,2}\beta_{i,1}+\beta_{i,2}\beta_{j,1}}_{1,n+1}C^{\beta_{j,1}+\beta_{j,2}}_{i,n+1}C^{\beta_{j,1}+\beta_{j,2}}_{i+1,n+1}C^{\beta_{i,1}+\beta_{i,2}}_{j,n+1}C^{\beta_{i,1}+\beta_{i,2}}_{j+1,n+1}w,\label{eq:bjbiK1}
\end{align}
where $w=\prod^{n}_{k=1}C^{y_{i,k}}_{k,n+1}$. Also, applying~(\ref{rel.1})--(\ref{rel.a}) of Proposition~\ref{prop:rel} and~(\ref{eq:alpha_{i,2}c}), we see that:
\begin{align}
\widehat{\phi}(a^{-1}_{j}b_{i}a_{j})&=a^{-\alpha_{j,1}}_{n+1}a^{-1}_{j}b_{i}a^{\beta_{i,1}}_{n+1}b^{\beta_{i,2}}_{n+1}a_{j}a^{\alpha_{j,1}}_{n+1}w=a^{-1}_{j}b_{i}a^{\beta_{i,1}-\alpha_{j,1}}_{n+1}C^{\alpha_{j,1}}_{i,n+1}C^{\alpha_{j,1}}_{i+1,n+1}b^{\beta_{i,2}}_{n+1}a_{j}a^{\alpha_{j,1}}_{n+1}w\notag\\
&=a^{-1}_{j}b_{i}a_{j}a^{\beta_{i,1}}_{n+1}b^{\beta_{i,2}}_{n+1}C^{\beta_{i,2}\alpha_{j,1}}_{1,n+1}C^{\beta_{i,2}}_{j,n+1}C^{\beta_{i,2}}_{j+1,n+1}C^{\alpha_{j,1}}_{i,n+1}C^{\alpha_{j,1}}_{i+1,n+1}w. \label{eq:bjbiK2}
\end{align}
Making use of relation~(\ref{igual:1}) of Proposition~\ref{prop:rel.igual} in $B_{t,s,1}(\mathbb{T})/\Gamma_{3}(P_{n+1}(\mathbb{T}))$, and comparing the coefficients of $C_{j,n+1}$ for the given values of $j$, 
in~(\ref{eq:bjbiK1}) and~(\ref{eq:bjbiK2}) using Proposition~\ref{prop:can.form}, it follows that $\beta_{i,1}\equiv 0 \bmod{2}$ for $i=1,t+1,\ldots,n-1$, and applying~(\ref{eq:alphabeta}), we also deduce the result for $i=n$. Therefore, for $i=1,t+1,\ldots,n$, we obtain:
\begin{equation}\label{eq:beta_{i,1}}
\beta_{i,1}\equiv 0 \bmod{2}.
\end{equation}
It follows from~(\ref{eq:alpha_{i,2}c}),~(\ref{eq:beta_i,1.toro}) and~(\ref{eq:beta_{i,1}}) that $\alpha_{i,2} \equiv \beta_{i,1}\equiv 0 \bmod{2}$ for $i=1,t+1,\ldots,n$, which proves half of part~(\ref{it:lg1}) of the statement. Before showing that the other congruences of part~(\ref{it:lg1}) hold, we first prove part~(\ref{it:lg2}). To do so, we start by studying the image of relations~(\ref{igual:3}) of Proposition~\ref{prop:rel.igual} by $\widehat{\phi}$. Using~(\ref{eq:alpha_{i,2}}), relations~(\ref{rel.1})--(\ref{rel.a}) and~(\ref{rel.4}) of Proposition~\ref{prop:rel}, and relation~(\ref{igual:2}) of Proposition~\ref{prop:rel.igual}, for $i=t+1,\ldots,n-1$, we have:
\begin{align}
\widehat{\phi}(a_{1}\sigma_{i})&=a_{1}a^{\alpha_{1,1}}_{n+1}C^{x_{1,1}}_{1,n+1}\cdots C^{x_{1,n}}_{n,n+1}\sigma_{i}a^{s_{i,1}}_{n+1}b^{s_{i,2}}_{n+1}C^{r_{i,1}}_{1,n+1}\cdots C^{r_{i,n}}_{n,n+1}\notag\\
&=a_{1}\sigma_{i}a^{\alpha_{1,1}+s_{i,1}}_{n+1}b^{s_{i,2}}_{n+1} w C_{i,n+1}^{x_{1,i+1}} C_{i+2,n+1}^{x_{1,i+1}},\label{eq:sia1a}
\intertext{where $w=\prod_{k=1}^{n} C_{k,n+1}^{x_{1,k}+r_{i,k}}$ and}
\widehat{\phi}(\sigma_{i}a_{1})&=\sigma_{i}a^{s_{i,1}}_{n+1}b^{s_{i,2}}_{n+1}C^{r_{i,1}}_{1,n+1}\cdots C^{r_{i,n}}_{n,n+1}a_{1}a^{\alpha_{1,1}}_{n+1}C^{x_{1,1}}_{1,n+1}\cdots C^{x_{1,n}}_{n,n+1}\notag\\
&=\sigma_{i}a_{1}a^{s_{i,1}}_{n+1}b^{s_{i,2}}_{n+1}C^{s_{i,2}}_{1,n+1}C^{s_{i,2}}_{2,n+1} a^{\alpha_{1,1}}_{n+1}C^{r_{i,1}+x_{1,1}}_{1,n+1}\cdots C^{r_{i,n}+x_{1,n}}_{n,n+1}\notag\\
&=\sigma_{i}a_{1}a^{s_{i,1}+\alpha_{1,1}}_{n+1}b^{s_{i,2}}_{n+1} w C_{1,n+1}^{s_{i,2}\alpha_{1,1}+s_{i,2}}C_{2,n+1}^{s_{i,2}}. \label{eq:sia1b}
\end{align}
Since $i\geq3$, using the fact that $a_{1}\sigma_{i}=\sigma_{i}a_{1}$ in ${B_{t,s,1}(M)}/{\Gamma_{3}(P_{n+1}(M))}$ by relation~(\ref{igual:3}) of Proposition~\ref{prop:rel.igual}, and comparing the coefficients of $C_{2,n+1}$
in~(\ref{eq:sia1a}) and~(\ref{eq:sia1b}) and making use of Proposition~\ref{prop:can.form}, for $i=t+1,\ldots,n-1$, it follows that:
\begin{equation}\label{eq:si2}
s_{i,2} \equiv 0 \bmod{2}. 
\end{equation}
In a similar manner, analysing the image by $\widehat{\phi}$ of the relation $b_{1}\sigma_{i}=b_{1}\sigma_{i}$ for $i=t+1,\ldots,n-1$, using~(\ref{eq:si2}), and comparing the coefficients of $C_{2,n+1}$, we see that $s_{i,1}\equiv 0 \bmod{2}$. 

Now suppose that $i=1,\ldots, t-1$. Analysing the image by $\widehat{\phi}$ of the relation $a_{n}\sigma_{i}=\sigma_{i}a_{n}$ (resp.\ $b_{n}\sigma_{i}=\sigma_{i}b_{n}$), we have:
\begin{align}
\widehat{\phi}(a_{n}\sigma_{i})&=a_{n}a^{\alpha_{n,1}}_{n+1}C^{x_{n,1}}_{1,n+1}\cdots C^{x_{n,n}}_{n,n+1}\sigma_{i}a^{s_{i,1}}_{n+1}b^{s_{i,2}}_{n+1}C^{r_{i,1}}_{1,n+1}\cdots C^{r_{i,n}}_{n,n+1}\notag\\
&=a_{n}\sigma_{i}a^{\alpha_{n,1}+s_{i,1}}_{n+1}b^{s_{i,2}}_{n+1} w C_{i,n+1}^{x_{n,i+1}} C_{i+2,n+1}^{x_{n,i+1}},\label{eq:siana}
\intertext{where $w=\prod_{k=1}^{n} C_{k,n+1}^{x_{n,k}+r_{i,k}}$ and}
\widehat{\phi}(\sigma_{i}a_{n})&=\sigma_{i}a^{s_{i,1}}_{n+1}b^{s_{i,2}}_{n+1}C^{r_{i,1}}_{1,n+1}\cdots C^{r_{i,n}}_{n,n+1}a_{n}a^{\alpha_{n,1}}_{n+1}C^{x_{n,1}}_{1,n+1}\cdots C^{x_{n,n}}_{n,n+1}\notag\\
&=\sigma_{i}a_{n}a^{s_{i,1}}_{n+1}b^{s_{i,2}}_{n+1}C^{s_{i,2}}_{n,n+1}a^{\alpha_{n,1}}_{n+1}C^{r_{i,1}+x_{n,1}}_{1,n+1}\cdots C^{r_{i,n}+x_{n,n}}_{n,n+1}\notag\\
&=\sigma_{i}a_{n}a^{s_{i,1}+\alpha_{n,1}}_{n+1}b^{s_{i,2}}_{n+1} w C_{1,n+1}^{s_{i,2}\alpha_{n,1}}C_{n,n+1}^{s_{i,2}}. \label{eq:sianb}
\end{align}
Since $i\leq t-1$ and $t+1<n$, we have $i+2<n$, and comparing the coefficient of $C_{n,n+1}$ in~(\ref{eq:siana}) and~(\ref{eq:sianb}), we conclude that $s_{i,2} \equiv 0 \bmod{2}$. In a similar manner, we obtain $s_{i,1}\equiv 0 \bmod{2}$ using the relation $b_{n}\sigma_{i}=\sigma_{i}b_{n}$. In summary,  for $i=1,\ldots, t-1$, we have: 
\begin{equation}\label{eq:si1i2}
s_{i,1}\equiv s_{i,2} \equiv 0 \bmod{2},
\end{equation}
which proves part~(\ref{it:lg2}) of the statement. 


We now return to the proof of the outstanding cases of part~(\ref{it:lg1}), as well as that of part~(\ref{it:lg3}). We first study relation~(\ref{igual:6}) of Proposition~\ref{prop:rel.igual}. 
Let $i\in \{1,t+1\}$. Setting $w=\displaystyle \prod_{\substack{k=1\\ k\neq i+1}}^{n}C^{x_{i,k}+y_{i,k}}_{k,n+1}$, using~(\ref{rel.1})--(\ref{rel.a}) and~(\ref{rel.4}) of Proposition~\ref{prop:rel}, relation~(\ref{igual:2}) of Proposition~\ref{prop:rel.igual} in ${B_{t,s,1}(M)}/{\Gamma_{3}(P_{n+1}(M))}$, and equations~(\ref{eq:alpha_{i,2}c}),~(\ref{eq:beta_i,1.toro}),~(\ref{eq:beta_{i,1}}) and~(\ref{eq:si1i2}),
we have: 
\begin{align*}
\widehat{\phi}(\sigma_{i}a_{i}\sigma_{i}^{-1}b_{i})&=\sigma_{i}a_{i}a_{n+1}^{\alpha_{i,1}} C_{i+1,n+1}^{x_{i,i+1}} \sigma^{-1}_{i} b_{i} b_{n+1}^{\beta_{i,2}} C_{i+1,n+1}^{y_{i,i+1}}w\notag\\
&=\sigma_{i}a_{i} \sigma^{-1}_{i} a_{n+1}^{\alpha_{i,1}} C_{i,n+1}^{x_{i,i+1}} C_{i+1,n+1}^{x_{i,i+1}} C_{i+2,n+1}^{x_{i,i+1}} b_{i} b_{n+1}^{\beta_{i,2}} C_{i+1,n+1}^{y_{i,i+1}}w\notag\\
&=\sigma_{i}a_{i} \sigma^{-1}_{i} b_{i}  a_{n+1}^{\alpha_{i,1}} b_{n+1}^{\beta_{i,2}} C_{i,n+1}^{x_{i,i+1}+\alpha_{i,1}} C_{i+1,n+1}^{x_{i,i+1}+\alpha_{i,1}+y_{i,i+1}} C_{i+2,n+1}^{x_{i,i+1}}  w
\intertext{and}
\widehat{\phi}(b_{i}\sigma_{i}a_{i}\sigma_{i})&=b_{i}b^{\beta_{i,2}}_{n+1}C_{i+1,n+1}^{y_{i,i+1}}\sigma_{i} C_{i+1,n+1}^{r_{i,i+1}} a_{i} a_{n+1}^{\alpha_{i,1}} C_{i+1,n+1}^{x_{i,i+1}} \sigma_{i} C_{i+1,n+1}^{r_{i,i+1}}w\notag\\
&=b_{i} \sigma_{i} b_{n+1}^{\beta_{i,2}} C_{i,n+1}^{y_{i,i+1}} C_{i+1,n+1}^{y_{i,i+1}+r_{i,i+1}} C_{i+2,n+1}^{y_{i,i+1}} 
a_{i} a_{n+1}^{\alpha_{i,1}} C_{i+1,n+1}^{x_{i,i+1}} \sigma_{i} C_{i+1,n+1}^{r_{i,i+1}}w\notag\\
&=b_{i} \sigma_{i} a_{i} b_{n+1}^{\beta_{i,2}} C_{i,n+1}^{y_{i,i+1}+\beta_{i,2}} C_{i+1,n+1}^{y_{i,i+1}+r_{i,i+1}+\beta_{i,2}} C_{i+2,n+1}^{y_{i,i+1}} a_{n+1}^{\alpha_{i,1}} C_{i+1,n+1}^{x_{i,i+1}} \sigma_{i} C_{i+1,n+1}^{r_{i,i+1}}w\notag\\
&=b_{i} \sigma_{i} a_{i} a_{n+1}^{\alpha_{i,1}} b_{n+1}^{\beta_{i,2}} C_{1,n+1}^{\alpha_{i,1} \beta_{i,2}} C_{i,n+1}^{y_{i,i+1}+\beta_{i,2}} C_{i+1,n+1}^{y_{i,i+1}+r_{i,i+1}+\beta_{i,2}+x_{i,i+1}} C_{i+2,n+1}^{y_{i,i+1}} \sigma_{i} C_{i+1,n+1}^{r_{i,i+1}}w\notag\\
&=b_{i} \sigma_{i} a_{i} \sigma_{i} a_{n+1}^{\alpha_{i,1}} b_{n+1}^{\beta_{i,2}} C_{1,n+1}^{\alpha_{i,1} \beta_{i,2}} 
C_{i,n+1}^{r_{i,i+1}+x_{i,i+1}} 
C_{i+1,n+1}^{y_{i,i+1}+\beta_{i,2}+x_{i,i+1}} 
C_{i+2,n+1}^{r_{i,i+1}+\beta_{i,2}+x_{i,i+1}} w.
\end{align*}
Using the equalities $\sigma_{i}a_{i}\sigma_{i}^{-1}b_{i}=b_{i}\sigma_{i}a_{i}\sigma_{i}$ in ${B_{t,s,1}(M)}/{\Gamma_{3}(P_{n+1}(M))}$ by relation~(\ref{igual:6}) of Proposition~\ref{prop:rel.igual} and $\widehat{\phi}(\sigma_{i}a_{i}\sigma_{i}^{-1}b_{i})= \widehat{\phi}(b_{i}\sigma_{i}a_{i}\sigma_{i})$, and applying Proposition~\ref{prop:can.form}, we see that:
\begin{equation}\label{eq:ccmod2}
C_{i,n+1}^{x_{i,i+1}+\alpha_{i,1}} C_{i+1,n+1}^{x_{i,i+1}+\alpha_{i,1}+y_{i,i+1}} C_{i+2,n+1}^{x_{i,i+1}}= C_{1,n+1}^{\alpha_{i,1} \beta_{i,2}} 
C_{i,n+1}^{r_{i,i+1}+x_{i,i+1}} C_{i+1,n+1}^{y_{i,i+1}+\beta_{i,2}+x_{i,i+1}} C_{i+2,n+1}^{r_{i,i+1}+\beta_{i,2}+x_{i,i+1}}.
\end{equation}
Comparing certain coefficients of~\reqref{ccmod2} modulo $2$, we obtain the following congruences:
\begin{enumerate}[(i)]
\item\label{it:cong2i} for the coefficient of $C_{i+1,n+1}$, $\alpha_{i,1} \equiv \beta_{i,2} \bmod{2}$.
\item\label{it:cong2ii} if $i=1$ (resp.\ $i=t+1$), for the coefficient of $C_{3,n+1}$ (resp.\ of $C_{t+1,n+1}$), we have $r_{i,i+1} \equiv \beta_{i,2} \bmod{2}$, where we use~(\ref{it:cong2i}) in the case $i=t+1$.
\item for the coefficient of $C_{1,n+1}$, $\alpha_{i,1}^{2} \equiv 0 \bmod{2}$ using~(\ref{it:cong2i}) and~(\ref{it:cong2ii}).
\end{enumerate}
It follows that $r_{i,i+1} \equiv \alpha_{i,1}\equiv \beta_{i,2}\equiv 0 \bmod{2}$ for $i\in \{ 1,t+1\}$. We thus obtain part~(\ref{it:lg3}), and applying~(\ref{eq:alphabeta}),~(\ref{eq:alphabeta1}) and induction on $t+1\leq i\leq n$, the remaining congruences of part~(\ref{it:lg1}) follow. 
\end{proof}

Part~(\ref{it:rij1}) of the following lemma is the analogue in our setting of \relem{phiartin}(\ref{it:phiartin0}).

\begin{lem}\label{lem:rij}
Let $M=\mathbb T$ or $\mathbb K$. Using the notation of~(\ref{eq:phi1}), we have: 
\begin{enumerate}[(a)] 
\item\label{it:rij1} if $t\geq 4$ then $r_{j,k}\equiv 0\bmod{2}$ for all $1\leq j\leq t-1$ and $k=2,\ldots, j-1,j+3,\ldots, t$. 

\item\label{it:rij3} if $t\geq 3$ then $r_{i+1,i+2}\equiv r_{i,i+1}\equiv r_{1,2}\equiv 0 \bmod{2}$ if $1\leq i \leq t-2$. 
\end{enumerate}
\end{lem}

\begin{proof}
Let $M=\mathbb T$ or $\mathbb K$. 
\begin{enumerate}[(a)]
\item Let $t\geq 4$. Let $1\leq i,j\leq t-1$ be such that $\abs{i-j}\geq 2$. Applying relation~(\ref{rel.3}) of \repr{rel} to~(\ref{eq:commba1}) in the case $m=1$, we have:
\begin{equation*}
C^{\gamma}_{1,n+1}=C_{i,n+1}^{r_{j,i+1}} C_{i+1,n+1}^{-2r_{j,i+1}} C_{i+2,n+1}^{r_{j,i+1}}
C_{j,n+1}^{-r_{i,j+1}} C_{j+1,n+1}^{2r_{i,j+1}} C_{j+2,n+1}^{-r_{i,j+1}},
\end{equation*}
which is in canonical form (possibly up to permutation of some of the factors). Comparing the coefficients of $C_{i+1,n+1}$ (resp.\ $C_{j+1,n+1}$) if $i<j$ (resp.\ $i>j$) and using \repr{can.form}, we see that $r_{j,i+1}\equiv 0\bmod{2}$ (resp.\ $r_{i,j+1}\equiv 0\bmod{2}$). So for all $1\leq j\leq t-1$, $r_{j,k}\equiv 0\bmod{2}$ for all $k=2,\ldots,j-1$ (resp.\ for all $k=j+3,\ldots,t$) as required.


To prove part~(\ref{it:rij3}), let $t\geq 3$, and let $1\leq i \leq t-2$. Then comparing the coefficients of $C_{i+1,n+1}$ in ~\req{commba2} and using relation~(\ref{rel.3}) of \repr{rel} and the lift of relation~(\ref{it:full2}) of Theorem~\ref{th:total}, we see that $\rho \equiv 0 \bmod{2}$ by \repr{can.form}
and it follows that $r_{i+1,i+2}\equiv r_{i,i+1} \bmod{2}$. Part~(\ref{it:rij3}) is then a consequence of Lemma~\ref{lem:lemgen}(\ref{it:lg3}) and induction on $i$.\qedhere
\end{enumerate}
\end{proof}

We end this paper by proving Theorem~\ref{th:split3}.

\begin{proof}[Proof of Theorem~\ref{th:split3}] 
Let $M$ be the $2$-torus or the Klein bottle, and let $t,s\geq 2$. Suppose on the contrary that the projection $p_{\ast}\colon\thinspace B_{t,s,1}(M)\longrightarrow B_{t,s}(M)$ admits a section. As we showed earlier in this section, we may make use of the framework of Section~\ref{sec:contas}, and so we use the notation defined there. Taking $m=1$ in~\reqref{final} and applying Proposition~\ref{prop:can.form}, we see that:
\begin{equation}\label{eq:coeffcaseb}
\gamma+1\equiv\alpha + \delta \bmod{2}
\end{equation}
To obtain a contradiction to~\reqref{coeffcaseb}, it suffices to prove that $\alpha,\gamma$ and $\delta$ are even. Applying Lemma~\ref{lem:rij}(\ref{it:rij3}) to~\reqref{defalpha}, we see that $\alpha$ is even. By~\reqref{defdelta}, we have $\delta\equiv \beta_{1,2}+\alpha_{1,1} \bmod{2}$, and from~\reqref{defgammai} and~\reqref{defgamma}, $\gamma= -\sum_{i=t+1}^{n} \alpha_{i,1}$ (resp.\ $\gamma= \sum_{i=t+1}^{n} (\alpha_{i,2}-\alpha_{i,1})$) if $M=\mathbb{T}$ (resp.\ if $M=\mathbb{K}$). It follows from \relem{lemgen}(\ref{it:lg1}) that $\delta$ and $\gamma$ are even too. This contradicts~\reqref{coeffcaseb}, which proves Theorem~\ref{th:split3}. 
\end{proof}

\end{document}